\DeclareMathAlphabet{\mathpzc}{OT1}{pzc}{m}{it}
\renewcommand{\subsection}[1]{\vspace{.18in}
\par\noindent\addtocounter{subsection}{1}
\setcounter{equation}{0}{\bf\thesubsection.\hspace{5pt}#1}}
\theoremstyle{definition}
\newtheorem{Rem}[subsection]{Remark}
\theoremstyle{plain}
\newtheorem{Prop}[subsection]{Proposition}
\newtheorem{Thm}[subsection]{Theorem}
\newtheorem{Not}[subsection]{Notation}
\newtheorem{Lem}[subsection]{Lemma}
\newtheorem{Coro}[subsection]{Corollary}
\numberwithin{equation}{subsection}
\newcommand{\afVn}{\sV_\vtg(n)}
\newcommand{\afmsD}{{\mathscr D}^\vtg}
\newcommand{\bssg}{\boldsymbol{\sg}}
\newcommand{\bse}{\boldsymbol{e}}
\newcommand{\Hall}{{{\mathfrak H}_\vtg(n)}}
\newcommand{\bfb}{{\mathbf{b}}}
\newcommand{\bfc}{{\mathbf{c}}}
\newcommand{\bfd}{{\mathbf{d}}}
\newcommand{\bfj}{{\mathbf{j}}}
\newcommand{\bfl}{{\mathbf{0}}}
\def\fS{{\frak S}}
\def\fC{{\frak C}}
\def\fB{{\frak B}}
\def\fM{{\frak M}}
\def\fG{{\frak G}}
\def\fP{{\frak P}}
\newcommand{\fkg}{{\frak g}}
\newcommand{\ms}{\mathscr}
\newcommand{\msD}{\mathscr D}
\newcommand{\msKnh}{\mathscr K_\vtg(n)_h}
\newcommand{\msKnhq}{\mathscr K'_\vtg(n)_h}
\newcommand{\afKn}{\mathsf K_\vtg(n)_\sZ}
\newcommand{\afKnmbz}{\mathsf K_\vtg(n)_\mbz}
\newcommand{\bfKn}{\boldsymbol{{\mathsf K}}(n)}
\newcommand{\hbfKn}{\boldsymbol{\h{\mathsf K}}(n)}
\newcommand{\afhKnk}{\h{{\mathsf K}}_\vtg(n)_\field}
\newcommand{\afKnk}{\mathsf K_\vtg(n)_\field}
\newcommand{\field}{\mathpzc k}
\newcommand{\mpX}{\mathpzc X}
\newcommand{\mpY}{\mathpzc Y}
\newcommand{\mpZ}{\mathpzc Z}
\def\sK{{\mathcal K}}
\def\sL{{\mathcal L}}
\def\sU{{\mathcal U}}
\def\sV{{\mathcal V}}
\def\sX{{\mathcal X}}
\def\sZ{{\mathcal Z}}
\newcommand{\afmbnnh}{\mathbb N_{\vtg,p^h}^{n}}
\newcommand{\afmbznh}{\mathbb Z_{\vtg,p^h}^{n}}
\newcommand{\mbn}{\mathbb N}
\newcommand{\mbq}{\mathbb Q}
\newcommand{\mbz}{\mathbb Z}
\newcommand{\mbf}{\mathbb F}
\newcommand{\ttt}{\mathtt{t}}
\newcommand{\ttu}{\mathtt{u}}
\newcommand{\tts}{\mathtt{s}}
\newcommand{\End}{\operatorname{End}}
\newcommand{\spann}{\operatorname{span}}
\newcommand{\diag}{\operatorname{diag}}
\def\ro{\text{\rm ro}}
\def\co{\text{\rm co}}
\newcommand{\ul}{\underline}
\newcommand{\la}{{\lambda}}
\newcommand{\La}{\Lambda}
\newcommand{\ga}{{\gamma}}
\newcommand{\Th}{\Theta}
\newcommand{\dt}{\delta}
\newcommand{\up}{v}
\newcommand{\vi}{\varphi}
\newcommand{\vih}{\varphi_h}
\newcommand{\psih}{\psi_h}
\newcommand{\al}{\alpha}
\newcommand{\bt}{\beta}
\newcommand{\sg}{\sigma}
\def\th{\theta} 
\newcommand{\ol}{\overline}
\newcommand{\lan}{\langle}
\newcommand{\ran}{\rangle}
\newcommand{\lebr}{\left(}
\newcommand{\ribr}{\right)}
\newcommand{\bblr}{\big(}
\newcommand{\bbrr}{\big)}
\def\br#1{\{ #1 \}}
\def\bpa#1#2{\left({#1\atop #2}\right)}
\def\ggp#1#2{\left[\kern-3.2pt\left[{#1\atop #2}\right]\kern-3.2pt\right]}
\def\leq{\leqslant}\def\geq{\geqslant}
\def\le{\leqslant}\def\ge{\geqslant}
\newcommand{\bop}{\bigoplus}
\newcommand{\ot}{\otimes}
\newcommand{\bin}{\bigcup}
\newcommand{\han}{\subseteq}
\newcommand{\h}{\widehat}
\newcommand{\ti}{\widetilde}
\newcommand{\tiThn}{\ti\Th(n)}
\newcommand{\aftiThnh}{\ti\Th_\vtg(n)_h}
\newcommand{\aftiThnhq}{\ti\Th'_\vtg(n)_h}
\newcommand{\Thnpm}{\Th^\pm(n)}
\newcommand{\afThnpmh}{\Th_\vtg^\pm(n)_h}
\newcommand{\afThnph}{\Th_\vtg^+(n)_h}
\newcommand{\afThnmh}{\Th_\vtg^-(n)_h}
\newcommand{\eap}{{\tt e}^{(A^+)}}
\newcommand{\Eap}{E^{(A^+)}}
\newcommand{\faf}{{\tt f}^{(A^-)}}
\newcommand{\Faf}{F^{(A^-)}}
\newcommand{\Ea}{E^{(A)}}
\newcommand{\tA}{{}^t\!A}
\newcommand{\lra}{\longrightarrow}
\newcommand{\ra}{\rightarrow}
\newcommand{\tra}{\twoheadrightarrow}
\newcommand{\xrk}{\xi_{r,\field}}
\newcommand{\mnmod}{\!\!\!\mod\!}
\newcommand{\snkh}{\tts_\vtg(n)_h}
\newcommand{\snkhr}{\tts_\vtg(n,r)_h}
\newcommand{\Unkhr}{\ttu_\vtg(n,r)_h}
\newcommand{\Un}{\sU_\mbz(\afgl)}
\newcommand{\Unk}{\sU_\field(\afgl)}
\newcommand{\Unkh}{{\ttu}_\vtg(n)_h}
\newcommand{\Unkhp}{{\ttu}^+_\vtg(n)_h}
\newcommand{\Unkhm}{{\ttu}^-_\vtg(n)_h}
\newcommand{\Unkhz}{{\ttu}_\vtg^0(n)_h}
\newcommand{\Unko}{{\ttu}_\vtg(n)_1}
\newcommand{\Unkt}{{\ttu}_\vtg(n)_2}
\newcommand{\vtg}{{\!\vartriangle\!}}
\newcommand{\afuglq}{\sU(\widehat{\frak{gl}}_n)}
\newcommand{\afuglk}{\sU_\field(\widehat{\frak{gl}}_n)}
\newcommand{\afuglqz}{\sU^0(\widehat{\frak{gl}}_n)}
\newcommand{\afuglqp}{\sU^+(\widehat{\frak{gl}}_n)}
\newcommand{\afuglqm}{\sU^-(\widehat{\frak{gl}}_n)}
\newcommand{\afuglz}{\sU_\mbz(\widehat{\frak{gl}}_n)}
\newcommand{\afuglzp}{\sU_\mbz^+(\widehat{\frak{gl}}_n)}
\newcommand{\afuglzm}{\sU_\mbz^-(\widehat{\frak{gl}}_n)}
\newcommand{\afuglzz}{\sU_\mbz^0(\widehat{\frak{gl}}_n)}
\newcommand{\afSrmbz}{{\mathcal S}_{\vtg}(n,r)_{\mathbb Z}}
\newcommand{\afSrmbq}{{\mathcal S}_{\vtg}(n,r)_{\mathbb Q}}
\newcommand{\afSrk}{{\mathcal S}_{\vtg}(n,r)_{\field}}
\newcommand{\affSr}{{\fS_{\vtg,r}}}
\newcommand{\afgl}{\widehat{\frak{gl}}_n}
\newcommand{\afbse}{\boldsymbol e^\vartriangle}
\newcommand{\afE}{E^\vartriangle}
\newcommand{\afmbnn}{\mathbb N_\vtg^{n}}
\newcommand{\afmbzn}{\mathbb Z_\vtg^{n}}
\newcommand{\afThn}{\Theta_\vtg(n)}
\newcommand{\aftiThn}{\widetilde\Theta_\vtg(n)}
\newcommand{\afThnpm}{\Theta_\vtg^\pm(n)}
\newcommand{\afThnp}{\Theta_\vtg^+(n)}
\newcommand{\afThnm}{\Theta_\vtg^-(n)}
\newcommand{\afThnr}{\Theta_\vtg(n,r)}
\newcommand{\afMnq}{M_{\vtg,n}(\mathbb Q)}
\newcommand{\afMnz}{M_{\vtg,n}(\mathbb Z)}
\newcommand{\afLa}{\Lambda_\vtg}
\newcommand{\afLanr}{\Lambda_\vtg(n,r)}
\newcommand{\tri}{\triangle(n)}
\newcommand{\Uz}{\ti\sU_\mbz(\afgl)}
\newcommand{\msE}{\mathscr E}
\newcommand{\msX}{\mathscr X}
\newcommand{\msEap}{\mathscr E^{(A^+)}}
\newcommand{\msEa}{\mathscr E^{(A)}}
\newcommand{\msFaf}{\mathscr F^{(A^-)}}
\newcommand{\msFa}{\mathscr F^{(A)}}
\begin{document}
\title{On the hyperalgebra of the loop algebra $\afgl$}

\author{Qiang Fu}
\address{Department of Mathematics, Tongji University, Shanghai, 200092, China.}
\email{q.fu@hotmail.com, q.fu@tongji.edu.cn}


\thanks{Supported by the National Natural Science Foundation
of China}

\begin{abstract}
Let $\Uz$ be the Garland integral form of $\afuglq$ introduced by Garland \cite{Ga}, where $\afuglq$ is the universal enveloping algebra of $\afgl$.
Using Ringel--Hall algebras, a certain integral form $\afuglz$ of  $\afuglq$ was constructed in \cite{Fu13}.
We prove that the Garland integral form $\Uz$ coincides with $\afuglz$.
Let $\field$ be a commutative ring with unity and let $\afuglk=\afuglz\ot\field$. For $h\geq 1$, we use Ringel--Hall algebras to construct a certain subalgebra, denoted by $\Unkh$, of $\afuglk$. The algebra $\Unkh$ is the affine analogue of $\ttu(\frak{gl}_n)_h$, where $\ttu(\frak{gl}_n)_h$ is a certain subalgebra of the hyperalgebra associated with $\frak{gl}_n$ introduced by Humhpreys \cite{Hum}.
The algebra $\ttu(\frak{gl}_n)_h$ plays an important role in the modular representation theory of $\frak{gl}_n$. In this paper we give a realization of $\Unkh$ using affine Schur algebras.
\end{abstract}

 \sloppy \maketitle
\section{Introduction}
The quantum groups are usually defined by presentations with generators and relations. In  a famous paper  \cite{Ri90}, Ringel discovered the Hall algebra realization  of the positive part of the quantum groups associated with finite dimensional semisimple complex Lie algebras.
Using a geometric setting of $q$-Schur algebras,
Beilinson--Lusztig--MacPherson (BLM) gave a geometric interpretation for the entitle quantum $\frak{gl}_n$ in \cite{BLM}. The geometric interpretation  is in terms of flags in a vector space over a finite field. They first use $q$-Schur algebras to construct a certain algebra $\bfKn$ without unity, and realize quantum $\frak{gl}_n$ as a subalgebra of the completion algebra $\hbfKn$.
The remarkable BLM's work has many important applications. Based on BLM's work, Du \cite{Du95} established partially the quantum Schur--Weyl duality at the integral level.
The algebra $\bfKn$ was later generalized by Lusztig to other types, which is called modified quantum groups  (cf. \cite[Ch.~23]{Lubk}). Note that the algebra $\bfKn$ is isomorphic to the modified quantum group of $\frak{gl}_n$ (see \cite[Th.~6.3]{DF09}). 
The categorification of the modified quantum group of $\frak{sl}_n$ was given in \cite{Lauda,KL}.



Like the $q$-Schur algebra, the affine $q$-Schur algebra has several equivalent definitions (see \cite{GV,Gr99,Lu99}). Using affine $q$-Schur algebras,
the quantum  loop algebra of $\mathfrak {gl}_n$ was realized in \cite{DF13} (see also \cite[5.5(2)]{DF10} and \cite[Ch.~6]{DDF}).  These results have important applications to the investigation of the integral affine quantum Schur--Weyl reciprocity (cf. \cite{Fu13,Fu12,DF14}).

Let $G$ be a simply connected semisimple algebraic group with the root system $\Phi$ over an algebraically closed field $\mbf$ of characteristic $p>0$.  Let
$\fkg$ be the complex semisimple Lie algebra corresponding to $\Phi$.
B. Kostant \cite{Ko} introduced a $\mbz$-form $\sU_\mbz(\fkg)$ of the universal enveloping algebra of $\fkg$.
The algebra $\sU_\mbf(\fkg)=\sU_\mbz(\fkg)\ot\mbf$ is isomorphic to the hyperalgebra of $G$.  For each $h\geq 1$, Humphreys \cite{Hum} constructed a certain subalgebra $\ttu(\fkg)_h$ of $\sU_\mbf(\fkg)$. The algebra $\ttu(\fkg)_1$ was originally introduced  by Curtis\cite{Curtis}, as the restricted universal enveloping algebra of the Lie algebra of $G$. The algebra $\ttu(\fkg)_1$ plays an important role in the theory of modular representation of Lie algebras. The behavior of finite dimensional $\ttu(\fkg)_h$-modules is entirely analogous to that of
$\ttu(\fkg)_1$-modules.
A theory of the quantum version of $\ttu(\fkg)_1$ was developed by Lusztig \cite{Lu90}.

Let $\sU_\mbz(\afgl)$ be the $\mbz$-form of $\sU(\afgl)$ introduced in \cite{Fu13}, where $\sU(\afgl)$ is the universal enveloping algebra of $\afgl$.
In \ref{basis2 for afuglz}, we will construct several bases of $\sU_\mbz(\afgl)$. The basis $\fM$ of $\sU_\mbz(\afgl)$ given in \ref{basis2 for afuglz}(1) is the affine analogue of the PBW basis of $\sU_\mbz(\frak{gl}_n)$ constructed by Kostant \cite{Ko}.


Given a complex semisimple Lie algebra $\frak{g}$, Garland \cite{Ga} introduced an integral form of $\sU(\h{\frak{g}})$, where $\sU(\h{\frak{g}})$ is the universal enveloping algebra of the loop algebra $\h{\frak{g}}$.
Let $\Uz$ be the Garland integral form of $\sU(\afgl)$.
Certain mysterious elements $\Psi_{i,l}(\La_k)\in\afuglq$ corresponding to imaginary roots were constructed by Garland \cite[(5.6)]{Ga}. These elements
are essential for the construction of the integral basis for $\Uz$ given in \cite[Th. 5.8]{Ga}. In \ref{key for integral form}, the elements $\Psi_{i,l}(\La_k)$ will be interpreted in terms of Ringel--Hall algebras. Using \ref{key for integral form}, we will prove in \ref{identification} that
$\afuglz$ coincides with $\Uz$.

Let $\field$ be a commutative ring with unity.
Assume $p=\text{char}\field>0$.
We call $\sU_\field(\afgl):=\sU_\mbz(\afgl)\ot\field$ the hyperalgebra of $\afgl$.
For $h\geq 1$ we will use Ringel--Hall algebras to construct
a certain subalgebra $\ttu_\vtg(n)_h$ of $\sU_\field(\afgl)$, which is the affine analogue of $\ttu(\frak{gl}_n)_h$. The quantum version of $\ttu(\frak{gl}_n)_1$ was realized by Beilinson--Lusztig--MacPherson \cite{BLM} using $q$-Schur algebras. We will use affine Schur algebras to give a realization of $\Unkh$ in \ref{vi}. More precisely, we will use affine Schur algebras to construct a certain algebra $\msKnhq$, which is the affine analogue of the algebra $\ms K'$ constructed by Beilinson--Lusztig--MacPherson in \cite[\S6]{BLM}. We will prove in \ref{vi} that $\Unkh$ is isomorphic to $\msKnhq$.

Infinitesimal Schur algebras are certain important subalgebras of Schur algebras introduced by Doty--Nakano--Peters \cite{DNP96}. A theory of quantum version of infinitesimal Schur algebras were investigated by Cox \cite{Cox,Cox00}.
In \cite{DFW05,Fu07}, the little $q$-Schur algebras were introduced
as subalgebras of $q$-Schur algebras and its algebraic structure was
investigated. The relation between infinitesimal and little $q$-Schur algebras was investigated in \cite{Fu05}.
We will investigate the affine version of infinitesimal and little Schur algebras in \S7.


\begin{Not}\label{Notaion} \rm
For a positive integer $n$, let
$\afMnq$ (resp., $\afMnz$) be the set of all matrices
$A=(a_{i,j})_{i,j\in\mbz}$ with $a_{i,j}\in\mbq$ (resp., $a_{i,j}\in\mbz$) such that
\begin{itemize}
\item[(a)]$a_{i,j}=a_{i+n,j+n}$ for $i,j\in\mbz$; \item[(b)] for
every $i\in\mbz$, both sets $\{j\in\mbz\mid a_{i,j}\not=0\}$ and
$\{j\in\mbz\mid a_{j,i}\not=0\}$ are finite.
\end{itemize}
Let $\aftiThn=\{A\in\afMnq\mid a_{i,j}\in\mbn,\,\forall i\not=j,\,
a_{i,i}\in\mbz,\,\forall i\}$.
Let
$\afThn=\{A\in\afMnz\mid a_{i,j}\in\mbn,\,\forall i,j\}$.

Let $\afmbzn=\{(\la_i)_{i\in\mbz}\mid
\la_i\in\mbz,\,\la_i=\la_{i-n}\ \text{for}\ i\in\mbz\}\text{ \,\,
and \,\,} \afmbnn=\{(\la_i)_{i\in\mbz}\in \afmbzn\mid \la_i\ge0\text{ for  }i\in\mbz\}.$
There is a natural order relation $\leq$ on $\afmbzn$ defined by
\begin{equation}\label{order on afmbzn}
\la\leq\mu  \iff\la_i\leq \mu_i\text{ for all $i$}.
\end{equation}
We say that $\la<\mu$ if $\la\leq\mu$ and $\la\not=\mu$.

Let $\sZ=\mbz[\up,\up^{-1}]$, where $\up$ is an indeterminate. Specializing $\up$ to $1$, $\mbq$ and
$\mbz$ will be viewed as $\sZ$-modules.
Let $\field$ be a commutative ring with unity.
Assume $p=\text{char}\field>0$.

\end{Not}

\section{The BLM realization of $\sU(\afgl)$}
Let $\afuglq$ be the universal enveloping algebra of the loop
algebra $\afgl$, where  $\afgl:={\frak{gl}_n}(\mbq)\ot\mbq[\ttt,\ttt^{-1}]$.
For $i,j\in\mbz$, let $\afE_{i,j}\in\afMnq$ be the matrix
$(e^{i,j}_{k,l})_{k,l\in\mbz}$ defined by
\begin{equation*}e_{k,l}^{i,j}=
\begin{cases}1&\text{if $k=i+sn,l=j+sn$ for some $s\in\mbz$;}\\
0&\text{otherwise}.\end{cases}
\end{equation*}
Clearly, the map
$$\afMnq\lra\afgl ,\,\,\,\afE_{i,j+ln}\longmapsto E_{i,j}\ot \ttt^l, \,1\le i,j\le n,l\in\mbz $$
is a Lie algebra isomorphism. We will identify
$\afgl$ with $\afMnq$ in the sequel.

The positive part of $\afuglq$ can be interpreted in terms of Ringel--Hall algebras, which we now describe.
Let $\tri$ ($n\geq 2$) be
the cyclic quiver
with vertex set $I=\mbz/n\mbz=\{1,2,\ldots,n\}$ and arrow set
$\{i\to i+1\mid i\in I\}$. Let $\mathbb F$ be a field. For $i\in I$, let $S_i$
be the corresponding irreducible representation of $\tri$ over $\mathbb F$.
Let
$$\afThnp=\{A\in\afThn\mid a_{i,j}=0\text{ for }i\geq j\}.$$
For $A\in\afThnp$, let
$$M(A)=M_{\mathbb F}(A)=\bop_{1\leq i\leq n\atop i<j,\,j\in\mbz}a_{i,j}M^{i,j},$$
where
$M^{i,j}$ is the unique indecomposable representation for $\tri$ of length $j-i$ with top $S_i$.

By \cite{Ri93}, for $A,B,C\in\afThnp$,
let $\vi^{C}_{A,B}\in\mbz[\up^2]$ be the Hall polynomials such
that, for any finite field ${\mathbb F}_q$,
$\vi^{C}_{A,B}|_{\up^2=q}$ is equal to the number of submodules $N$ of
$M_{{\mathbb F}_q}(C)$ satisfying $N\cong M_{{\mathbb F}_q}(B)$ and $M_{{\mathbb F}_q}(C)/N\cong M_{{\mathbb F}_q}(A)$. For $\la,\mu\in\afmbzn$, we set $\lan\la,\mu\ran=\sum_{1\leq i\leq n}\la_i\mu_i-\sum_{1\leq i\leq n}\la_i\mu_{i+1}$.

Let $\sZ=\mbz[\up,\up^{-1}]$, where $\up$ is an indeterminate.
Let $\Hall$ be the {\it (generic) Ringel--Hall algebra}
of $\tri$, which is by definition the free module over
$\sZ$ with basis $\{u_A\mid A\in\afThnp\}$. The
multiplication is given by
$$u_{A}u_{B}=\up^{\lan \bfd(A),\bfd(B)\ran}\sum_{C\in\afThnp}\vi^{C}_{A,B}(\up^2)u_{C}$$
for $A,B\in \afThnp$, where $\bfd(A)\in\mbn I$ is the dimension vector of $M(A)$.

Let $\Hall_\mbq=\Hall\ot_\sZ\mbq$, where
$\mbq$ is regarded as a $\sZ$-module by specializing $\up$ to $1$.
For $A\in\afThnp$ let $$u_{A,1}=u_A\ot 1\in\Hall_\mbq.$$
\begin{Lem}[{\cite[6.1.4]{DDF}}]\label{th+,th-}
There is a unique injective algebra homomorphism
$\th^+:\Hall_\mbq\ra\afuglq$ (resp., $\th^-:\Hall_\mbq^{\mathrm {op}}\ra\afuglq$) taking $u_{{\afE_{i,j},1}}\mapsto\afE_{i,j}$ (resp.,$u_{{\afE_{i,j},1}}\mapsto\afE_{j,i}$) for all $i<j$.
\end{Lem}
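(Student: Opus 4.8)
The plan is to exhibit the two maps on generators and then verify that the defining relations of the Ringel--Hall algebra $\Hall_\mbq$ are satisfied in $\afuglq$, after which injectivity follows from a dimension/basis comparison. First I would recall that $\Hall_\mbq$ is, by Ringel's theorem for the cyclic quiver (see \cite{Ri93}), generated as a $\mbq$-algebra by the elements $u_{\afE_{i,i+1},1}$ together with the divided-power-type monomials in them; in fact after specializing $\up\mapsto 1$ the generic Hall algebra degenerates and $\Hall_\mbq$ is isomorphic to the enveloping algebra $\sU(\afn^+)$ of the ``nilpotent'' loop subalgebra spanned by the $\afE_{i,j}$ with $i<j$ (this is the classical, non-quantum specialization, so the $\up^{\lan-,-\ran}$ twist disappears and the structure constants become the classical ones for extensions of nilpotent cyclic-quiver representations). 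Concretely, the relations among the $u_{\afE_{i,i+1},1}$ in $\Hall_\mbq$ are exactly the Serre-type relations for $\afn^+\subset\afgl$, because the only nontrivial extensions between the simples $S_i$ are the ``consecutive'' ones. This identifies $\Hall_\mbq\cong\sU(\afn^+)$ canonically, sending $u_{\afE_{i,j},1}$ to the root vector $\afE_{i,j}$ (for $i<j$) up to the scalar coming from iterated extensions; one checks this scalar is $1$ by induction on $j-i$ using $u_{\afE_{i,i+1},1}u_{\afE_{i+1,j},1}=u_{\afE_{i,j},1}+(\text{lower terms})$ at $\up=1$.

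Next I would define $\th^+$ to be the composite $\Hall_\mbq\xrightarrow{\sim}\sU(\afn^+)\hookrightarrow\afuglq$, which is visibly an injective algebra homomorphism with the stated effect on generators; uniqueness is immediate since the $u_{\afE_{i,i+1},1}$ generate $\Hall_\mbq$ as an algebra. For $\th^-$, I would observe that $A\mapsto{}^t\!A$ induces an anti-isomorphism $\Hall_\mbq\to\Hall_\mbq$ (transpose of a nilpotent upper-triangular configuration; at $\up=1$ this is a standard symmetry of Hall numbers), so $\Hall_\mbq^{\mathrm{op}}\cong\Hall_\mbq$, and then compose with the embedding $\sU(\afn^+)\hookrightarrow\afuglq$ followed by the Lie algebra automorphism $\afE_{i,j}\mapsto\afE_{j,i}$ restricted appropriately — more simply, $\th^-$ is the composite $\Hall_\mbq^{\mathrm{op}}\xrightarrow{\sim}\sU(\afn^-)\hookrightarrow\afuglq$ sending $u_{\afE_{i,j},1}\mapsto\afE_{j,i}$, and injectivity and uniqueness follow as before.

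The main obstacle is the careful bookkeeping in the first paragraph: controlling the scalars in the correspondence $u_{\afE_{i,j},1}\leftrightarrow\afE_{i,j}$ and confirming that the specialized Hall multiplication really does reproduce the classical PBW structure constants of $\sU(\afn^+)$ rather than some twisted version. Since, however, the statement is attributed to \cite[6.1.4]{DDF}, I expect the cleanest route is simply to cite the identification $\Hall_\mbq\cong\sU(\afn^+)$ established there (or in the parallel quantum-loop setting of \cite{DDF}) and spend the bulk of the argument on the two trivial consequences: the explicit description on generators and the $\th^-$ case via the transpose anti-automorphism. If one wanted a self-contained proof, the inductive scalar computation on $j-i$ sketched above, combined with the fact that $\dim$ of each graded piece of $\Hall_\mbq$ matches that of $\sU(\afn^+)$ (both have the PBW basis indexed by $\afThnp$), closes the argument.
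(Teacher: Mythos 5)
Your argument hinges on the claim — attributed to Ringel's theorem — that $\Hall_\mbq$ is generated by the simple classes $u_{\afE_{i,i+1},1}$, and then on checking Serre-type relations among them. This is the genuine gap: for the cyclic quiver, which is of tame (not finite) type, the subalgebra generated by the simples is the composition algebra, a \emph{proper} subalgebra of the Ringel--Hall algebra; the reference \cite{Ri93} is precisely about this distinction. The failure persists at $\up=1$. The Lie subalgebra of $\afn^+$ generated by $\afE_{i,i+1}$, $1\le i\le n$, only reaches the differences $\afE_{i,i+mn}-\afE_{j,j+mn}$ in each imaginary degree $m\ge1$ (an $(n-1)$-dimensional space), never the individual elements $\afE_{i,i+mn}$: a direct computation gives $[\afE_{i,j},\afE_{j,i+mn}]=\afE_{i,i+mn}-\afE_{j,j+mn}$ for any $\bar i\ne\bar j$, since the second product $\afE_{j,i+mn}\afE_{i,j}$ does not vanish ($\overline{i+mn}=\bar i$). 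So the Serre relations identify the composition subalgebra (essentially the positive part of $\sU(\frak{sl}_n\otimes\mbq[t,t^{-1}])$), not all of $\Hall_\mbq\cong\sU(\afn^+)$, and the later parts of your argument (``uniqueness is immediate since the $u_{\afE_{i,i+1},1}$ generate $\Hall_\mbq$'') inherit this error.

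The correct route to $\Hall_\mbq\cong\sU(\afn^+)$ is the degeneration-at-$\up=1$ theorem (Riedtmann, Ringel): at $\up=1$ the Hall algebra becomes the universal enveloping algebra of the Lie algebra spanned by the classes $u_{\afE_{i,j},1}$ of \emph{all} indecomposables, with bracket the antisymmetrized specialized Hall numbers, and one checks this Lie algebra is $\afn^+$ via $u_{\afE_{i,j},1}\mapsto\afE_{i,j}$. Your PBW-basis comparison (both sides indexed by $\afThnp$) is the right ingredient for injectivity, and your treatment of $\th^-$ by transpose is essentially correct, modulo the fact that transposing reverses the orientation of the quiver, so one gets an anti-isomorphism to the Hall algebra of the opposite orientation which then must be identified with the original. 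Note finally that the paper itself gives no proof of this lemma; it is cited directly from \cite[6.1.4]{DDF}, so there is nothing in the paper to compare your route against.
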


Let $\afuglqp=\th^+(\Hall_\mbq)$ and $\afuglqm=\th^-(\Hall_\mbq^{\mathrm {op}})$.
Let $\afuglqz$ be the subalgebra of $\afuglq$ generated
by $H_i$ for $1\leq i\leq n$, where $H_i=\afE_{i,i}$. Then we have $\afuglq=\afuglqp\ot\afuglqz\ot\afuglqm$.

We now recall the definition of affine Schur algebras.
Let $\affSr$ be the extended affine symmetric group consisting of all permutations
$w:\mbz\ra\mbz$ satisfying $w(i+r)=w(i)+r$ for $i\in\mbz$.
For $r\in\mbn$ let
$\afLa(n,r)=\{\la\in\afmbnn\mid\sum_{1\leq i\leq n}\la_i=r\}.$
For $\la\in\La_\vtg(n,r)$ let  $\fS_\la:=\fS_{(\la_1,\ldots,\la_n)}$
be the corresponding standard Young subgroup of
the symmetric group $\fS_r$.
For a finite subset $X\han\affSr$ and $\la\in\afLanr$, let
$$\ul X=\sum_{x\in
X}x\in\mbq\affSr.$$ The endomorphism algebras over $\mbz$ or $\mbq$
$$\afSrmbz:=\End_{\mbz\affSr}\biggl
(\bop_{\la\in\afLa(n,r)}\mbz\ul{\fS_\la}\affSr\biggr)\,\text{ and }\,\afSrmbq:=\End_{\mbq\affSr}\biggl
(\bop_{\la\in\afLa(n,r)}\mbq\ul{\fS_\la}\affSr\biggr)$$
are called affine Schur algebras (cf. \cite{GV,Gr99,Lu99}).
Note that $\afSrmbq\cong\afSrmbz\ot_\mbz\mbq$.

For $\la,\mu\in\afLanr$, let
$\afmsD_\la=\{d\mid d\in\affSr,\ell(wd)=\ell(w)+\ell(d)\text{ for
$w\in\fS_\la$}\}$ and
$\afmsD_{\la,\mu}=\afmsD_{\la}\cap{\afmsD_{\mu}}^{-1}$.
By \cite[(3.2.1.5)]{DDF} we have
\begin{equation}\label{minimal coset representative}
\aligned
d^{-1}\in\afmsD_\la
&\iff d(\la_{0,i-1}+1)<d(\la_{0,i-1}+2)<\cdots<d(\la_{0,i-1}+\la_i),\,\forall 1\leq i\leq n.\endaligned
\end{equation}
For $1\leq i\leq n$ and $k\in\mbz$ let
\begin{equation}\label{set R}
R_{i+kn}^{\la}=\{\la_{k,i-1}+1,\la_{k,i-1}+2,\ldots,\la_{k,i-1}+\la_i=\la_{k,i}\},
\end{equation}
where $\la_{k,i-1}=kr+\sum_{1\leq t\leq i-1}\la_t$.
Let $$\afThnr=\{A\in\afThn\mid\sg(A)=r\},$$ where $\sg(A)=\sum_{1\leq i\leq n,\,j\in\mbz}a_{i,j}$.
By \cite[\S7.4]{VV99} (see also \cite[Lem.~9.2]{DF10}), there is
a bijective map
\begin{equation*}
{\jmath_\vtg}:\{(\la, d,\mu)\mid
d\in\afmsD_{\la,\mu},\la,\mu\in\afLanr\}\lra\afThnr
 \end{equation*}
sending $(\la, w,\mu)$ to $A=(a_{k,l})$, where $a_{k,l}=|R_k^\la\cap wR_l^\mu|$ for all $k,l\in\mbz$.

For $A=\jmath_\vtg(\la,d,\mu)\in\afThnr$ with $\la,\mu\in\afLanr$, $d\in\afmsD_{\la,\mu}$, define
$[A]_1\in\afSrmbz$ by
\begin{equation*}\label{def of standard basis}
[A]_1(\ul{\fS_\nu} h)=\dt_{\mu\nu}\ul{\fS_\la
d\fS_\mu}h\
\end{equation*}
where $\nu\in\afLanr$ and $h\in\mbz\affSr$. Then by \cite[Th.~2.2.4]{Gr99} the set
$\{[A]_1\mid A\in\afThnr\}$ forms a $\mbz$-basis for $\afSrmbz$.

The algebra $\afuglq$ and the affine Schur algebra $\afSrmbq$ are related by a surjective algebra homomorphism $\xi_r$, which we now describe.
Let
$$\afThnpm=\{A\in\afThn\mid a_{i,i}
=0\text{ for all $i$}\}.$$
For $A\in\afThnpm,{\bf j}\in \afmbnn$ let
$$A[{\bf j},r]= \sum_{\mu\in\La_\vtg(n,r-\sg(A))}
\mu^\bfj[A+\diag(\mu)]_1\in\afSrmbq.
$$
For $i\in\mbz$, let
$\afbse_i\in\afmbnn$ be the element defined by
\begin{equation*}
(\afbse_i)_j=
\begin{cases}
1&\text{if $j\equiv i \mnmod n$}\\
0&\text{otherwise}.
\end{cases}
\end{equation*}
By \cite[Th.~6.1.5]{DDF},
there is an algebra homomorphism
\begin{equation}\label{xir}
\xi_r:\afuglq\ra\afSrmbq
\end{equation}
such that $\xi_r(\afE_{i,j})=\afE_{i,j}[\bfl,r]$ and
$\xi_r(H_i)=0[\afbse_i,r]$
for $i\not=j$.
The maps $\xi_r$ induce an algebra homomorphism
\begin{equation}\label{zeta}
\xi=\prod_{r\geq 0}\xi_r:\sU(\afgl)\ra\prod_{r\geq 0}\afSrmbq.
\end{equation}

For $A\in\afThnpm$ and $\bfj\in\afmbnn$, we set
$$A[\bfj]=(A[\bfj,r])_{r\geq 0}\in\prod_{r\geq 0}\afSrmbq.$$
Let $\afVn_\mbq$ be the $\mbq$-subspace of $\prod_{r\geq 0}\afSrmbq$ spanned by
$A[\bfj]$ for $A\in\afThnpm$, $\bfj\in\afmbnn$.

\begin{Thm}[{\cite[Th.~6.3.4]{DDF}}]\label{realization}
The $\mbq$-space $\afVn_\mbq$ is a subalgebra of $\prod_{r\geq 0}\afSrmbq$ with $\mbq$-basis $\{A[\bfj]\mid A\in\afThnpm,\,\bfj\in\afmbnn\}$. Furthermore, the map $\xi$ is injective and induces a $\mbq$-algebra isomorphism $\xi:\sU(\afgl)\ra\afVn_\mbq$.
\end{Thm}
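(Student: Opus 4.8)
The plan is to follow the template of the BLM argument for the finite case and its affine quantum analogue in \cite{DDF}, but working directly over $\mbq$ with the classical universal enveloping algebra rather than the quantum one, so that many $\up$-powers degenerate. First I would fix notation: write $\msA_\mbq = \prod_{r\geq 0}\afSrmbq$, and recall that by \eqref{zeta} we have the algebra map $\xi=\prod_r\xi_r$ into $\msA_\mbq$. The key structural input is the triangular decomposition $\afuglq=\afuglqp\ot\afuglqz\ot\afuglqm$ together with Lemma~\ref{th+,th-}, which identifies $\afuglqp$ (resp.\ $\afuglqm$) with the specialized Ringel--Hall algebra $\Hall_\mbq$ (resp.\ its opposite). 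Using these one reduces everything to understanding products $\xi(x^+)\,\xi(h)\,\xi(x^-)$ with $x^\pm$ monomials in the $\afE_{i,j}$ ($i<j$, resp.\ $i>j$) and $h$ a monomial in the $H_i$.

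The main steps, in order, are: (1) \emph{Image identification.} Show that for $A\in\afThnpm$ and $\bfj\in\afmbnn$, the element $A[\bfj]$ of $\msA_\mbq$ lies in $\xi(\sU(\afgl))$. This is done by downward induction on a suitable partial order on $\afThnpm$ (the Bruhat-type order used in \cite{DDF}): one computes $\xi$ of an appropriate PBW-type monomial $\mathbf e^{(A^+)}\cdot\bsg(\bfj)\cdot\mathbf f^{(A^-)}$ (where $A=A^++A^-$ is the decomposition into strictly upper and strictly lower triangular parts, and $\bsg(\bfj)$ is a monomial in the $H_i$) and checks, via the multiplication formula for the $[B]_1$'s in the affine Schur algebra together with the definition of $A[\bfj,r]$, that this equals $A[\bfj]$ plus a $\mbq$-linear combination of terms $A'[\bfj']$ with $A'$ strictly lower in the order; one also needs that the leading coefficient is $1$, which holds because all the relevant Hall-polynomial and combinatorial coefficients specialize at $\up=1$ to the count of a single point. (2) \emph{Spanning and linear independence.} Conclude that $\xi(\sU(\afgl))$ is exactly the span of $\{A[\bfj]\}$, hence $\afVn_\mbq$ is this image and in particular is a subalgebra; then prove the $A[\bfj]$ are $\mbq$-linearly independent in $\msA_\mbq$ by showing that for $r\gg 0$ their $r$-th components, i.e.\ $\sum_{\mu}\mu^{\bfj}[A+\diag(\mu)]_1$, remain independent — this follows because the matrices $A+\diag(\mu)$ are distinct elements of $\afThnr$ for distinct $(A,\mu)$ and the $[B]_1$ form a basis of $\afSrmbz$, while the Vandermonde-type evaluation in $\mu^{\bfj}$ separates different $\bfj$. (3) \emph{Injectivity of $\xi$.} Since $\xi$ is an algebra map onto $\afVn_\mbq$ and both source and target have the PBW/BLM-type basis matched bijectively by step (1), $\xi$ is an isomorphism; alternatively, injectivity of $\xi$ follows from injectivity of each $\th^\pm$ (Lemma~\ref{th+,th-}) on the triangular pieces plus the fact that $\xi_r(H_i)=0[\afbse_i,r]$ detects the torus part for large $r$.

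I expect the main obstacle to be step (1), the image identification — specifically, verifying that the product $\xi(\mathbf e^{(A^+)})\,\xi(\bsg(\bfj))\,\xi(\mathbf f^{(A^-)})$ in $\msA_\mbq$ has the stated triangular expansion with leading term $A[\bfj]$. This requires a careful bookkeeping of the affine Schur algebra multiplication formulas (the affine analogue of the BLM fundamental multiplication formulas, as in \cite[Ch.~3, Ch.~6]{DDF}), keeping track of how the diagonal-shift parameters $\mu$ interact with the off-diagonal entries, and checking that the $\up$-specialization at $1$ does not create unexpected cancellation or higher-order contamination. Since this is the $\up=1$ specialization of a computation already carried out in \cite{DDF} for the quantum affine $\mathfrak{gl}_n$, the cleanest route is to cite \cite[Th.~6.3.4]{DDF} and its proof and specialize, noting that all the Hall polynomials $\vi^C_{A,B}(\up^2)$ and the structure constants involved are polynomials in $\up$ with well-defined value at $\up=1$, and that the generic integral form of \cite{DDF} specializes compatibly; the only genuinely new point to record is that no degeneracy occurs, i.e.\ the specialized basis $\{A[\bfj]\}$ still spans and is independent. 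The other steps are then routine linear algebra together with the already-stated Theorem~\ref{realization}'s quantum counterpart.
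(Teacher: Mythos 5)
This theorem is quoted verbatim from \cite[Th.~6.3.4]{DDF}, with the citation appearing as the theorem's attribution, and the paper supplies no proof of it; there is therefore nothing in the paper against which to compare your attempt. Your reconstruction of the BLM/DDF-style argument is broadly consistent with how the result is actually established in \cite{DDF}: one expands $\xi(\mathbf e^{(A^+)}\bsg(\bfj)\mathbf f^{(A^-)})$ triangularly with leading term $A[\bfj]$ using the fundamental multiplication formulas in the affine Schur algebras, proves linear independence of the $A[\bfj]$ by letting $r\to\infty$ and separating the $\bfj$-part through the evaluations $\mu^{\bfj}$, and gets injectivity from the triangular decomposition together with Lemma~\ref{th+,th-}. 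The paper itself later proves the closely parallel integral statements \ref{tri1}, \ref{tri for affine gln}, \ref{basis2 for afuglz} by exactly this mechanism, so your outline is the right template.

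Two cautionary points. First, your concluding ``cleanest route'' --- cite \cite[Th.~6.3.4]{DDF} and specialize --- is circular as written, because the statement you are asked to prove \emph{is} \cite[Th.~6.3.4]{DDF}; if the intent is to specialize a quantum version from \cite{DDF} at $\up=1$, you would need to reference that quantum theorem and actually justify compatibility of the specialization with the bases and structure constants rather than invoke 6.3.4 itself. Second, your step (1) claim that ``the leading coefficient is $1$'' is not automatic: in the affine Schur algebra multiplication formulas (see \ref{formula1}--\ref{formula}) the leading coefficient is a product of binomials $\binom{a_{i,j}+k}{k}$, and the correct statement is that, after choosing an ordering on $\sL^\pm$ and taking the product of the ``one-entry'' elements $(a_{i,j}\afE_{i,j})\{\bfl\}$, the $A[\bfj]$-coefficient is $1$ precisely because each off-diagonal entry is built up in a single step --- this is what \ref{tri1} makes precise and should be cited or re-proved rather than asserted by appeal to Hall polynomials counting ``a single point.''
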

We shall identify $\sU(\afgl)$ with $\afVn_\mbq$ via $\xi$ and hence identify $\afE_{i,j}$ with $\afE_{i,j}[\bfl]$, etc., in the sequel.

\section{Multiplication formulas in affine Schur algebras}
Some multiplication formulas in affine Schur algebras were given in \cite[Prop.~6.2.3]{DDF}. These formulas are very important for the realization of $\afuglq$ given in \cite[Ch.~6]{DDF}. However, these formulas are not enough for the investigation of $\Unkh$. We will generalize these formulas to a more general setting in \ref{formula}.



We need the following simple lemma.

\begin{Lem}\label{double coset}
For $\la,\mu\in\afLanr$ and $d\in\msD_{\la,\mu}^\vtg$ with
$\jmath_\vtg(\la,d,\mu)=A\in\afThnr$, let $\nu^{(i)}$ be the composition of $\la_i$ obtained by
 removing all zeros from row $i$ of $A$. Then $\frak S_\la
\cap d\frak S_\mu d^{-1}=\frak S_\nu$, where
$\nu=(\nu^{(1)},\ldots,\nu^{(n)})$.
\end{Lem}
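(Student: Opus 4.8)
The plan is to reduce the identity to an elementary statement about set partitions of $\mbz$ stabilised inside $\affSr$. First recall the combinatorial meaning of Young subgroups: for $\la\in\afLanr$ the blocks $R_k^\la$, $k\in\mbz$, of \eqref{set R} partition $\mbz$, and $\fS_\la$ is exactly the set of $w\in\affSr$ with $w(R_k^\la)=R_k^\la$ for all $k$. For $1\le k\le n$ this is immediate from $\fS_\la=\fS_{(\la_1,\ldots,\la_n)}\le\fS_r$; the general case follows since $R_{i+kn}^\la=R_i^\la+kr$ and $w(x+r)=w(x)+r$, while conversely a $w\in\affSr$ fixing every $R_k^\la$ fixes $\{1,\ldots,r\}=\bigcup_{i=1}^nR_i^\la$, whence $w\in\fS_r$ and then $w\in\fS_\la$. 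In particular $d\fS_\mu d^{-1}$ is the set of $w\in\affSr$ fixing every $dR_l^\mu$, so $\fS_\la\cap d\fS_\mu d^{-1}$ is the set of $w\in\affSr$ fixing every block of the common refinement $\mathcal R:=\{R_k^\la\cap dR_l^\mu\mid k,l\in\mbz\}$ (empty pieces discarded): indeed, if $w$ fixes every $R_k^\la$ and every $dR_l^\mu$ then $w(R_k^\la\cap dR_l^\mu)=w(R_k^\la)\cap w(dR_l^\mu)=R_k^\la\cap dR_l^\mu$, and conversely a permutation fixing every block of $\mathcal R$ fixes every block of any partition coarser than $\mathcal R$, in particular every $R_k^\la$ and every $dR_l^\mu$.

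Next I would identify $\mathcal R$ with the standard block partition $\{R_j^\nu\mid j\in\mbz\}$ of $\nu=(\nu^{(1)},\ldots,\nu^{(n)})$, where $\fS_\nu$ denotes the Young subgroup of $\fS_r$ attached to this (concatenated) composition of $r$. Since $d\in\afmsD_{\la,\mu}\subseteq\afmsD_\la$, formula \eqref{minimal coset representative} applied to $d^{-1}$ in place of $d$ shows that $d^{-1}$ is strictly increasing on each $R_i^\la$ ($1\le i\le n$), hence, as $d^{-1}(x+r)=d^{-1}(x)+r$, on every $R_k^\la$. Fix $k$. If $a<c<b$ in $R_k^\la$ satisfy $a,b\in dR_l^\mu$, then $d^{-1}(a)<d^{-1}(c)<d^{-1}(b)$ with $d^{-1}(a),d^{-1}(b)\in R_l^\mu$; since $R_l^\mu$ is an interval of consecutive integers, $d^{-1}(c)\in R_l^\mu$, i.e. $c\in dR_l^\mu$. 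Thus each nonempty $R_k^\la\cap dR_l^\mu$ is a subinterval of $R_k^\la$; and if $l<l'$, $a\in R_k^\la\cap dR_l^\mu$, $b\in R_k^\la\cap dR_{l'}^\mu$, then $d^{-1}(a)\in R_l^\mu$ lies below $d^{-1}(b)\in R_{l'}^\mu$, forcing $a<b$. Hence $R_k^\la$ is split into consecutive subintervals whose sizes, from left to right, are the nonzero entries $a_{k,l}=|R_k^\la\cap dR_l^\mu|$ of row $k$ of $A$ listed in order of increasing $l$ — that is, the parts of $\nu^{(k)}$. Concatenating over $k=1,\ldots,n$, with $R_1^\la,\ldots,R_n^\la$ the consecutive blocks of $\{1,\ldots,r\}$, and extending by periodicity, one obtains $\mathcal R=\{R_j^\nu\mid j\in\mbz\}$.

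Combining the two steps, $\fS_\la\cap d\fS_\mu d^{-1}$ is the set of $w\in\affSr$ fixing every $R_j^\nu$, which by the reasoning of the first step (now applied to the composition $\nu$ of $r$) is exactly $\fS_\nu$. I expect the only delicate point to be the bookkeeping in the second step: one must keep the affine indexing consistent so that ``row $i$ of $A$ with its zeros removed'' genuinely records the left-to-right order of the intervals $R_i^\la\cap dR_l^\mu$ inside $R_i^\la$. Everything else is formal manipulation of block partitions.
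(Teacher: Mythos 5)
The paper leaves this lemma unproved---it is introduced as ``the following simple lemma''---so there is no argument in the text to compare against. Your proof is correct and follows the natural route: you identify $\fS_\la$ and $d\fS_\mu d^{-1}$ as the stabilizers in $\affSr$ of the block partitions $\{R_k^\la\}$ and $\{dR_l^\mu\}$ of $\mbz$, so that the intersection is the stabilizer of the common refinement; then $d\in\afmsD_{\la,\mu}\subseteq\afmsD_\la$ together with \eqref{minimal coset representative} (applied to $d^{-1}$) shows $d^{-1}$ is strictly increasing on each $R_k^\la$, forcing the nonempty pieces $R_k^\la\cap dR_l^\mu$ to cut $R_k^\la$ into \emph{consecutive} subintervals whose sizes, read in increasing $l$, are the nonzero entries of row $k$ of $A$, i.e.\ the parts of $\nu^{(k)}$; concatenating over $k=1,\ldots,n$ and extending $r$-periodically identifies the refinement with the block partition attached to $\nu$, whence the intersection equals $\fS_\nu$. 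One minor notational remark: the symbols $R_j^\nu$ and $\fS_\nu$ are defined in the paper only for $\nu\in\afLanr$, so you are tacitly extending them to a composition $\nu$ of $r$ with more than $n$ parts; this is harmless and is clearly what the lemma's statement intends.
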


By \cite[(6.4.0.1)]{DDF}, we have
\begin{equation}\label{conjugate intersection}
\ul{{\frak S}_\la} w\ul{\frak S_\mu}=|w^{-1} \frak S_\la
w\cap\frak S_\mu|\ul{{\frak S}_\la w\frak S_\mu}
\end{equation}
for $\la,\mu\in\afLanr$ and $w\in\affSr$.

If $\{i_1,i_2,\cdots,i_t\}$ is a subset of $\{1,2,\cdots,r\}$ and $w\in\affSr$ is such that $w(s)=s$ for $s\in\{1,2,\cdots,r\}\backslash\{i_1,i_2,\cdots,i_t\}$ then the element $w$ will be written as
$$\left(\begin{matrix}
i_1&i_2&\cdots&i_t\\
w(i_1)&w(i_2)&\cdots&w(i_t)\\
\end{matrix}\right).$$

Suppose $\la\in\afLa(n,r)$.
Let $k\geq 1$, $1\leq h\leq n$ and $j=a+mn\in\mbz$ with $j\not=h$, $1\leq a\leq n$ and $m\in\mbz$.
If $h<a$ and $m\geq 0$ let
$$u_{a,m,h,k}^\la\!=\!\left(\begin{matrix}
\la_{0,h}+1\!&\!\cdots\!&\!\la_{0,a-1}  &\la_{0,a-1}+1&\cdots& \la_{0,a-1}+k\\
\la_{0,h}+1+k&\cdots&\la_{0,a-1}+k&
\la_{0,h}-mr+1&\cdots& \la_{0,h}-mr+k\\
\end{matrix}\right).$$
If $h\leq a$ and $m<0$ let
$$u_{a,m,h,k}^\la\!=\!\left(\begin{matrix}
\la_{0,h-1}+1\!\!\!   &  \cdots\!\!& \la_{0,a}-k  \!\!\!&\la_{0,a}-k+1\!\!&\cdots\!\!& \la_{0,a}\\
\la_{0,h-1}+1+k\!\!\! & \cdots\!\!& \la_{0,a}
\!\!\!&
\la_{0,h-1}-mr+1\!\!&\cdots\!\!& \la_{0,h-1}-mr+k\\
\end{matrix}\right).$$
If $h\geq a$ and $m> 0$ let
$$u_{a,m,h,k}^\la\!=\!\left(\begin{matrix}
\la_{0,a-1}+1\!&\!\cdots\!&\!\la_{0,a-1}+k  &\la_{0,a-1}+k+1&\cdots& \la_{0,h}\\
\la_{0,h}-mr-k+1&\cdots&\la_{0,h}-mr&
\la_{0,a-1}+1&\cdots& \la_{0,h}-k\\
\end{matrix}\right).$$
If $h>a$ and $m\leq 0$ let
$$u_{a,m,h,k}^\la\!=\!\left(\begin{matrix}
\la_{0,a}-k+1\!&\!\cdots\!&\!\la_{0,a}  &\la_{0,a}+1&\cdots& \la_{0,h-1}\\
\la_{0,h-1}-mr-k+1&\cdots&\la_{0,h-1}-mr&
\la_{0,a}+1-k&\cdots& \la_{0,h-1}-k\\
\end{matrix}\right).$$

Combining \eqref{minimal coset representative} and \ref{double coset},
we obtain the following result.
\begin{Lem}\label{power of e_i}
Maintain the notation introduced above.
\begin{itemize}
\item[(1)]
$u_{a,m,h,k}^\la\in\afmsD_{\la+k\afbse_h-k\afbse_j,\la}$ and $\jmath_\vtg(\la+k\afbse_h-k\afbse_j,u_{a,m,h,k}^\la,\la)=k\afE_{h,j}+\diag(\la-k\afbse_j)$.
\item[(2)]
$\frak S_\beta=(u_{a,m,h,k}^\la)^{-1}\frak S_{\la+k\afbse_h-k\afbse_j} u_{a,m,h,k}^{\la}\cap\fS_\la$  where
\[
\bt=
\begin{cases}
(\la_1,\ldots,\la_{a-1},
k,\la_a-k,\la_{a+1},\ldots,\la_n),\;&\text{if
either $m> 0$ or both $m=0$ and $h<a$;}\\
(\la_1,\ldots,\la_{a-1},\la_a-k,k,\la_{a+1},\ldots,\la_n),&\text{if either $m<0$ or both $m=0$ and $h>a$}.
\end{cases}
\]
\end{itemize}
\end{Lem}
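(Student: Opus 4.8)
The plan is to establish (1) by a direct check against the combinatorial criterion \eqref{minimal coset representative} and the definition of the bijection $\jmath_\vtg$, and then to read off (2) from (1) together with Lemma~\ref{double coset}. Write $d=u_{a,m,h,k}^\la$, put $\la'=\la+k\afbse_h-k\afbse_j$, and $A=k\afE_{h,j}+\diag(\la-k\afbse_j)$. First note $\la'\in\afLanr$: since $1\le h\le n$ and $1\le a\le n$ the two shifts $\pm k\afbse_\bullet$ contribute $+k$ and $-k$ to the total, and $\la_a\ge k$ is forced because the displayed one-line presentations of $d$ refer to $k$ consecutive entries of $R_a^\la$. To prove $d\in\afmsD_{\la',\la}$ it suffices, by \eqref{minimal coset representative} (used for $\la$ to get $d^{-1}\in\afmsD_\la$, and for $\la'$ with $d$ replaced by $d^{-1}$ to get $d\in\afmsD_{\la'}$), to verify that $d$ is increasing on each $R_q^\la$ and $d^{-1}$ is increasing on each $R_p^{\la'}$, for $1\le p,q\le n$.

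For these two monotonicity checks one works through the four cases ($h<a$, $m\ge0$), ($h\le a$, $m<0$), ($h\ge a$, $m>0$), ($h>a$, $m\le0$) in the definition of $u_{a,m,h,k}^\la$. In each case $d$ is a product of a rigid translation by $k$ of a single interval of consecutive integers together with the transfer of a length-$k$ subinterval to a new location; hence on any block $R_q^\la$ the restriction of $d$ is the identity, a rigid order-preserving translation, or splits $R_q^\la$ into a rigidly translated piece and a length-$k$ piece sent to an interval lying entirely above or entirely below the first piece. The sign conditions on $m$ and the inequalities between $h$ and $a$ are exactly what force these images into the correct relative position, so that order is preserved; the one point requiring care is the affine wrap-around produced by the shift $-mr$, whose size relative to the partial sums of $\la$ (each bounded by $r$) is what makes the relevant inequality strict. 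The inverse $d^{-1}$ is obtained by interchanging the two rows of the displayed two-line notation, and the same analysis applies to the blocks $R_p^{\la'}$.

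Given $d\in\afmsD_{\la',\la}$, one computes $\jmath_\vtg(\la',d,\la)$ entry by entry from $a_{p,q}=|R_p^{\la'}\cap dR_q^\la|$, using $dR_{q+sn}^\la=dR_q^\la+sr$ to reduce to $1\le q\le n$ and the explicit form of $d$ above. The outcome is $a_{q,q}=\la_q$ for $q\not\equiv j\ (\mathrm{mod}\ n)$, $a_{q,q}=\la_a-k$ for $q\equiv j$, $a_{p,q}=k$ exactly at the positions $(p,q)=(h+sn,j+sn)$, and $a_{p,q}=0$ otherwise; that is, $\jmath_\vtg(\la',d,\la)=A$, which is (1). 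For (2), $d\in\afmsD_{\la',\la}$ gives $d^{-1}\in\afmsD_{\la,\la'}$ with $\jmath_\vtg(\la,d^{-1},\la')={}^t\!A=k\afE_{j,h}+\diag(\la-k\afbse_j)$, so Lemma~\ref{double coset} applied to $d^{-1}$ yields $\fS_\la\cap d^{-1}\fS_{\la'}d=\fS_\nu$ with $\nu^{(i)}$ the composition of $\la_i$ obtained by deleting zeros from column $i$ of $A$. For $i\ne a$ column $i$ has the single nonzero entry $\la_i$, so $\nu^{(i)}=(\la_i)$; column $a$ has the entries $\la_a-k$ at row $a$ and $k$ at row $h-mn$, and since $h-mn<a$ precisely when $m>0$ or ($m=0$ and $h<a$), while $h-mn>a$ in the remaining cases, reading from top to bottom gives $\nu^{(a)}=(k,\la_a-k)$ or $(\la_a-k,k)$ respectively. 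Thus $\nu=\bt$ and $\fS_\bt=(u_{a,m,h,k}^\la)^{-1}\fS_{\la'}u_{a,m,h,k}^\la\cap\fS_\la$.

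The routine but unavoidable obstacle is the case analysis underlying (1): each of the four defining cases of $u_{a,m,h,k}^\la$ must be pushed through both monotonicity statements and through the intersection count for $\jmath_\vtg$, and throughout one has to keep the affine periodicity straight --- in particular the $-mr$ shifts and the fact that the boundaries $\la_{0,q-1}$ of $R_q^\la$ and $\la'_{0,q-1}$ of $R_q^{\la'}$ differ by $k$ exactly for the two blocks touched by $\afbse_h$ and $\afbse_j$. Once this bookkeeping is in place, everything else is formal.
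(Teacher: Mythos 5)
The paper itself offers no detailed argument here: it simply asserts ``Combining \eqref{minimal coset representative} and \ref{double coset}, we obtain the following result.'' Your proposal fills in exactly that outline --- verifying $d\in\afmsD_{\la',\la}$ via the monotonicity criterion \eqref{minimal coset representative} applied to both $d$ on $R_q^\la$ and $d^{-1}$ on $R_p^{\la'}$, then reading off $\jmath_\vtg(\la',d,\la)$ from the intersection counts, and finally deducing (2) by applying Lemma~\ref{double coset} to $d^{-1}$ and noting $\jmath_\vtg(\la,d^{-1},\la')={}^t\!A$ so that the composition $\nu^{(i)}$ is obtained from column $i$ of $A$, with the case split $h-mn<a$ versus $h-mn>a$ matching precisely the two clauses in the definition of $\bt$. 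This is the intended route and the details you sketch (the four-case monotonicity check, the entry-by-entry count yielding $k$ at $(h+sn,j+sn)$ and $\la_a-k$ at $(j+sn,j+sn)$) are correct; the only explicit arithmetic you leave implicit is the rigid $\pm k$ shifts of the block boundaries $\la'_{0,q-1}-\la_{0,q-1}\in\{0,\pm k\}$, but that is the bookkeeping you flag, and it works out.
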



For $i\in\mbz$, let $\bar i$ denote the integer modulo $n$.
For $A\in\aftiThn$, let
$\ro(A)=\bigl(\sum_{j\in\mbz}a_{i,j}\bigr)_{i\in\mbz}$ and
$\co(A)=\bigl(\sum_{i\in\mbz}a_{i,j}\bigr)_{j\in\mbz}.$ Note that if $A,B\in\afThnr$ with $\co(B)\not=\ro(A)$, then $[B]_1\cdot[A]_1=0$ in $\afSrmbz$.

\begin{Prop}\label{formula1}
Let $k\geq 1$, $1\leq h\leq n$ and $j\in\mbz$ with $\bar j\not=\bar h$. Assume $A\in\afThnr$ and $\la=\ro(A)\geq k\afbse_j$.
Then the following identity holds in $\afSrmbz:$
$$[k\afE_{h,j}+\diag(\la-k\afbse_j)]_1\cdot[A]_1
=\sum_{\dt\in\La(\infty,k)\atop a_{j,t}\geq\dt_t,\,\forall t}\prod_{t\in\mbz}\left({a_{h,t}+\dt_t\atop\dt_t}\right)
\bigg[A+\sum_{t\in\mbz}\dt_t(\afE_{h,t}-\afE_{j,t})\bigg]_1,$$
where $\La(\infty,k)=\{(\la_i)_{i\in\mbz}\mid\la_i\in\mbn\  \forall i,\,\sum_{i\in\mbz}\la_i=k\}$.
\end{Prop}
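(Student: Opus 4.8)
The plan is to prove the multiplication formula by direct computation in the affine Schur algebra, reducing everything to the double-coset calculus established above. Write $B = k\afE_{h,j} + \diag(\la - k\afbse_j)$, so $\ro(B) = \la + k\afbse_h - k\afbse_j$ and $\co(B) = \la = \ro(A)$; hence $[B]_1\cdot[A]_1$ need not vanish. By \ref{power of e_i}(1), $B = \jmath_\vtg(\la + k\afbse_h - k\afbse_j,\, u_{a,m,h,k}^\la,\, \la)$, where $j = a + mn$ with $1\le a\le n$. Abbreviate $d = u_{a,m,h,k}^\la$ and $\mu = \la + k\afbse_h - k\afbse_j$. The first step is to apply the definition of the standard basis: writing $A = \jmath_\vtg(\nu, w, \rho)$ with $\nu = \ro(A) = \la$, we have
\[
[B]_1\cdot[A]_1\,(\ul{\fS_\sigma}\,x) = \dt_{\rho\sigma}\,[B]_1(\ul{\fS_\la w \fS_\rho}\,x),
\]
so the problem becomes expanding $[B]_1(\ul{\fS_\la w\fS_\rho}\,x)$. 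Using \eqref{conjugate intersection} to pass between $\ul{\fS_\la w\fS_\rho}$ and $\ul{\fS_\la}w\ul{\fS_\rho}$ up to the scalar $|w^{-1}\fS_\la w\cap\fS_\rho|$, and then the $\mbz\affSr$-linearity of $[B]_1$ together with $[B]_1(\ul{\fS_\la}\,y) = \ul{\fS_\mu d\fS_\la}\,y$, reduces the computation to understanding the product $\ul{\fS_\mu d\fS_\la}\cdot w\ul{\fS_\rho}$ inside $\mbq\affSr$, i.e. to a coset-counting problem.

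The second, combinatorial step is to identify the matrix of each term that appears and its coefficient. The element $d = u_{a,m,h,k}^\la$ moves a block of $k$ indices out of the row-$j$ "slot'' $R_j^\la$ (across the appropriate period according to the sign of $m$ and the position of $h$ relative to $a$) into the row-$h$ slot. Multiplying $[B]_1$ against $[A]_1$ therefore transfers, for each residue $t$, some number $\dt_t$ of the $a_{j,t}$ entries sitting in the relevant part of column $t$ over to row $h$, subject to $\sum_t \dt_t = k$ and $\dt_t \le a_{j,t}$; this is exactly the index set $\{\dt\in\La(\infty,k) : a_{j,t}\ge\dt_t\ \forall t\}$ and the resulting matrix is $A + \sum_t \dt_t(\afE_{h,t} - \afE_{j,t})$. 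The binomial coefficient $\prod_t\binom{a_{h,t}+\dt_t}{\dt_t}$ arises as the number of ways of interleaving the $\dt_t$ newly-arrived elements among the $a_{h,t}$ elements already in position $(h,t)$ — precisely the count coming from the residual double-coset index $|w^{-1}\fS_\la w\cap\fS_\rho|$-type factors together with \ref{double coset} applied to the new matrix. I would make this rigorous by choosing, via \eqref{minimal coset representative}, the distinguished minimal-length representatives for the relevant cosets and tracking which basis elements $[C]_1$ occur with which multiplicity; \ref{double coset} and \ref{power of e_i}(2) supply the sizes of the Young subgroups $\fS_\bt$ needed to convert between the normalizations $\ul{\fS_\la w\fS_\mu}$ and $\ul{\fS_\la}w\ul{\fS_\mu}$, so that all the Hall-polynomial-style scalars collapse to the stated binomials.

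I expect the main obstacle to be bookkeeping the four cases in the definition of $u_{a,m,h,k}^\la$ (according to $h<a$ or $h\ge a$ and the sign of $m$) uniformly, and verifying that in each case the period shifts are absorbed correctly so that the answer is genuinely indexed by $\dt\in\La(\infty,k)$ over \emph{all} $t\in\mbz$ with the single periodicity-compatible constraint $a_{j,t}\ge\dt_t$. A secondary technical point is checking that the coefficients do not pick up any power of $\up$: since we are working in $\afSrmbz$ (the $\up=1$ specialization, not the generic affine Schur algebra), all the quantum integers degenerate and one must confirm that no spurious $\up$-factor survives — this follows because \eqref{conjugate intersection} and the standard-basis multiplication rule of \cite[Th.~2.2.4]{Gr99}, \cite[Th.~2.2.4]{Gr99} are stated integrally with no $\up$-weighting. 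Once the single-step formula \ref{formula1} is established, it will serve (by iteration, controlling which products remain nonzero via the $\co/\ro$ compatibility noted before the statement) as the engine for the more general multiplication formulas in \ref{formula} and ultimately for the construction of $\Unkh$.
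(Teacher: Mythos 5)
Your proposal follows essentially the same route as the paper's proof: identify $[k\afE_{h,j}+\diag(\la-k\afbse_j)]_1$ with the double coset of $u_{a,m,h,k}^\la$ via Lemma \ref{power of e_i}, expand the product against $[A]_1$ using \eqref{conjugate intersection} and Lemma \ref{double coset}, and then count minimal-length representatives to obtain the binomial coefficients. The paper organizes the count slightly differently — it derives $\prod_t\binom{a_{h,t}+\dt_t}{\dt_t}$ by first producing $|\mpX_\dt|=\prod_t\binom{a_{j,t}}{\dt_t}$ through explicit bijections $\mpX_\dt\to\mpY_\dt\to\mpZ_\dt$ and then multiplying by the normalization factors $\prod_t\frac{(a_{h,t}+\dt_t)!(a_{j,t}-\dt_t)!}{a_{h,t}!a_{j,t}!}$, whereas you read off the final binomial directly as an interleaving count — but this is the same computation seen from the other side, so the approaches coincide.
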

\begin{proof}
Assume $\mu=\co(A)$ and $d\in\msD^\vtg_{\la,\mu}$ such that
$\jmath_\vtg(\la, d,\mu)=A$.
We write $j=a+mn$ with $1\leq a\leq n$ and $m\in\mbz$.
Then by
\ref{double coset}, \eqref{conjugate intersection} and
\ref{power of e_i} we have
\[
\begin{split}
[k\afE_{h,j}+\diag(\la-k\afbse_j)]_1[A]_1(\ul{\frak S_\mu})&=\ul{\frak S_{\la+k\afbse_h-k\afbse_j}u_{a,m,h,k}^\la\frak S_\la}\cdot d\cdot\ul{\msD^\vtg_\al\cap\frak S_\mu}\\
&=\frac{1}{|\frak S_\la|}\ul{\frak S_{\la+k\afbse_h-k\afbse_j}u_{a,m,h,k}^\la\frak
S_\la}\cdot\ul{\frak S_\la d\frak S_\mu}\\
&=\frac{1}{|\frak S_\la|}\prod_{1\leq s\leq n\atop
t\in\mbz}\frac{1}{a_{s,t}!}\ul{\frak S_{\la+k\afbse_h-k\afbse_j}u_{a,m,h,k}^\la\frak
S_\la}\cdot\ul{\frak S_\la}\cdot d\cdot\ul{\frak S_\mu}\\
&=\prod_{1\leq s\leq n\atop t\in\mbz}\frac{1}{a_{s,t}!}\ul{\frak
S_{\la+k\afbse_h-k\afbse_j}u_{a,m,h,k}^\la\frak S_\la}\cdot d\cdot\ul{\frak S_\mu}\\
&=\prod_{1\leq s\leq n\atop t\in\mbz}\frac{1}{a_{s,t}!}\ul{\frak
S_{\la+k\afbse_h-k\afbse_j}}\cdot u_{a,m,h,k}^\la\cdot\ul{\msD^\vtg_\beta\cap\frak
S_\la}\cdot d\cdot\ul{\frak S_\mu},
\end{split}
\]
where $\frak S_\al=d^{-1}\frak S_\la d\cap\frak S_\mu$ and
$\bt$ is as in \ref{power of e_i}(2).

For $w\in\afmsD_\bt\cap\frak S_\la$ let
$B^{(w)}=(b_{s,t}^{(w)})\in\afThnr$, where
$b_{s,t}^{(w)}=|R_s^{\la+k\afbse_h-k\afbse_j}\cap u_{a,m,h,k}^\la wd R_t^\mu|
=|w^{-1}(u_{a,m,h,k}^\la)^{-1}R_s^{\la+k\afbse_h-k\afbse_j}\cap dR_t^\mu|$.
Let
$$Y_{h,a,m}
=\begin{cases}
\{\la_{0,a-1}+1+mr,\cdots,\la_{0,a-1}+k+mr\}\! \!&\text{if either $m>0$ or both $m=0$ and $h<a$;}\\
\{\la_{0,a}-k+1+mr,\cdots,\la_{0,a}+mr\}\!\!&\text{if either $m<0$ or both $m=0$ and $h>a$}.
\end{cases}$$
Then for $w\in\afmsD_\bt\cap\frak S_\la$ we have
$$
w^{-1}(u_{a,m,h,k}^\la)^{-1}R_s^{\la+k\afbse_h-k\afbse_j}=
\begin{cases}
R_s^\la&\text{if $1\leq s\leq n$ and $\bar s\not=\bar h,\bar j$;} \\
R_h^\la\cup w^{-1}(Y_{h,a,m})&\text{if $s=h$;} \\
R_j^\la\backslash w^{-1}(Y_{h,a,m})&\text{if $s=j$.} \\
\end{cases}
$$
This implies that
$$
b_{s,t}^{(w)}=
\begin{cases}
a_{s,t} &\text{if $1\leq s\leq n$ and $\bar s\not=\bar h,\bar j$;}\\
a_{h,t}+|w^{-1}(Y_{h,a,m})\cap dR_t^{\mu}|&\text{if $s=h$;}\\
a_{j,t}-|w^{-1}(Y_{h,a,m})\cap dR_t^{\mu}|&\text{if $s=j$.}
\end{cases}
$$
Thus by
\ref{double coset} and \eqref{conjugate intersection} we have
\[
\begin{split}
&\qquad [k\afE_{h,j}+\diag(\la-k\afbse_j)]_1[A]_1(\ul{\frak S_\mu})\\
&=\prod_{1\leq s\leq n\atop t\in\mbz}\frac{1}{a_{s,t}!}\sum_{w\in\fS_\la\cap\afmsD_\bt}\prod_{1\leq s\leq n\atop t\in\mbz}b_{s,t}^{(w)}!\ul{\frak
S_{\la+k\afbse_h-k\afbse_j}  u_{a,m,h,k}^\la w d {\frak S_\mu}}\\
&= \sum_{w\in\fS_\la\cap\afmsD_\bt}\prod_{1\leq s\leq n\atop t\in\mbz}\frac{b_{s,t}^{(w)}!}{a_{s,t}!}[B^{(w)}]_1(\ul{\fS_\mu})\\
&=\sum_{\dt\in\La(\infty,k)} \sum_{w\in\fS_\la\cap\afmsD_\bt
\atop\dt_t=|w^{-1}(Y_{h,a,m})\cap dR_t^\mu|,\,\forall t}\prod_{1\leq s\leq n\atop t\in\mbz}\frac{b_{s,t}^{(w)}!}{a_{s,t}!}[B^{(w)}]_1(\ul{\fS_\mu})\\
&=\sum_{\dt\in\La(\infty,k)\atop a_{j,t}\geq\dt_t,\,\forall t}|\mpX_\dt|\prod_{t\in\mbz}\frac{(a_{h,t}+\dt_t)!
(a_{j,t}-\dt_t)!}{a_{h,t}!a_{j,t}!}
\bigg[A+\sum_{t\in\mbz}\dt_t(\afE_{h,t}-\afE_{j,t})\bigg]_1
(\ul{\fS_\mu}),
\end{split}
\]
where $\mpX_\dt=\{w\in\fS_\la\cap\msD_\bt\mid\dt_t=|w^{-1}(Y_{h,a,m})\cap
dR_t^\mu|,\,\forall t\in\mbz\}$. For $\dt\in\La(\infty,k)$
let $\mpY_\dt=\{Y\mid Y\han R_j^\la,\,|Y|=k,\,|Y\cap dR_t^\mu|=\dt_t,\,\forall t\in\mbz\}$ and $\mpZ_\dt=\{(Z_t)_{t\in\mbz}\mid |Z_t|=\dt_t,\,Z_t\han
R_j^\la\cap dR_t^\mu,\,\forall t\in\mbz\}$.
Define
$$\begin{array}{rl}
g_\dt:\mpX_\dt&\lra \mpY_\dt,\\
 w&\longmapsto w^{-1}(Y_{h,a,m}),
 \end{array}
 \quad
\begin{array}{rl}
h_\dt:\mpY_\dt&\lra \mpZ_\dt,\\
Y&\longmapsto (Y\cap dR_t^\mu)_{t\in\mbz}.
 \end{array}
 $$
Then the maps $g_\dt$ and $h_\dt$ are all bijective (cf. \cite[Lem.~5.2]{Fu12}).
It follows that $|\mpX_\dt|=|\mpY_\dt|=|\mpZ_\dt|=\prod_{t\in\mbz}\big({a_{j,t}\atop\dt_t}\big)$.
Consequently, we have
\[
\begin{split}
&\qquad[k\afE_{h,j}+\diag(\la-k\afbse_j)]_1[A]_1\\
&=\sum_{\dt\in\La(\infty,k)\atop a_{j,t}\geq\dt_t,\,\forall t}\prod_{t\in\mbz}\left({a_{j,t}\atop\dt_t}\right)\frac{(a_{h,t}+\dt_t)!
(a_{j,t}-\dt_t)!}{a_{h,t}!a_{j,t}!}
\bigg[A+\sum_{t\in\mbz}\dt_t(\afE_{h,t}-\afE_{j,t})\bigg]_1\\
&=\sum_{\dt\in\La(\infty,k)\atop a_{j,t}\geq\dt_t,\,\forall t}\prod_{t\in\mbz}\left({a_{h,t}+\dt_t
\atop\dt_t}\right)
\bigg[A+\sum_{t\in\mbz}\dt_t(\afE_{h,t}-\afE_{j,t})\bigg]_1
\end{split}
\]
as required.
\end{proof}

\begin{Prop}\label{formula2}
Let $k\geq 1$, $1\leq h\leq n$ and $m\not=0\in\mbz$. Assume $A\in\afThnr$ and $\la=\ro(A)\geq k\afbse_h$. Then the following identity holds in $\afSrmbz:$
\[
\begin{split}
&\qquad[k\afE_{h,h+mn}+\diag(\la-k\afbse_h)]_1\cdot[A]_1\\
&=\sum_{\dt\in\La(\infty,k)\atop a_{h+mn,t}-\dt_t+\dt_{t-mn}\geq 0,\,\forall t}\prod_{t\in\mbz}\left({a_{h,t}+\dt_t-\dt_{t+mn}\atop\dt_t}\right)
\bigg[A+\sum_{t\in\mbz}\dt_t(\afE_{h,t}-\afE_{h+mn,t})\bigg]_1.
\end{split}
\]
\end{Prop}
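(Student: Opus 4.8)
The plan is to follow the proof of \ref{formula1} almost verbatim; the only genuinely new ingredient is a small combinatorial identity forced by the coincidence of residues $\bar h=\overline{h+mn}$. Write $j=h+mn$. Since $1\le h\le n$, writing $j=a+mn$ with $1\le a\le n$ forces $a=h$, and moreover $\afbse_h=\afbse_j$ in $\afmbnn$, so $\la+k\afbse_h-k\afbse_j=\la$. Hence by \ref{power of e_i}(1) the element $u:=u^\la_{h,m,h,k}$ lies in $\afmsD_{\la,\la}$ with $\jmath_\vtg(\la,u,\la)=k\afE_{h,h+mn}+\diag(\la-k\afbse_h)$, and by \ref{power of e_i}(2) we have $\fS_\bt=u^{-1}\fS_\la u\cap\fS_\la$ for the composition $\bt$ displayed there; whether $m>0$ or $m<0$ only affects which of the two middle cases of $u^\la_{a,m,h,k}$ is in force and the precise shape of $\bt$, and I would run both in parallel. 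Choosing $\mu=\co(A)$ and $d\in\afmsD_{\la,\mu}$ with $\jmath_\vtg(\la,d,\mu)=A$, the same chain of manipulations using \ref{double coset}, \eqref{conjugate intersection} and \ref{power of e_i} as in \ref{formula1} reduces $[k\afE_{h,h+mn}+\diag(\la-k\afbse_h)]_1[A]_1(\ul{\fS_\mu})$ first to $\prod_{1\le s\le n,\,t\in\mbz}\tfrac1{a_{s,t}!}\,\ul{\fS_\la}\,u\,\ul{\afmsD_\bt\cap\fS_\la}\,d\,\ul{\fS_\mu}$ and then to a sum over $w\in\fS_\la\cap\afmsD_\bt$ of matrices $B^{(w)}=(b^{(w)}_{s,t})$ with $b^{(w)}_{s,t}=|w^{-1}u^{-1}R_s^\la\cap dR_t^\mu|$.

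The crux is the determination of $u^{-1}R_s^\la$, and this is where the present case diverges from \ref{formula1}. Reading off the explicit form of $u^\la_{h,m,h,k}$ (the case $a=h$), one finds $u^{-1}R_s^\la=R_s^\la$ whenever $\bar s\ne\bar h$, whereas
$$u^{-1}R_h^\la=(R_h^\la\setminus(Y-mr))\cup Y,$$
where $Y$ is the set of the first $k$ (when $m>0$), resp.\ the last $k$ (when $m<0$), elements of $R_{h+mn}^\la$, so that $Y-mr$ is the corresponding $k$-element subset of $R_h^\la$. In \ref{formula1} the map $u^{-1}$ merely transferred a $k$-element set from the block $R_j^\la$ into $R_h^\la$; here $R_{h+mn}^\la$ is again a block distinct from $R_h^\la$, but the two blocks share the same residue modulo $n$, so $u^{-1}$ at once deletes a $k$-subset of row $h$ and inserts another into it. Setting $\dt_t=|w^{-1}(Y)\cap dR_t^\mu|$ and using that $w\in\affSr$ and $d\in\afmsD_{\la,\mu}$ intertwine the shift by $mr$ (so $dR_{t+mn}^\mu=dR_t^\mu+mr$), one gets $|w^{-1}(Y-mr)\cap dR_t^\mu|=\dt_{t+mn}$, whence $b^{(w)}_{h,t}=a_{h,t}+\dt_t-\dt_{t+mn}$; translating by $mn$ gives $b^{(w)}_{h+mn,t}=a_{h+mn,t}+\dt_{t-mn}-\dt_t$, that is, $B^{(w)}=A+\sum_t\dt_t(\afE_{h,t}-\afE_{h+mn,t})$.

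It remains to count, exactly as in \ref{formula1}: the assignments $w\mapsto w^{-1}(Y)$ and $Y'\mapsto(Y'\cap dR_t^\mu)_{t\in\mbz}$ yield bijections $\mpX_\dt\cong\mpY_\dt\cong\mpZ_\dt$ (cf.\ \cite[Lem.~5.2]{Fu12}), where $\mpZ_\dt$ now parametrises tuples of $\dt_t$-element subsets of $R_{h+mn}^\la\cap dR_t^\mu$; since $|R_{h+mn}^\la\cap dR_t^\mu|=a_{h+mn,t}$ this gives $|\mpX_\dt|=\prod_t\binom{a_{h+mn,t}}{\dt_t}$. Summing over $w$, the coefficient of $\big[A+\sum_t\dt_t(\afE_{h,t}-\afE_{h+mn,t})\big]_1$ is
$$\prod_t\binom{a_{h+mn,t}}{\dt_t}\;\prod_t\frac{(a_{h,t}+\dt_t-\dt_{t+mn})!}{a_{h,t}!}.$$
The short but genuinely new step is to recognise this product as $\prod_t\binom{a_{h,t}+\dt_t-\dt_{t+mn}}{\dt_t}$: using $a_{h+mn,t}=a_{h,t-mn}$ and reindexing the first product by $t\mapsto t+mn$ — a bijection of $\mbz$, so that also $\prod_t\dt_{t+mn}!=\prod_t\dt_t!$ — the two products merge and the factorials collapse. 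Finally, a nonzero factor $\binom{a_{h,t}+\dt_t-\dt_{t+mn}}{\dt_t}$ forces $\dt_{t+mn}\le a_{h,t}$, equivalently $\dt_t\le a_{h+mn,t}$, which makes $a_{h+mn,t}-\dt_t+\dt_{t-mn}\ge 0$ automatic; so the surviving terms are precisely those $\dt\in\La(\infty,k)$ admitted in the statement. The main obstacle I anticipate is the careful determination of $u^{-1}R_h^\la$ — getting the $m>0$ and $m<0$ bookkeeping of the permutation right — together with the reindexing identity above; everything else transcribes the proof of \ref{formula1}.
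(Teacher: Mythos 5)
Your proposal is correct and follows essentially the same route as the paper: both take $a=h$ in $u^\la_{a,m,h,k}$, observe that $\afbse_h=\afbse_{h+mn}$ collapses $\la+k\afbse_h-k\afbse_j$ to $\la$, compute $(u^\la_{h,m,h,k})^{-1}R^\la_h=(R^\la_h\setminus(Y_m-mr))\cup Y_m$, set $\dt_t=|w^{-1}(Y_m)\cap dR^\mu_t|$ (with the $r$-periodicity of $w$ and $d$ giving $|w^{-1}(Y_m-mr)\cap dR^\mu_t|=\dt_{t+mn}$), count $|\mpX_\dt|=\prod_t\binom{a_{h+mn,t}}{\dt_t}$ via the bijection to $\mpZ_\dt$, and merge the two products by reindexing $t\mapsto t+mn$. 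You spell out a few steps the paper leaves implicit — the periodicity argument behind $\dt_{t+mn}$ and the $\prod_t\dt_{t+mn}!=\prod_t\dt_t!$ reindexing — but the argument is the same; your closing remark that the nonvanishing of $\binom{a_{h,t}+\dt_t-\dt_{t+mn}}{\dt_t}$ renders the admissibility constraint automatic is a correct aside, though the constraint is still needed in the statement to make the bracket and binomial well defined in the first place.
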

\begin{proof}
Let $u_{m,h,k}^\la=u_{h,m,h,k}^\la$.
Assume $\mu=\co(A)$ and $d\in\msD^\vtg_{\la,\mu}$ such that
$\jmath_\vtg(\la, d,\mu)=A$.
Let $j=h+mn$. Then by
\ref{double coset}, \eqref{conjugate intersection} and
\ref{power of e_i} we have
\[
\begin{split}
[k\afE_{h,j}+\diag(\la-k\afbse_j)]_1[A]_1(\ul{\frak S_\mu})
&=\prod_{1\leq s\leq n\atop t\in\mbz}\frac{1}{a_{s,t}!}\ul{\frak
S_{\la}}\cdot u_{m,h,k}^\la\cdot\ul{\msD^\vtg_\beta\cap\frak
S_\la}\cdot d\cdot\ul{\frak S_\mu},
\end{split}
\]
where
$\bt$ is as in \ref{power of e_i}(2).

For $w\in\afmsD_\bt\cap\frak S_\la$ let
$B^{(w)}=(b_{i,j}^{(w)})\in\afThnr$, where
$b_{s,t}^{(w)}=|R_s^{\la}\cap u_{m,h,k}^\la wd R_t^\mu|
=|w^{-1}(u_{m,h,k}^\la)^{-1}R_s^{\la}\cap dR_t^\mu|$.
Then we have
$$
(u_{m,h,k}^\la)^{-1}R_s^{\la}=
\begin{cases}
R_s^\la&\text{if $1\leq s\leq n$ and $ s\not= h$;} \\
(R_h^\la\backslash(Y_{m}-mr))\cup Y_m&\text{if $s=h$,} \\
\end{cases}
$$
where
$$Y_{m}
=\begin{cases}
\{\la_{0,h-1}+1+mr,\cdots,\la_{0,h-1}+k+mr\}\! \!&\text{if $m>0$}\\
\{\la_{0,h}-k+1+mr,\cdots,\la_{0,h}+mr\}\!\!&\text{if $m<0$}.
\end{cases}$$
It follows that
$$
b_{s,t}^{(w)}=
\begin{cases}
a_{s,t} &\text{if $1\leq s\leq n$ and $ s\not= h $;}\\
a_{h,t}-|w^{-1}(Y_{m}-mr)\cap dR_t^{\mu}|+
|w^{-1}(Y_{m})\cap dR_t^{\mu}|&\text{if $s=h$.}
\end{cases}
$$
This together with
\ref{double coset} and \eqref{conjugate intersection} gives
\[
\begin{split}
&\qquad[k\afE_{h,j}+\diag(\la-k\afbse_j)]_1[A]_1(\ul{\frak S_\mu})\\
&= \sum_{w\in\fS_\la\cap\afmsD_\bt}\prod_{1\leq s\leq n\atop t\in\mbz}\frac{b_{s,t}^{(w)}!}{a_{s,t}!}[B^{(w)}]_1(\ul{\fS_\mu})\\
&=\sum_{\dt\in\La(\infty,k)
\atop a_{j,t}-\dt_t+\dt_{t-mn}\geq 0,\,\forall t}|\mpX_\dt|\prod_{t\in\mbz}
\frac{(a_{h,t}+\dt_t-\dt_{t+mn})!
}{a_{h,t}!}
\bigg[A+\sum_{t\in\mbz}\dt_t(\afE_{h,t}-\afE_{j,t})\bigg]_1
(\ul{\fS_\mu}),
\end{split}
\]
where $\mpX_\dt=\{w\in\fS_\la\cap\msD_\bt\mid\dt_t=|w^{-1}(Y_{m})\cap
dR_t^\mu|,\,\forall t\in\mbz\}$. For $\dt\in\La(\infty,k)$
there is a bijective map $f_\dt:\mpX_\dt\ra\mpZ_\dt$
defined by sending $w$ to $(w^{-1}(Y_m)\cap dR_t^\mu)_{t\in\mbz}$,
where $\mpZ_\dt=\{(Z_t)_{t\in\mbz}\mid |Z_t|=\dt_t,\,Z_t\han
R_j^\la\cap dR_t^\mu,\,\forall t\in\mbz\}$
(cf. \cite[Lem.~5.2]{Fu12}).
This implies that $$|\mpX_\dt|=|\mpZ_\dt|=\prod_{t\in\mbz}\bigg({a_{j,t}\atop\dt_t}\bigg)
=\prod_{t\in\mbz}\frac{a_{h,t}!}{\dt_t!(a_{h,t}-\dt_{t+mn})!}.
$$
Hence we have
\[
\begin{split}
&\qquad[k\afE_{h,j}+\diag(\la-k\afbse_j)]_1[A]_1\\
&=\sum_{\dt\in\La(\infty,k)
\atop a_{j,t}-\dt_t+\dt_{t-mn}\geq 0,\,\forall t}\prod_{t\in\mbz}
\left({a_{h,t}+\dt_t-\dt_{t+mn}\atop\dt_t}\right)
\bigg[A+\sum_{t\in\mbz}\dt_t(\afE_{h,t}-\afE_{j,t})\bigg]_1.
\end{split}
\]
The proof is completed.
\end{proof}

Combining \ref{formula1} and \ref{formula2}, we obtain the following
result.
\begin{Prop}\label{formula}
Let $k\geq 1$, and $i,j\in\mbz$ with $i\not=j$. Assume $A\in\afThnr$ and $\la=\ro(A)\geq k\afbse_j$. Then the following identity holds in $\afSrmbz:$
\[
\begin{split}
&\qquad[k\afE_{i,j}+\diag(\la-k\afbse_j)]_1\cdot[A]_1\\
&=\sum_{\dt\in\La(\infty,k)\atop a_{j,t}-\dt_t+\dt_{\bar i,\bar j}\dt_{t+i-j}\geq 0,\,\forall t}\prod_{t\in\mbz}\left({a_{i,t}+\dt_t-\dt_{\bar i,\bar j}\dt_{t+j-i}\atop\dt_t}\right)
\bigg[A+\sum_{t\in\mbz}\dt_t(\afE_{i,t}-\afE_{j,t})\bigg]_1.
\end{split}
\]
\end{Prop}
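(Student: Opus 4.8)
The plan is to recover this unified formula from the two special cases \ref{formula1} and \ref{formula2} already established; the only real work is a normalization of the first row index. First I would note that the truth of the asserted identity is unaffected by the simultaneous shift $(i,j)\mapsto(i+n,j+n)$. On the left-hand side $\afE_{i+n,j+n}=\afE_{i,j}$ and $\afbse_{j+n}=\afbse_j$ (this element depends only on the class of $j$ modulo $n$), so both $[k\afE_{i,j}+\diag(\la-k\afbse_j)]_1$ and the hypothesis $\la=\ro(A)\geq k\afbse_j$ are literally unchanged; on the right-hand side, the periodicities $a_{s,t}=a_{s+n,t+n}$ and $\afE_{s,t}=\afE_{s+n,t+n}$ together with the bijection $\dt\mapsto\dt'$ of $\La(\infty,k)$ given by $\dt'_t=\dt_{t+n}$ carry the $(i+n,j+n)$-sum term by term onto the $(i,j)$-sum. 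Hence it suffices to prove the formula when $1\leq i\leq n$.

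With $i$ so normalized I would split according to whether $\bar i=\bar j$. If $\bar i\neq\bar j$, then $\dt_{\bar i,\bar j}=0$, so the summation constraint $a_{j,t}-\dt_t+\dt_{\bar i,\bar j}\dt_{t+i-j}\geq 0$ collapses to $a_{j,t}\geq\dt_t$ and the coefficient $\binom{a_{i,t}+\dt_t-\dt_{\bar i,\bar j}\dt_{t+j-i}}{\dt_t}$ collapses to $\binom{a_{i,t}+\dt_t}{\dt_t}$; the resulting identity is exactly \ref{formula1} (with $h=i$ in the notation there). If $\bar i=\bar j$, write $j=i+mn$ with $m\neq 0$; then $\dt_{\bar i,\bar j}=1$, $j-i=mn$, and $\afbse_j=\afbse_i$, so $\diag(\la-k\afbse_j)=\diag(\la-k\afbse_i)$ and the hypothesis reads $\la\geq k\afbse_i$, while substituting $\dt_{t+j-i}=\dt_{t+mn}$ and $\dt_{t+i-j}=\dt_{t-mn}$ turns the asserted identity into exactly \ref{formula2} (again with $h=i$ there). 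Together with the normalization above, this proves the proposition.

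I expect the only delicate point to be the covariance used in the first step: one has to check that the re-indexing $\dt'_t=\dt_{t+n}$ is simultaneously compatible with the inequality cutting out the summation range, with the product of binomial coefficients, and with the argument $A+\sum_t\dt_t(\afE_{i,t}-\afE_{j,t})$ of the symbol $[\,\cdot\,]_1$. Once that bookkeeping is in place, the two cases require nothing more than unwinding the Kronecker delta $\dt_{\bar i,\bar j}$, and no further computation inside $\afSrmbz$ is needed.
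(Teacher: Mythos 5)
Your proposal is correct and is exactly the argument the paper intends: Proposition \ref{formula} is stated in the paper with no proof beyond the phrase ``Combining \ref{formula1} and \ref{formula2}, we obtain the following result,'' and your two steps --- reducing to $1\leq i\leq n$ via the shift-covariance under $(i,j)\mapsto(i+n,j+n)$ (checked correctly via the bijection $\dt\mapsto\dt'$ with $\dt'_t=\dt_{t+n}$ of $\La(\infty,k)$), then unwinding the Kronecker delta $\dt_{\bar i,\bar j}$ to recover \ref{formula1} when $\bar i\neq\bar j$ and \ref{formula2} when $\bar i=\bar j$ --- supply precisely the bookkeeping the paper leaves implicit.
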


\section{The algebra $\afuglz$}

Recall from \S2 that $\Hall$ is the Ringel-Hall algebra of $\tri$ over $\sZ$.
Let $\Hall_\mbz=\Hall\ot_\sZ\mbz$, where
$\mbz$ is regarded as a $\sZ$-module by specializing $\up$ to $1$.
Let $\sU_\mbz^+(\afgl)=\th^+(\Hall_\mbz)$ and
$\sU_\mbz^-(\afgl)=\th^-(\Hall_\mbz^{\mathrm{op}})$, where the maps
$\th^+$ and $\th^-$ are defined in \ref{th+,th-}.
For $\la\in\afmbnn$ let
$$\bigg({H\atop\la}\bigg)=\prod_{1\leq i\leq n}\bigg({H_i\atop\la_i}\bigg),$$
where $H_i=\afE_{i,i}$ and
$$\bpa{H_i}{\la_i}=\frac{H_i(H_i-1)\cdots
(H_{i}-\la_i+1)}{\la_i!}.$$
Let $\afuglzz$ be the $\mbz$-submodule of $\afuglq$
spanned by $\big({H\atop\la}\big)$  for  $\la\in\afmbnn$.
Let $\afuglz=\afuglzp\afuglzz\afuglzm.$

For $A\in\afThnp$ let
\begin{equation}
u_{A,1}^+=\th^+(u_{A,1})\in\afuglzp\text{ and }u_{A,1}^-=\th^-(u_{A,1})\in\afuglzm.
\end{equation}
By \cite[Prop.~6.1.4 and Th.~6.1.5]{DDF} we have
\begin{equation}\label{th+uA}
u_{A,1}^+=A\{\bfl\}\text{ and }u_{A,1}^-=(\tA)\{\bfl\},
\end{equation}
where $\tA$ is the transpose matrix of $A$.
The following result is given in \cite[Th.~6.5 and Lem.~6.2]{Fu13}.

\begin{Thm}\label{basis for afuglz}
The $\mbz$-module $\afuglz$ is a $\mbz$-subalgebra of $\afuglq$.
Furthermore, the set $\{u^+_{A,1} \big({H\atop\la}\big)
u^-_{B,1}\mid A,B\in\afThnp,\,\la\in\afmbnn\}$ forms a $\mbz$-basis for $\afuglz$.
\end{Thm}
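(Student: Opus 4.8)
The plan is to establish the two assertions of the theorem separately. I would begin with the basis statement by checking that the three ``halves'' are $\mbz$-subalgebras carrying the expected $\mbz$-bases. The generic Ringel--Hall algebra $\Hall$ is free over $\sZ$ on $\{u_A\}$ with structure constants $\up^{\lan\bfd(A),\bfd(B)\ran}\vi^C_{A,B}(\up^2)\in\sZ$, so $\Hall_\mbz=\Hall\ot_\sZ\mbz$ is a $\mbz$-algebra free on $\{u_{A,1}\}$, and applying the injective algebra homomorphisms $\th^\pm$ of \ref{th+,th-} shows that $\afuglzp$ (resp.\ $\afuglzm$) is a $\mbz$-subalgebra of $\afuglq$, free over $\mbz$ on $\{u^+_{A,1}\}$ (resp.\ $\{u^-_{B,1}\}$). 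That $\afuglzz$ is a subring of $\afuglqz$ follows from the classical integral identity $\binom{H_i}{a}\binom{H_i}{b}=\sum_c\binom{c}{a}\binom{a}{c-b}\binom{H_i}{c}$, and $\{\binom{H}{\la}\mid\la\in\afmbnn\}$ is $\mbq$-linearly independent by inspection of leading terms. Now the triangular decomposition $\afuglq=\afuglqp\ot\afuglqz\ot\afuglqm$ recalled in \S2 makes multiplication a $\mbq$-linear isomorphism, so the set $\{u^+_{A,1}\binom{H}{\la}u^-_{B,1}\mid A,B\in\afThnp,\,\la\in\afmbnn\}$ --- the image of a tensor product of $\mbq$-bases of the three factors --- is $\mbq$-linearly independent; since $\afuglz=\afuglzp\afuglzz\afuglzm$ is by construction $\mbz$-spanned by this set, $\mbq$-linear independence upgrades it to a $\mbz$-basis, which gives the basis assertion.

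It remains to show $\afuglz$ is closed under multiplication. By the above and associativity this reduces to the three inclusions
\[
\afuglzz\cdot\afuglzp\sse\afuglzp\cdot\afuglzz,\qquad\afuglzm\cdot\afuglzz\sse\afuglzz\cdot\afuglzm,\qquad\afuglzm\cdot\afuglzp\sse\afuglz.
\]
The first two I would deduce from the commutation relation $[H_i,\afE_{j,k}]=(\dt_{\bar i,\bar j}-\dt_{\bar i,\bar k})\afE_{j,k}$: a homogeneous element $x$ in the degree-$\bfd$ piece of $\afuglzp$ satisfies $H_ix=x(H_i+c)$ for some integer $c$ depending only on $i$ and $\bfd$, whence $\binom{H_i}{a}x=x\binom{H_i+c}{a}$, and $\binom{H_i+c}{a}$ is a $\mbz$-combination of the $\binom{H_i}{b}$ with $0\le b\le a$ (iterate Pascal's rule). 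Running over $i$ and over the $\mbz$-basis $\{u^+_{A,1}\}$ gives the first inclusion; the negative part is symmetric.

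The heart of the matter is the inclusion $\afuglzm\cdot\afuglzp\sse\afuglz$, i.e.\ $u^-_{B,1}u^+_{A,1}\in\afuglz$ for all $A,B\in\afThnp$. Here I would work through the realization $\xi$ of \ref{realization}. Using the multiplicative structure of $\Hall$ (cf.\ \cite[Ch.~6]{DDF}) one may write $u^-_{B,1}$ as a $\mbz$-linear combination of ordered products of ``elementary'' factors $u^-_{l\afE_{i,j},1}$ with $i<j$, so it suffices to treat $u^-_{l\afE_{j_0,i_0},1}u^+_{A,1}$. By \eqref{th+uA} the image of $u^-_{l\afE_{j_0,i_0},1}$ in $\prod_{r\ge0}\afSrmbz$ is a $\mbz$-combination of standard basis elements $[l\afE_{j_0,i_0}+\diag\nu]_1$, and each product $[l\afE_{j_0,i_0}+\diag\nu]_1\cdot[A+\diag\mu]_1$ occurring is exactly of the shape to which \ref{formula} applies --- this identity being valid over $\mbz$ since $\{[C]_1\}$ is a $\mbz$-basis of $\afSrmbz$. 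Because every binomial coefficient appearing in \ref{formula} is an integer, the outcome is an integral combination of standard basis elements; collecting these over all levels $r$ and re-expanding in the normal-ordered basis $\{u^+_{A',1}\binom{H}{\la}u^-_{B',1}\}$ (and comparing with the $\mbz$-linear independence already established) forces integrality of all coefficients, giving $u^-_{l\afE_{j_0,i_0},1}u^+_{A,1}\in\afuglz$.

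The step I expect to be the main obstacle is precisely this last re-expansion: converting the ``mixed'' matrices produced by \ref{formula} back into the normal-ordered form $\afuglzp\afuglzz\afuglzm$ while keeping all coefficients manifestly in $\mbz$. This requires a triangularity property of the standard basis of $\afSrmbz$ (with respect to a suitable partial order on $\afThnr$) together with careful tracking of the binomial coefficients in \ref{formula1}, \ref{formula2} and \ref{formula}; the delicate point particular to the loop algebra is that the imaginary root vectors $\afE_{i,i+ln}$ ($l\ne0$) lie in $\afuglzp\cup\afuglzm$ rather than in the Cartan, so one must verify that their divided powers interact integrally with the $\binom{H}{\la}$. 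All of this is carried out in \cite[Th.~6.5 and Lem.~6.2]{Fu13}.
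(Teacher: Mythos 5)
The paper does not prove this theorem: it is stated with the attribution ``The following result is given in \cite[Th.~6.5 and Lem.~6.2]{Fu13},'' so there is no internal argument to compare against, and your proposal correctly recognizes this by terminating at the same citation.

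Your sketch is consistent with the structure of the cited proof: reduce to three closure inclusions via the triangular decomposition $\afuglq=\afuglqp\ot\afuglqz\ot\afuglqm$; handle $\afuglzz\cdot\afuglzp\sse\afuglzp\cdot\afuglzz$ and its mirror by the commutation $H_i\,x=x(H_i+c)$ for homogeneous $x$ and the integrality of $\big({H_i+c\atop a}\big)$ as a $\mbz$-combination of $\big({H_i\atop b}\big)$; then attack $\afuglzm\cdot\afuglzp\sse\afuglz$ through the embedding $\xi$ into $\prod_{r\ge 0}\afSrmbz$ and the multiplication formulas. One warning about phrasing, though: the claim that having $\xi_r(x)\in\afSrmbz$ for every $r$ ``forces integrality of all coefficients'' of $x$ in the PBW basis is not automatic --- a $\mbq$-combination of elements of a free $\mbz$-module can land back in the $\mbz$-module without having integral coefficients, unless the images of the basis vectors span a $\mbz$-direct summand. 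What actually rescues the argument is the unitriangular change of basis between $u^+_{A',1}\big({H\atop\la}\big)u^-_{B',1}$ and the $A''\{\dt\}$'s (exactly as in Prop.~\ref{tri for affine gln}) combined with the $\mbz$-linear independence of $(A\{\la,r\})_{r\ge0}$ established as in Th.~\ref{injective}. You do flag ``a triangularity property'' as the main obstacle, so this is a matter of emphasis rather than a fatal gap, but it is the load-bearing step and should not be presented as a mere bookkeeping consequence of integral coefficients in Prop.~\ref{formula}.
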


For $A\in\afThnpm$, $\la\in\afmbnn$ let
\begin{equation*}
\begin{split}
A\br{\la,r}&=\sum_{\mu\in\afLa(n,r-\sg(A))}\bigg({\mu\atop\la}\bigg)
[A+\diag(\mu)]_1\in\afSrmbq,
\end{split}
\end{equation*}
where
$\big({\mu\atop\la}\big)=
\big({\mu_1\atop\la_1}\big)\cdots\big({\mu_n\atop\la_n}\big).$
Also, for $A\in\afMnz$, we set
$A\br{\la,r}=0$ if $a_{i,j}<0$ for some $i\not=j$.
Let $$A\{\la\}=(A\{\la,r\})_{r\geq 0}\in\prod_{r\geq 0}\afSrmbq.$$
Clearly, we have
\begin{equation}\label{prod(Hi lai)}
0\{\la\}=\bigg({H\atop\la}\bigg)\in\afuglz,
\end{equation}
for $\la\in\afmbnn$.

According to \cite[Prop.~7.3(1)]{Fu12} we have the following result (cf. \cite[Lem.~3.4]{Fu15a}).
\begin{Lem}\label{0mu Ala}
Let $A\in\afThnpm$ and $\la,\mu\in\afmbnn$. Then we have the
following formula in $\prod_{r\geq 0}\afSrmbq:$
$$\bigg({H\atop\mu}\bigg)  A\br\la=\sum_{\dt\in\afmbnn\atop\dt\leq\mu}
\bigg({\dt+\la\atop\la}\bigg)
\bigg(
\sum_{\bt\in\afmbnn\atop\bt\leq\mu-\dt,\,\bt\leq\la}
\bigg({\ro(A)\atop\mu-\bt-\dt}
\bigg)\bigg({\la\atop\bt}\bigg)\bigg)A\br{\la+\dt}.$$
\end{Lem}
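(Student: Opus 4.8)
The plan is to establish the identity in each graded piece $\afSrmbq$ and then assemble the components into the claimed formula in $\prod_{r\geq 0}\afSrmbq$. Fix $r\geq 0$ and work in $\afSrmbq$. The left-hand side is $\big({H\atop\mu}\big)A\{\la,r\}$, which by definition equals $\sum_{\nu}\big({\nu\atop\mu}\big)[\diag(\nu)]_1\cdot\sum_{\rho\in\afLa(n,r-\sg(A))}\big({\rho\atop\la}\big)[A+\diag(\rho)]_1$, where $\nu$ ranges over $\afLanr$. Since $[\diag(\nu)]_1$ is the identity on the $\nu$-weight space and kills the others, and $[A+\diag(\rho)]_1$ has row sum $\ro(A+\diag(\rho))=\ro(A)+\rho$, only the term $\nu=\ro(A)+\rho$ survives in the product with a given $[A+\diag(\rho)]_1$. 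Hence the left-hand side collapses to $\sum_{\rho\in\afLa(n,r-\sg(A))}\big({\ro(A)+\rho\atop\mu}\big)\big({\rho\atop\la}\big)[A+\diag(\rho)]_1$.

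The heart of the argument is then a purely combinatorial identity on binomial coefficients: for each $i$ we must expand $\binom{(\ro(A))_i+\rho_i}{\mu_i}\binom{\rho_i}{\la_i}$ as a $\mbz$-linear combination of terms of the form $\binom{\rho_i}{\la_i+\dt_i}$ with coefficients independent of $\rho_i$. The relevant tool is the Vandermonde-type identity $\binom{\rho}{\la}\binom{c+\rho}{\mu}=\sum_{\dt,\bt}\binom{\dt+\la}{\la}\binom{c}{\mu-\bt-\dt}\binom{\la}{\bt}\binom{\rho}{\la+\dt}$, valid for nonnegative integers, where $\dt$ ranges over $0\leq\dt\leq\mu$ and $\bt$ over $0\leq\bt\leq\mu-\dt$ and $\bt\leq\la$. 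One way I would verify this is to write $\binom{c+\rho}{\mu}=\sum_{\bt+\gamma=\mu}\binom{\rho-\la}{\gamma}\binom{c+\la}{\bt}$ by Vandermonde, absorb $\binom{\rho}{\la}\binom{\rho-\la}{\gamma}=\binom{\rho}{\la+\gamma}\binom{\la+\gamma}{\la}$, relabel $\gamma=\dt$, then expand $\binom{c+\la}{\bt}=\sum_{\bt'+\bt''=\bt}\binom{c}{\bt'}\binom{\la}{\bt''}$ and $\binom{\la+\dt}{\la}$ appropriately; after regrouping the sum over $\bt$ and $\dt$ and using $\binom{c}{\mu-\bt-\dt}$ one recovers exactly the stated right-hand side. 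This check is the main obstacle, but it is entirely elementary and a minor variant already appears in \cite[Prop.~7.3(1)]{Fu12} and \cite[Lem.~3.4]{Fu15a}, so I would either cite those directly or reproduce the short computation.

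Taking the product over $1\leq i\leq n$ of these per-coordinate expansions, and noting that $\big({\dt+\la\atop\la}\big)$, $\big({\ro(A)\atop\mu-\bt-\dt}\big)$, $\big({\la\atop\bt}\big)$ are by definition the corresponding products of one-variable binomial coefficients, we obtain
\[
\bigg({\ro(A)+\rho\atop\mu}\bigg)\bigg({\rho\atop\la}\bigg)
=\sum_{\dt\leq\mu}\bigg({\dt+\la\atop\la}\bigg)
\Bigg(\sum_{\bt\leq\mu-\dt,\,\bt\leq\la}\bigg({\ro(A)\atop\mu-\bt-\dt}\bigg)\bigg({\la\atop\bt}\bigg)\Bigg)
\bigg({\rho\atop\la+\dt}\bigg),
\]
where $\dt,\bt$ range over $\afmbnn$. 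Substituting this into the collapsed left-hand side and exchanging the order of summation, the inner sum over $\rho\in\afLa(n,r-\sg(A))$ of $\big({\rho\atop\la+\dt}\big)[A+\diag(\rho)]_1$ is precisely $A\{\la+\dt,r\}$ (note $\sg(A)$ is unchanged since $A\in\afThnpm$). This yields the desired identity in $\afSrmbq$ for every $r$, and since all bookkeeping is uniform in $r$ the identity holds in $\prod_{r\geq 0}\afSrmbq$ as stated. I would remark that the finiteness of the $\dt$-sum is automatic from $\dt\leq\mu$, and that the $\bt$-sum is finite for the same reason, so no convergence issue arises.
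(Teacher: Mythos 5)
Your proof is correct and supplies exactly the computation the paper outsources to \cite[Prop.~7.3(1)]{Fu12}: collapse $\big({H\atop\mu}\big)A\{\la,r\}$ using that $[\diag(\nu)]_1[A+\diag(\rho)]_1$ survives only for $\nu=\ro(A)+\rho$, reduce to a per-coordinate binomial identity, and verify that identity via two applications of Vandermonde together with the trinomial revision $\binom{\rho}{\la}\binom{\rho-\la}{\dt}=\binom{\rho}{\la+\dt}\binom{\la+\dt}{\la}$. The phrase ``expand $\binom{\la+\dt}{\la}$ appropriately'' is a harmless slip (that factor is simply carried along, not expanded), but the argument is sound and matches the cited source.
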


By \ref{realization} we know that $A\{\bfl\}=A[\bfl]\in\afuglq$.
Furthermore, from \ref{0mu Ala} we see that
\begin{equation}\label{0la A0}
\bigg({H\atop\la}\bigg)  A\{\bfl\}=A\{\la\}+\sum_{\dt\in\afmbnn\atop\dt<\la}
\bigg({\ro(A)\atop\la-\dt}\bigg)A\{\dt\}.
\end{equation}
for $A\in\afThnpm$ and $\la\in\afmbnn$. Consequently, we conclude that
$A\{\la\}\in\afuglq$ for all $A,\la$. We will see in
\ref{basis2 for afuglz} that $A\{\la\}$ belongs to $\afuglz$ for all $A,\la$.

We now use \ref{formula} to establish the following multiplication formulas in
$\afuglq$.

\begin{Prop}\label{fundamental formula}
Let $k\geq 1$ and $i,j\in\mbz$ with $i\not=j$. Assume $A\in\afThnpm$ and $\la\in\afmbnn$. The following multiplication formulas hold in $\afuglq:$
\[
\begin{split}
(k\afE_{i,j})\{\bfl\}A\{\la\}&=\sum_{\al\in\La(\infty,k)\atop \dt\in\afmbnn,\,\dt\leq\la}
a(\al,\dt)A^{(\al)}\{\al_i\afbse_i+\dt\},
\end{split}
\]
where $A^{(\al)}=A+\sum_{t\not=i,\,t\in\mbz}\al_t\afE_{i,t}
-\sum_{t\not=j,\,t\in\mbz}\al_t\afE_{j,t}$ and
$$a(\al,\dt)=\bigg({\dt_i+\al_i\atop\al_i}\bigg)\prod_{t\not=i\atop t\in\mbz}
\left({a_{i,t}+\al_t-\dt_{\bar j,\bar i}\al_{t+j-i}\atop\al_t}\right)
\sum_{0\leq b\leq\min\{\al_i,\la_i-\dt_i\}}\bigg(
{\al_j\afbse_j-\al_i\afbse_i\atop\la-\dt-b\afbse_i}\bigg)
\bigg({\al_i\atop b}\bigg).$$
\end{Prop}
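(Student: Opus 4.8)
The plan is to translate the computation into the affine Schur algebra $\afSrmbq$, apply the multiplication formula of \ref{formula} componentwise, and then re-expand the result in the basis $\{A\{\la\}\mid A\in\afThnpm,\la\in\afmbnn\}$ of $\afuglq$ using \ref{0la A0} and \ref{0mu Ala}. First I would note that, by \eqref{prod(Hi lai)} and \eqref{0la A0}, the element $A\{\la\}$ equals $\bigl({H\atop\la}\bigr)A\{\bfl\}$ minus a $\mbz$-combination of $A\{\dt\}$ with $\dt<\la$; and $(k\afE_{i,j})\{\bfl\}=u_{A^+,1}^{\pm}$ lies in $\afuglz$ and acts on the $r$-th component as $[k\afE_{i,j}+\diag(\ro(A'))]_1$ for the appropriate $A'$. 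So the $r$-graded piece of the product $(k\afE_{i,j})\{\bfl\}A\{\la\}$ is a sum over $\mu\in\afLa(n,r-\sg(A))$ of $\bigl({\mu\atop\la}\bigr)$ times $[k\afE_{i,j}+\diag(\nu-k\afbse_j)]_1\cdot[A+\diag(\mu)]_1$, where $\nu=\ro(A)+\mu+k\afbse_j$. Applying \ref{formula} to each such product converts it into a sum over $\dt\in\La(\infty,k)$ of binomial coefficients times $[A+\diag(\mu)+\sum_t\dt_t(\afE_{i,t}-\afE_{j,t})]_1$.

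The key bookkeeping step is to separate the diagonal contribution of $\dt$ from the off-diagonal contribution. Writing $\dt=\sum_t\dt_t\afE^{t}$, the term $\dt_i(\afE_{i,i})-\dt_j\ldots$ — more precisely the pieces $\dt_i(\afE_{i,i}-\afE_{j,i})$ and $\dt_{i'}(\afE_{i,i'}-\afE_{j,i'})$ for $i'\equiv i$, together with the $t\equiv j$ terms — modify the diagonal matrix $\diag(\mu)$, whereas the pieces with $t\not\equiv i,j$ contribute to the off-diagonal matrix $A^{(\al)}$ as in the statement (with $\al_t=\dt_t$). Concretely I would set $\al\in\La(\infty,k)$ to record all the entries $\dt_t$ (the off-diagonal ones go into $A^{(\al)}$, and $\al_i$ records the total diagonal shift along row $i\bmod n$). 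After this substitution the matrix part of each surviving term is $A^{(\al)}+\diag(\text{new diagonal})$, and collecting over all $r$ and all $\mu$ with fixed new diagonal class rebuilds an element $A^{(\al)}\{\eta\}$ for suitable $\eta$. The binomial coefficient $\prod_{t\not=i}\bigl({a_{i,t}+\al_t-\dt_{\bar j,\bar i}\al_{t+j-i}\atop\al_t}\bigr)$ comes directly out of \ref{formula}; the extra factor $\bigl({a_{i,i}+\al_i-\ldots\atop\al_i}\bigr)$ from \ref{formula} gets absorbed when one re-expresses the diagonal data, which is exactly where the inner sum over $b$ and the coefficient $\bigl({\dt_i+\al_i\atop\al_i}\bigr)\bigl({\al_j\afbse_j-\al_i\afbse_i\atop\la-\dt-b\afbse_i}\bigr)\bigl({\al_i\atop b}\bigr)$ originates.

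Thus the heart of the proof is the identity, for each fixed $\al$, that summing the $\afSrmbq$-side contributions with the $\bigl({\mu\atop\la}\bigr)$ weights and then re-expanding via \ref{0mu Ala}/\ref{0la A0} yields precisely $\sum_{\dt\le\la}a(\al,\dt)A^{(\al)}\{\al_i\afbse_i+\dt\}$. I would verify this by first handling the case $k=1$, $A=0$, $\la$ arbitrary (where it reduces to a polynomial identity among the $\bigl({H\atop\la}\bigr)$ that follows from \ref{0mu Ala}), and then noting that the general off-diagonal part factors through \ref{formula} cleanly because the off-diagonal entries $a_{i,t}$ ($t\not\equiv i,j$) are unaffected by the diagonal reshuffling. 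The main obstacle is the diagonal combinatorics: one must show that the Vandermonde-type convolution of the $\bigl({\mu\atop\la}\bigr)$, the $\bigl({a_{h,t}+\dt_t\atop\dt_t}\bigr)$ with $t\equiv i$ or $t\equiv j$, and the change-of-basis coefficients from \ref{0la A0} collapse to the stated closed form $a(\al,\dt)$ with the single inner summation index $b=\al_i\wedge(\la_i-\dt_i)$. I expect this to be a careful but purely elementary manipulation of binomial coefficients (Vandermonde's identity applied twice, once for the row-$i$ diagonal entry and once for the row-$j$ diagonal entry), and I would organize it as a separate lemma on products of $\bigl({H\atop\la}\bigr)$-type elements before plugging everything together.
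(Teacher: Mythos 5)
Your overall strategy is the right one and is essentially what the paper does: work in the $r$-th component, apply the multiplication formula \ref{formula} to $[k\afE_{i,j}+\diag(\cdot)]_1\cdot[A+\diag(\mu)]_1$, and then re-express the result in the basis $\{A^{(\al)}\{\eta\}\}$ by binomial convolutions. But two slips in your sketch would derail the bookkeeping if carried through literally. First, for the product to be nonzero one needs $\co(k\afE_{i,j}+\diag(\nu-k\afbse_j))=\ro(A+\diag(\mu))$, i.e.\ $\nu=\ro(A)+\mu$, not $\nu=\ro(A)+\mu+k\afbse_j$ as you wrote. Second, among the correction terms $\dt_t(\afE_{i,t}-\afE_{j,t})$, \emph{only} $t=i$ and $t=j$ touch the diagonal: $\afE_{i,t}$ with $t\equiv i\pmod n$ but $t\ne i$ is a genuine off-diagonal matrix and therefore belongs to $A^{(\al)}$, not to the diagonal part. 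The diagonal shift coming out of \ref{formula} is exactly $\al_i\afbse_i-\al_j\afbse_j$ and nothing else; your description of ``all $t\equiv i$ or $t\equiv j$'' feeding the diagonal would lead you to the wrong $A^{(\al)}$ and the wrong $\eta$.

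More importantly, the proposal stops exactly where the work is. The assertion that the coefficients ``collapse to the stated closed form'' via ``Vandermonde applied twice, once for the row-$i$ diagonal entry and once for the row-$j$ diagonal entry'' is not what happens and is not a proof. In the paper's argument one first reindexes $\nu=\ga+\al_i\afbse_i-\al_j\afbse_j$, which turns the row-$i$ factor from \ref{formula} into $\binom{\nu_i}{\al_i}$ (and in particular absorbs the $\dt_{\bar j,\bar i}\al_j$ term); then one expands $\binom{\nu-\al_i\afbse_i+\al_j\afbse_j}{\la}=\sum_{\bfj\leq\la}\binom{\al_j\afbse_j-\al_i\afbse_i}{\la-\bfj}\binom{\nu}{\bfj}$; and finally, in the \emph{single} coordinate $i$, one applies the trinomial identity $\binom{\nu_i}{j_i}\binom{\nu_i}{\al_i}=\sum_{b}\binom{j_i+\al_i-b}{b,\,j_i-b,\,\al_i-b}\binom{\nu_i}{j_i+\al_i-b}$ and substitutes $\dt=\bfj-b\afbse_i$. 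That last step is what produces the inner sum over $b$, the factor $\binom{\dt_i+\al_i}{\al_i}\binom{\al_i}{b}$, and the range $0\le b\le\min\{\al_i,\la_i-\dt_i\}$. So the convolution happens once for the $\la$-argument and once for the $i$-th coordinate only; there is no parallel manipulation ``for the row-$j$ diagonal entry,'' and a proposal organized around that picture would not close.
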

\begin{proof}
By \ref{formula} we have
\[
\begin{split}
(k\afE_{i,j})\{\bfl,r\}A\{\la,r\}
&=\sum_{\ga\in\afmbzn}\bigg({\ga\atop\la}\bigg)
\sum_{\al\in\La(\infty,k)}\prod_{t\not=i\atop t\in\mbz}
\left({a_{i,t}+\al_t-\dt_{\bar j,\bar i}\al_{t+j-i}\atop\al_t}\right)
\left(\ga_i+\al_i-\dt_{\bar j,\bar i}\al_j\atop\al_i\right)\\
&\qquad\times
\bigg[A^{(\al)}+\diag(\ga+\al_i\afbse_i-\al_j\afbse_j)\bigg]_1\\
&=\sum_{\al\in\La(\infty,k)}\prod_{t\not=i\atop t\in\mbz}
\left({a_{i,t}+\al_t-\dt_{\bar j,\bar i}\al_{t+j-i}\atop\al_t}\right)x_\al,
\end{split}
\]
where
$$x_\al=\sum_{\nu\in\afmbzn}
\bigg({\nu-\al_i\afbse_i+\al_j\afbse_j\atop\la}\bigg)
\bigg({\nu_i\atop\al_i}\bigg)
\bigg[A^{(\al)}+\diag(\nu)\bigg]_1.$$
Since
\[
\begin{split}
&\qquad\bigg({\nu-\al_i\afbse_i+\al_j\afbse_j\atop\la}\bigg)
\bigg({\nu_i\atop\al_i}\bigg)\\&=
\sum_{\bfj\in\afmbnn,\,\bfj\leq\la}
\bigg({\al_j\afbse_j-\al_i\afbse_i\atop\la-\bfj}\bigg)
\bigg({\nu\atop\bfj}\bigg)\bigg({\nu_i\atop\al_i}\bigg)\\
&=\sum_{\bfj\in\afmbnn,\,\bfj\leq\la\atop 0\leq b\leq\min\{j_i,\al_i\}}
\bigg({\al_j\afbse_j-\al_i\afbse_i\atop\la-\bfj}\bigg)
\bigg({j_i+\al_i-b\atop b,\,j_i-b,\,\al_i-b}\bigg)\bigg(
{\nu\atop\bfj+(\al_i-b)\afbse_i}\bigg),
\end{split}
\]
we conclude that
\[
\begin{split}
x_\al
&=\sum_{\bfj\in\afmbnn,\,\bfj\leq\la\atop 0\leq b\leq\min\{j_i,\al_i\}}
\bigg({\al_j\afbse_j-\al_i\afbse_i\atop\la-\bfj}\bigg)
\bigg({j_i+\al_i-b\atop b,\,j_i-b,\,\al_i-b}\bigg)A^{(\al)}\{\bfj+(\al_i-b)\afbse_i,r\}.\\
&=\sum_{\dt\in\afmbnn,\,\dt\leq\la\atop
0\leq b\leq\min\{\la_i-\dt_i,\al_i\}}
\bigg({\al_j\afbse_j-\al_i\afbse_i\atop\la-\dt-b\afbse_i}\bigg)
\bigg({\dt_i+\al_i\atop b,\,\dt_i,\,\al_i-b}\bigg)
A^{(\al)}\{\al_i\afbse_i+\dt,r\}\\
&=\bigg({\dt_i+\al_i\atop\al_i}\bigg)\sum_{\dt\in\afmbnn,\,\dt\leq\la\atop
0\leq b\leq\min\{\la_i-\dt_i,\al_i\}}
\bigg({\al_j\afbse_j-\al_i\afbse_i\atop\la-\dt-b\afbse_i}\bigg)
\bigg({\al_i\atop b}\bigg)
A^{(\al)}\{\al_i\afbse_i+\dt,r\}.
\end{split}
\]
The proposition is proved.
\end{proof}
Let
\begin{equation}\label{L+-}
\sL^+=\{(i,j)\mid 1\leq i\leq n,\ j\in\mbz,\,i<j\}\text{ and } \sL^-=\{(i,j)\mid 1\leq i\leq n,\ j\in\mbz,\,i>j\}.
\end{equation}
Furthermore, let $\sL=\sL^+\cup\sL^-$.
\begin{Coro}\label{tri1}
Assume $A\in\afThnpm$. Then the following triangular relation holds in $\afuglq:$
$$\prod_{(i,j)\in\sL}
(a_{i,j}\afE_{i,j})\{\bfl\}=A\{\bfl\}+\sum_{B\in\afThnpm,\,\dt\in\afmbnn
\atop\sg(B)<\sg(A)}f_{B,\dt,A}B\{\dt\}
$$
where $f_{B,\dt,A}\in\mbz$ and  the products are taken with respect to any fixed total order on $\sL$.
\end{Coro}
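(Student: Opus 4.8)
The plan is to induct on $\sg(A)$, peeling off the leftmost nontrivial factor of the product and using \ref{fundamental formula} to single out the leading term with respect to the filtration of $\afuglq$ by $\sg$, where for $B\in\afThnpm$ and $\dt\in\afmbnn$ the element $B\{\dt\}$ is regarded as having ``degree'' $\sg(B)$. If $A=0$ then both sides are $1=0\{\bfl\}$ and there is nothing to prove. Otherwise $A$ has finitely many nonzero entries, all nonnegative, so the chosen total order on $\sL$ has a least element $(i_0,j_0)$ with $a_{i_0,j_0}\neq0$; every factor $(a_{i,j}\afE_{i,j})\{\bfl\}$ with $(i,j)<(i_0,j_0)$ equals $0\{\bfl\}=\big({H\atop\bfl}\big)=1$, whence
\[
\prod_{(i,j)\in\sL}(a_{i,j}\afE_{i,j})\{\bfl\}=(a_{i_0,j_0}\afE_{i_0,j_0})\{\bfl\}\cdot\!\!\prod_{(i,j)\in\sL\setminus\{(i_0,j_0)\}}\!\!(a_{i,j}\afE_{i,j})\{\bfl\}.
\]
The right-hand product is exactly the left side of the corollary for $A'':=A-a_{i_0,j_0}\afE_{i_0,j_0}\in\afThnpm$ (the omitted $(i_0,j_0)$-factor being $1$), and $\sg(A'')=\sg(A)-a_{i_0,j_0}<\sg(A)$, so the inductive hypothesis yields $\prod_{(i,j)\in\sL\setminus\{(i_0,j_0)\}}(a_{i,j}\afE_{i,j})\{\bfl\}=A''\{\bfl\}+\sum_{\sg(B)<\sg(A'')}f_{B,\dt,A''}B\{\dt\}$ with $B\in\afThnpm$ and $\dt\in\afmbnn$.

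Next I would left-multiply this identity by $(a_{i_0,j_0}\afE_{i_0,j_0})\{\bfl\}$ and expand each summand $(a_{i_0,j_0}\afE_{i_0,j_0})\{\bfl\}B\{\dt\}$ by \ref{fundamental formula}, taken with $k=a_{i_0,j_0}$, $(i,j)=(i_0,j_0)$ and $\la=\dt$. For the ``lower order'' part each resulting term $B^{(\al)}\{\al_{i_0}\afbse_{i_0}+\dt\}$ (with $B^{(\al)}=B+\sum_{t\neq i_0}\al_t\afE_{i_0,t}-\sum_{t\neq j_0}\al_t\afE_{j_0,t}$) still has zero diagonal, hence lies in $\afThnpm$ or makes its term vanish by the convention that $C\{\eta,r\}=0$ when $C$ has a negative off-diagonal entry, and $\sg(B^{(\al)})=\sg(B)+\al_{j_0}-\al_{i_0}\leq\sg(B)+a_{i_0,j_0}<\sg(A'')+a_{i_0,j_0}=\sg(A)$, using $\sum_t\al_t=a_{i_0,j_0}$ and $i_0\neq j_0$; so this contribution is a $\mbz$-combination of terms $C\{\eta\}$ with $\sg(C)<\sg(A)$. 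For the main summand, $\la=\bfl$ forces $\dt=0$, so \ref{fundamental formula} gives $(a_{i_0,j_0}\afE_{i_0,j_0})\{\bfl\}A''\{\bfl\}=\sum_{\al\in\La(\infty,a_{i_0,j_0})}a(\al,0)A''^{(\al)}\{\al_{i_0}\afbse_{i_0}\}$, where again $\sg(A''^{(\al)})=\sg(A'')+\al_{j_0}-\al_{i_0}<\sg(A)$ unless $\al=\al^\circ$, the sequence with $a_{i_0,j_0}$ in position $j_0$ and $0$ elsewhere; for $\al=\al^\circ$ we have $\al^\circ_{i_0}=0$ and $A''^{(\al^\circ)}=A''+a_{i_0,j_0}\afE_{i_0,j_0}=A$, so this summand is $a(\al^\circ,0)A\{\bfl\}$. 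Assembling the pieces gives the asserted triangular relation once $a(\al^\circ,0)=1$ is verified.

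That verification is the only computational point, and is where I expect the (mild) friction. In the formula for $a(\al,\dt)$ in \ref{fundamental formula} specialized at $\al=\al^\circ$, $\dt=0$, $\la=\bfl$: the prefactor is $\binom{0+\al^\circ_{i_0}}{\al^\circ_{i_0}}=\binom{0}{0}=1$ since $\al^\circ_{i_0}=0$; in $\prod_{t\neq i_0}\binom{a''_{i_0,t}+\al^\circ_t-\dt_{\bar j_0,\bar i_0}\al^\circ_{t+j_0-i_0}}{\al^\circ_t}$ every factor with $t\neq j_0$ equals $\binom{\bullet}{0}=1$, and the $t=j_0$ factor is $\binom{a''_{i_0,j_0}+a_{i_0,j_0}-\dt_{\bar j_0,\bar i_0}\al^\circ_{2j_0-i_0}}{a_{i_0,j_0}}$, in which $a''_{i_0,j_0}=0$ because the $(i_0,j_0)$-entry was removed in passing to $A''$, and $\al^\circ_{2j_0-i_0}=0$ because $2j_0-i_0\neq j_0$; so this factor is $\binom{a_{i_0,j_0}}{a_{i_0,j_0}}=1$. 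Finally the inner sum over $b$ collapses to $b=0$ since $\al^\circ_{i_0}=0$, giving $\binom{a_{i_0,j_0}\afbse_{j_0}}{\bfl}\binom{0}{0}=1$. Hence $a(\al^\circ,0)=1$. Here one uses only the elementary fact that, for fixed $i\in\{1,\dots,n\}$, the matrices $\afE_{i,j}$ ($j\in\mbz$) are pairwise distinct, so that deleting the pair $(i_0,j_0)$ genuinely zeroes the $(i_0,j_0)$-entry of $A$; the rest is routine degree bookkeeping rather than a real obstacle.
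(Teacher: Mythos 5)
Your proof is correct and takes essentially the same approach as the paper: both specialize Proposition~\ref{fundamental formula} at $\la=\bfl$ (this is the paper's \eqref{la=0}) and induct on $\sg(A)$, using the degree estimate $\sg(A^{(\al)})\leq\sg(A)+k$, with equality exactly when $\al=k\bse_j$, to isolate the leading term. Your explicit check that $a(\al^\circ,0)=1$ is the same computation the paper records as the coefficient $\bpa{a_{i,j}+k}{k}$, which reduces to $1$ precisely because $A''$ has a zero $(i_0,j_0)$-entry.
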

\begin{proof}
By \ref{fundamental formula} we have
\begin{equation}\label{la=0}
\begin{split}
(k\afE_{i,j})\{\bfl\}A\{\bfl\}&=\sum_{\al\in\La(\infty,k)}
\prod_{t\not=i\atop t\in\mbz}
\left({a_{i,t}+\al_t-\dt_{\bar j,\bar i}\al_{t+j-i}\atop\al_t}\right)A^{(\al)}\{\al_i\afbse_i\}
\end{split}
\end{equation}
where $A^{(\al)}$ is as in \ref{fundamental formula}.
Let $\bse_j=(\dt_{i,j})_{i\in\mbz}\in\La(\infty,1)$. If $\al\in\La(\infty,k)$ and $\al\not=k\bse_j$ then we have $\sg(A^{(\al)})<\sg(A)+k$. Consequently, for $k\geq 1$ and $i\not=j$, we have
\begin{equation}
(k\afE_{i,j})\{\bfl\}A\{\bfl\}
=\bigg({a_{i,j}+k\atop k}\bigg)
(A+k\afE_{i,j})\{\bfl\}+f,
\end{equation}
where $f$ is a $\mbz$-linear combination of $B\{\dt\}$ such that
$B\in\afThnpm$, $\dt\in\afmbnn$ and $\sg(B)<\sg(A)+k$.
Now, induction on $\sg(A)$ yields the result.
\end{proof}

As an application of \ref{fundamental formula}, we obtain the following multiplication formulas in the Ringel--Hall algebra $\Hall_\mbz\cong\afuglzp$ over $\mbz$.

\begin{Prop}\label{formula in Hall algebra}
Let $k\geq 1$ and $i,j\in\mbz$ with $i<j$. Assume $A\in\afThnp$. The following multiplication formulas hold in $\afuglzp:$
\[
\begin{split}
u^+_{k\afE_{i,j},1}u^+_{A,1}
&=\sum_{\al\in\La(\infty,k)\atop\al_t=0,\,\forall t\leq i}\prod_{t\not=i \atop
t\in\mbz}\left({a_{i,t}+\al_t-\dt_{\bar j,\bar i}\al_{t+j-i}\atop\al_t}\right)
u^+_{A^{(\al)},1}
=\bigg({a_{i,j}+k\atop k}\bigg)u^+_{A+k\afE_{i,j},1}+f
\end{split}
\]
where $A^{(\al)}$ is as in \ref{fundamental formula} and $f$ is a $\mbz$-linear combination of $u_{B,1}^+$ such that
$B\in\afThnp$ and $\sg(B)<\sg(A)+k$. A similar result holds for $\afuglzm$.
\end{Prop}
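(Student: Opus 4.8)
The plan is to deduce the formula in $\afuglzp$ from the already-established multiplication formula in $\afuglq$ given in \ref{fundamental formula}, by specializing to the case $\la=\bfl$ and restricting attention to the "top degree" part of the matrices involved, then transporting everything through the isomorphism $\th^+:\Hall_\mbz\to\afuglzp$ via \eqref{th+uA}. The key observation is that for $i<j$ the matrix $k\afE_{i,j}$ lies in $\afThnp$ (since the single nonzero entry is strictly above the diagonal), and likewise any $A\in\afThnp$ sits inside $\afThnpm$, so that $u^+_{k\afE_{i,j},1}=(k\afE_{i,j})\{\bfl\}$ and $u^+_{A,1}=A\{\bfl\}$ by \eqref{th+uA}. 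Hence the product $u^+_{k\afE_{i,j},1}u^+_{A,1}$ is literally the product $(k\afE_{i,j})\{\bfl\}A\{\bfl\}$ computed inside $\afuglq$, and I may quote the special case $\la=\bfl$ of \ref{fundamental formula}, which is recorded as \eqref{la=0} in the proof of \ref{tri1}.

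First I would invoke \eqref{la=0}:
\[
(k\afE_{i,j})\{\bfl\}A\{\bfl\}=\sum_{\al\in\La(\infty,k)}\prod_{t\not=i,\,t\in\mbz}\left({a_{i,t}+\al_t-\dt_{\bar j,\bar i}\al_{t+j-i}\atop\al_t}\right)A^{(\al)}\{\al_i\afbse_i\},
\]
with $A^{(\al)}=A+\sum_{t\not=i}\al_t\afE_{i,t}-\sum_{t\not=j}\al_t\afE_{j,t}$. The second step is to identify exactly which terms survive. Since $A\in\afThnp$ has $a_{s,t}=0$ whenever $s\ge t$, for the relevant index $i$ one has $a_{i,t}=0$ for all $t\le i$; the binomial coefficient $\binom{a_{i,t}+\al_t-\cdots}{\al_t}$ then forces $\al_t=0$ for every $t\le i$ with $t\neq i$ (the subtracted term $\dt_{\bar j,\bar i}\al_{t+j-i}$ is harmless because $i<j$ means $t+j-i>t$, so it does not interfere with the vanishing constraint on small $t$), and the $t=i$ factor is absent from the product. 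Thus the sum collapses to $\al\in\La(\infty,k)$ with $\al_t=0$ for all $t\le i$; in particular $\al_i=0$, so $A^{(\al)}\{\al_i\afbse_i\}=A^{(\al)}\{\bfl\}=u^+_{A^{(\al)},1}$, and moreover each such $A^{(\al)}$ genuinely lies in $\afThnp$ (all mass moves from row $j$ into row $i$ at columns strictly to the right of $i$). This yields the first displayed equality.

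The third and last step is the reduction to the leading term. Writing $\bse_j=(\dt_{t,j})_{t\in\mbz}\in\La(\infty,1)$, the term $\al=k\bse_j$ contributes $\binom{a_{i,j}+k}{k}u^+_{A+k\afE_{i,j},1}$, and for any other $\al\in\La(\infty,k)$ with $\al_t=0$ for $t\le i$ one checks $\sg(A^{(\al)})<\sg(A)+k=\sg(A+k\afE_{i,j})$ — indeed $\sg(A^{(\al)})=\sg(A)+\sum_{t\neq i}\al_t-\sum_{t\neq j}\al_t\le\sg(A)+k$ with equality only when all the $\al$-mass is concentrated at $t=j$ in the first sum and absent at the others. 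Collecting these into $f$ gives the second equality. The statement for $\afuglzm$ follows by applying the anti-automorphism $\th^-$ in place of $\th^+$, using $u^-_{A,1}=(\tA)\{\bfl\}$ from \eqref{th+uA} and the transpose of the above (equivalently, running \ref{fundamental formula} with the roles of rows and columns interchanged). I do not anticipate a serious obstacle here; the only point requiring care is the bookkeeping of which binomial factors vanish, i.e. verifying that the constraint "$a_{i,t}=0$ for $t\le i$" really does kill all terms with $\al_t\neq 0$ for some $t\le i$, and that the shift by $\dt_{\bar j,\bar i}\al_{t+j-i}$ cannot resurrect them when $i<j$.
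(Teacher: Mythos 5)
Your overall plan — specialize \ref{fundamental formula} at $\la=\bfl$ via \eqref{la=0} and \eqref{th+uA}, determine which $\al$ survive, then isolate the top-degree term — is the same as the paper's, and your last step (computing $\sg(A^{(\al)})=\sg(A)+\al_j-\al_i$ and noting equality with $\sg(A)+k$ only at $\al=k\bse_j$) is fine. The gap is in your second step. You assert that since $a_{i,t}=0$ for $t\leq i$, the factor $\binom{a_{i,t}+\al_t-\dt_{\bar j,\bar i}\al_{t+j-i}}{\al_t}$ forces $\al_t=0$. It does not: when $\bar j\neq\bar i$ that factor is $\binom{\al_t}{\al_t}=1$ for every $\al_t$, and when $\bar j=\bar i$ the factor $\binom{\al_t-\al_{t+j-i}}{\al_t}$ still equals $1$ whenever $\al_{t+j-i}=0$. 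Your parenthetical about the $\dt_{\bar j,\bar i}$ term being ``harmless'' points the wrong way — with that term absent the binomial never vanishes at all. (A concrete counterexample: $n=2$, $A=\afE_{1,2}$, $i=1$, $j=2$, $k=1$, $\al=\bse_0$ gives binomial factor $\binom{1}{1}=1$ at $t=0$, yet $\al_0=1\neq 0$.)

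The actual mechanism killing those terms is the paper's convention that $B\{\la,r\}=0$ whenever $B$ has a negative off-diagonal entry, applied to the $j$-th row of $A^{(\al)}$, not the $i$-th. Because $A\in\afThnp$ and $j>i$, one has $a_{j,t}=0$ for $t\leq i$, so $a_{j,t}^{(\al)}=-\al_t+\dt_{\bar i,\bar j}\al_{t+i-j}\geq 0$ forces $\al_t\leq\dt_{\bar i,\bar j}\al_{t+i-j}$. When $\bar i\neq\bar j$ this is immediately $\al_t=0$; when $j=i+mn$ with $m>0$ it yields the chain $\al_t\leq\al_{t-mn}\leq\al_{t-2mn}\leq\cdots$, and the finite support of $\al$ then collapses the chain to $\al_t=0$. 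This second branch is not a cosmetic detail: it is exactly the imaginary-root case $\bar j=\bar i$, which the proposition is meant to cover, and your binomial-based argument silently breaks there. Relatedly, your claim that ``each such $A^{(\al)}$ genuinely lies in $\afThnp$'' is also too strong — for $\al_t>0$ with $i<t<j$ the matrix $A^{(\al)}$ may still acquire a negative $j$-row entry; those terms are understood to contribute $0$ via the same convention.
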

\begin{proof}
Assume
$A^{(\al)}\{\al_i\afbse_i\}\not=0$
for some $\al\in\La(\infty,k)$. Write $A^{(\al)}=(a_{s,t}^{(\al)})_{s,t\in\mbz}$. Then $a_{s,t}^{(\al)}\geq 0$ for all $s,t$. By \eqref{la=0}, it is enough to prove that $\al_t=0$ for $t\leq i$.
Since $A\in\afThnp$ and $j>i$ we have $a_{j,t}=0$ for $t\leq i$.
It follows that
$\dt_{\bar j,\bar i}\al_{t+i-j}-\al_t=a_{j,t}+\dt_{\bar j,\bar i}\al_{t+i-j}-\al_t=a_{j,t}^{(\al)}\geq 0$
for $t\leq i$. Thus, if $\bar j\not=\bar i$ then $\al_t=0$ for $t\leq i$.
Now we assume $j=i+mn$ for some $m>0$. Then $\al_{t-mn}\geq\al_t$ for $t\leq i$. This implies that if $t\leq i$, then there exists some $u\geq 1$ such that $0=\al_{t-umn}\geq\al_t$ and hence $\al_t=0$. The proposition is proved.
\end{proof}

Let
$\afThnm=\{A\in\afThn\mid a_{i,j}=0\text{ for }i\leq j\}.$
For $A\in\aftiThn$, write $A=A^++A^-+\diag(\la)$ with
$A^+\in\afThnp$, $A^-\in\afThnm$ and $\la\in\afmbzn$.
We fix a total order on $\sL^+,\sL^-$ and define for
$A\in\afThnpm$
\begin{equation}\label{MAla}
\begin{split}
\Eap&=\prod_{(i,j)\in\sL^+}u^+_{a_{i,j}\afE_{i,j},1}
=\prod_{(i,j)\in\sL^+}(a_{i,j}\afE_{i,j})\{\bfl\}\in\afuglqp, \\
\Faf&=\prod_{(i,j)\in\sL^-}u^-_{a_{i,j}\afE_{j,i},1}
=\prod_{(i,j)\in\sL^-}(a_{i,j}\afE_{i,j})\{\bfl\}\in\afuglqm.
\end{split}
\end{equation}

\begin{Coro}\label{tri for the postive part of affine gln}
Let $A\in\afThnp$. Then the following triangular relation holds in $\afuglzp:$
$$\Ea=u_{A,1}^++\sum_{B\in\afThnp
\atop\sg(B)<\sg(A)}f_{B,A}u_{B,1}^+
$$
where $f_{B,A}\in\mbz$. In particular,
the set $\{\Ea\mid A\in\afThnp\}$ forms a $\mbz$-basis for $\sU_\mbz^+(\afgl)$. A similar result holds for $\afuglzm$.
\end{Coro}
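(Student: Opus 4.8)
The plan is to deduce the triangular relation for $\Ea$ directly from \ref{formula in Hall algebra} by an induction on $\sg(A)$, mirroring the argument used for \ref{tri1}. Write $A\in\afThnp$ and fix the chosen total order on $\sL^+$; let $(i_0,j_0)$ be the last pair in this order for which $a_{i_0,j_0}\neq 0$, and write $A=A'+a_{i_0,j_0}\afE_{i_0,j_0}$ with $\Ea=\msEap[A']\cdot u^+_{a_{i_0,j_0}\afE_{i_0,j_0},1}$ (up to reordering: one picks off the appropriate outermost factor). Applying \ref{formula in Hall algebra} with $k=a_{i_0,j_0}$ gives $u^+_{a_{i_0,j_0}\afE_{i_0,j_0},1}u^+_{A',1}=\binom{a'_{i_0,j_0}+k}{k}u^+_{A'+k\afE_{i_0,j_0},1}+f$ where $f$ is a $\mbz$-combination of $u^+_{B,1}$ with $\sg(B)<\sg(A')+k=\sg(A)$. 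Since $a'_{i_0,j_0}=0$ (that entry is contributed entirely by the last factor), the binomial coefficient is $1$, so the leading term is exactly $u^+_{A,1}$. By the induction hypothesis applied to $\msEap[A']=u^+_{A',1}+(\text{lower }\sg)$ and the fact that multiplication by a fixed Hall basis element sends the span of $\{u^+_{B,1}:\sg(B)<m\}$ into the span of $\{u^+_{B,1}:\sg(B)<m+k\}$ (again by \ref{formula in Hall algebra}, since each monomial $u^+_{C\afE_{s,t},1}u^+_{B,1}$ only produces basis vectors of $\sg$ at most $\sg(B)+\sg(C\afE_{s,t})$), the cross terms coming from the lower part of $\msEap[A']$ are all $\mbz$-combinations of $u^+_{B,1}$ with $\sg(B)<\sg(A)$. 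Collecting, $\Ea=u^+_{A,1}+\sum_{\sg(B)<\sg(A)}f_{B,A}u^+_{B,1}$ with $f_{B,A}\in\mbz$.

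For the ``in particular'' claim, one observes that $\{u^+_{A,1}\mid A\in\afThnp\}$ is a $\mbz$-basis of $\sU^+_\mbz(\afgl)=\th^+(\Hall_\mbz)$ by definition of the Ringel--Hall algebra (it is free over $\sZ$ on $\{u_A\}$, and $\th^+$ restricted to $\Hall_\mbz$ is injective by \ref{th+,th-} together with the fact that specialization preserves the basis). The transition matrix from $\{\Ea\}$ to $\{u^+_{A,1}\}$, when the index set $\afThnp$ is ordered by any linear refinement of the partial order given by $\sg$, is unitriangular with $\mbz$-entries; hence it is invertible over $\mbz$ and $\{\Ea\mid A\in\afThnp\}$ is also a $\mbz$-basis of $\sU^+_\mbz(\afgl)$. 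The statement for $\afuglzm$ follows by applying the transpose anti-automorphism, or equivalently by running the same argument with $\th^-$ and $\sL^-$ using the ``similar result'' half of \ref{formula in Hall algebra}.

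The only genuinely delicate point is the bookkeeping that identifies the leading coefficient as exactly $1$: one must be careful that when the outermost factor $u^+_{a_{i_0,j_0}\afE_{i_0,j_0},1}$ is multiplied against $u^+_{A',1}$, the relevant entry of $A'$ in position $(i_0,j_0)$ is indeed zero, so that $\binom{a'_{i_0,j_0}+k}{k}=\binom{k}{k}=1$. This is automatic from how $A$ is split, but it relies on peeling off the factors of $\Ea=\prod_{(i,j)\in\sL^+}u^+_{a_{i,j}\afE_{i,j},1}$ in a compatible order; since the corollary allows \emph{any} fixed total order on $\sL^+$, one simply verifies that for whichever order is fixed, the outermost (say, rightmost) factor corresponds to a matrix unit whose position has zero entry in the product of all the other factors — which holds because distinct pairs $(i,j)\in\sL^+$ index distinct off-diagonal positions and each factor $u^+_{a_{i,j}\afE_{i,j},1}=(a_{i,j}\afE_{i,j})\{\bfl\}$ contributes to that single position only at leading order (all lower-$\sg$ corrections being absorbed into the error term $f$). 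I expect this indexing verification to be the main obstacle; everything else is a routine unitriangularity argument.
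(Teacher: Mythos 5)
Your inductive argument driven by \ref{formula in Hall algebra}, together with the observations that the leading binomial coefficient is $\binom{0+k}{k}=1$ and that the resulting transition matrix to the Hall basis $\{u^+_{A,1}\mid A\in\afThnp\}$ is unitriangular with $\mbz$-entries, is correct and is exactly the argument the paper intends (the corollary has no separate proof in the paper; it is presented as a direct consequence of \ref{formula in Hall algebra}). One bookkeeping correction: \ref{formula in Hall algebra} gives the product $u^+_{k\afE_{i,j},1}u^+_{A,1}$ with the single-root factor on the \emph{left}, so you should peel off the \emph{leftmost} factor of $\Ea$ --- write $\Ea=u^+_{a_{i_0,j_0}\afE_{i_0,j_0},1}\cdot E^{(A')}$ with $(i_0,j_0)$ the first pair in the fixed order on $\sL^+$ for which $a_{i_0,j_0}\neq 0$ and $A'=A-a_{i_0,j_0}\afE_{i_0,j_0}$ --- then apply the inductive hypothesis to $E^{(A')}$ and multiply on the left using \ref{formula in Hall algebra}. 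As written, your decomposition $\Ea=E^{(A')}\cdot u^+_{a_{i_0,j_0}\afE_{i_0,j_0},1}$ puts the single-root factor on the right, which \ref{formula in Hall algebra} does not directly cover; peeling from the left removes the issue and the rest of your argument goes through unchanged.
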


We are now ready to generalize \ref{tri1} to a little bit more general setting.  We need the following simple lemma.

\begin{Lem}\label{generalized commute formula}
Let $A_i\in\afThnpm$ ($1\leq i\leq m$) and $\la\in\afmbnn$.
Then we have
$$A_1\{\bfl\}\cdots
A_m\{\bfl\}\bigg({H\atop\la}\bigg)
-\bigg({H\atop\la}\bigg)A_1\{\bfl\}
\cdots A_m\{\bfl\}\in
\mpX^{\la} A_1\{\bfl\}
\cdots A_m\{\bfl\},$$
where $\mpX^{\la}=\spann_\mbz\{ \big({H\atop\dt}\big)\mid \dt\in\afmbnn,\,\dt<\la
\}$.
\end{Lem}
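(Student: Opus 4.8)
The plan is to reduce the general statement to the case $m=1$ by an easy induction, and to prove the case $m=1$ by a direct commutation computation using \ref{0mu Ala}. For the inductive step, suppose the result holds for products of $m-1$ factors. Write $P=A_2\{\bfl\}\cdots A_m\{\bfl\}$. Then
\[
A_1\{\bfl\}P\bigg({H\atop\la}\bigg)-\bigg({H\atop\la}\bigg)A_1\{\bfl\}P
=A_1\{\bfl\}\left(P\bigg({H\atop\la}\bigg)-\bigg({H\atop\la}\bigg)P\right)
+\left(A_1\{\bfl\}\bigg({H\atop\la}\bigg)-\bigg({H\atop\la}\bigg)A_1\{\bfl\}\right)P.
\]
The first summand lies in $A_1\{\bfl\}\,\mpX^\la P$ by the induction hypothesis, and the second lies in $\mpX^\la A_1\{\bfl\}P$ by the base case. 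So it suffices to know that $A_1\{\bfl\}\,\mpX^\la\subseteq \mpX^\la A_1\{\bfl\}$ up to lower terms, i.e. that for $\dt<\la$ one has $A_1\{\bfl\}\big({H\atop\dt}\big)\in\sum_{\bseta\le\dt}\mbz\,\big({H\atop\bseta}\big)A_1\{\bfl\}$; but this is again exactly the base case applied with $\dt$ in place of $\la$ (and the set $\{\bseta\mid\bseta<\la\}$ is closed under passing to smaller elements), so the induction closes.

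Thus the whole statement comes down to the case $m=1$: for $A\in\afThnpm$ and $\la\in\afmbnn$, show
\[
A\{\bfl\}\bigg({H\atop\la}\bigg)-\bigg({H\atop\la}\bigg)A\{\bfl\}\in\mpX^\la A\{\bfl\}.
\]
For this I would invoke \ref{0mu Ala} with $\mu$ there equal to $\la$ and the ``$\la$'' there equal to $\bfl$ (so the binomials $\big({\cdot\atop\bfl}\big)=1$), which gives $\big({H\atop\la}\big)A\{\bfl\}$ as a $\mbz$-combination of the terms $A\{\dt\}$ with $\dt\le\la$, with the coefficient of $A\{\la\}$ equal to $1$. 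Then, using \eqref{0la A0}, each $A\{\dt\}$ with $\dt\le\la$ can be rewritten as $\big({H\atop\dt}\big)A\{\bfl\}$ minus a $\mbz$-combination of $A\{\bseta\}$ with $\bseta<\dt$; iterating (downward induction on $\sum_i\dt_i$) expresses every $A\{\dt\}$ with $\dt\le\la$ as a $\mbz$-combination of $\big({H\atop\bseta\}\big)A\{\bfl\}$ with $\bseta\le\dt\le\la$. Substituting back, $\big({H\atop\la}\big)A\{\bfl\}=\big({H\atop\la}\big)A\{\bfl\}+\sum_{\bseta<\la}c_\bseta\big({H\atop\bseta}\big)A\{\bfl\}$ after the $A\{\la\}$-terms cancel — more precisely one gets that $\big({H\atop\la}\big)A\{\bfl\}-A\{\bfl\}\big({H\atop\la}\big)$, hmm: one computes $A\{\bfl\}\big({H\atop\la}\big)$ by the same device (note $A\{\bfl\}\big({H\atop\la}\big)=A\{\la\}+\sum_{\dt<\la}\binom{\ro(A)}{\la-\dt}A\{\dt\}$ is false in general since $A$ and $\big({H\atop\la}\big)$ need not commute, so instead I use only the one-sided formula \eqref{0la A0} together with the fact that $\big({H\atop\la}\big)$ and $A\{\bfl\}$ differ by lower $\mpX^\la$-terms to pass from one side to the other). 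The cleanest route: from \eqref{0la A0}, $A\{\la\}=\big({H\atop\la}\big)A\{\bfl\}-\sum_{\dt<\la}\binom{\ro(A)}{\la-\dt}A\{\dt\}\in\mpX^{\la'}A\{\bfl\}$ where $\mpX^{\la'}=\spann_\mbz\{\big({H\atop\bseta}\big)\mid\bseta\le\la\}$, by downward induction on $|\la|$; and symmetrically $A\{\la\}\in A\{\bfl\}\mpX^{\la'}$ from the right-handed analogue of \eqref{0la A0} (which follows from \ref{0mu Ala} applied on the other side, or from transpose symmetry). Comparing the two, $\big({H\atop\la}\big)A\{\bfl\}-A\{\bfl\}\big({H\atop\la}\big)$ is a $\mbz$-combination of $\big({H\atop\dt}\big)A\{\bfl\}$ with $\dt<\la$, which is the claim.

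The main obstacle is the bookkeeping: \ref{0mu Ala} is only stated for left multiplication by $\big({H\atop\mu}\big)$, so I must either establish the right-handed version (either by a transpose/antiautomorphism argument — note $A\mapsto\,^tA$ or the bar-type involution swaps $\afuglzp$ and $\afuglzm$ and fixes $\afuglzz$ — or by running the analogous computation in $\afSrmbq$) or else argue purely with the left formula by repeatedly trading $A\{\dt\}$ for $\big({H\atop\dt}\big)A\{\bfl\}$ modulo lower terms via \eqref{0la A0}; I would take the latter, entirely internal route to avoid importing an extra lemma. The triangularity in \eqref{0la A0} (coefficient $1$ on the leading term $A\{\dt\}$, all other terms strictly lower in the partial order $\le$ on $\afmbnn$) is what makes every inversion legitimate, and the finiteness of $\{\dt\in\afmbnn\mid\dt\le\la\}$ guarantees the downward inductions terminate.
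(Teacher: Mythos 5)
Your proposal follows essentially the same route as the paper: induction on $m$ with the decomposition of the commutator into $A_1\{\bfl\}\bigl(P\big({H\atop\la}\big)-\big({H\atop\la}\big)P\bigr)+\bigl(A_1\{\bfl\}\big({H\atop\la}\big)-\big({H\atop\la}\big)A_1\{\bfl\}\bigr)P$, the key reduction to $A_1\{\bfl\}\,\mpX^\la\subseteq\mpX^\la A_1\{\bfl\}$, and a base case built out of the triangular expansion \eqref{0la A0}. The inductive step is fine. A few remarks on the base case, though, since that is where your write-up wobbles.

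First, the parenthetical attempt ``together with the fact that $\big({H\atop\la}\big)$ and $A\{\bfl\}$ differ by lower $\mpX^\la$-terms to pass from one side to the other'' is circular --- that is precisely the statement under proof --- and you rightly abandon it. Second, you cannot in fact avoid a \emph{right-handed} analogue of \ref{0mu Ala}: the paper's own base case uses exactly such a formula, namely \eqref{A0 0la}, in which $A_1\{\bfl\}\big({H\atop\la}\big)=A_1\{\la\}+\sum_{\dt<\la}\binom{\co(A_1)}{\la-\dt}A_1\{\dt\}$; note $\co(A_1)$, not $\ro(A_1)$, so your suspicion that the naive right-handed formula ``is false in general'' is correct as stated, but a corrected version with $\co$ replacing $\ro$ does hold, and both the paper and your ``cleanest route'' depend on it. (The paper attributes \eqref{A0 0la} to \ref{0mu Ala}, which as quoted only covers left multiplication; the right-handed version is presumably the companion part of \cite[Prop.~7.3]{Fu12} or follows from the obvious transpose argument, as you suggest.) Third, your final sentence overstates what the comparison gives: matching the two expansions of $A\{\la\}$ produces the commutator as a $\mbz$-combination of \emph{both} $\big({H\atop\dt}\big)A\{\bfl\}$ and $A\{\bfl\}\big({H\atop\dt}\big)$ with $\dt<\la$, and you still need a short induction on $\la$ (applying the commutator statement for the smaller $\dt$) to move each $A\{\bfl\}$ past the corresponding $\big({H\atop\dt}\big)$. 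The paper sidesteps this by subtracting \eqref{0la A0} from \eqref{A0 0la} directly, which gives the commutator as $\sum_{\dt<\la}\bigl(\binom{\co(A_1)}{\la-\dt}-\binom{\ro(A_1)}{\la-\dt}\bigr)A_1\{\dt\}$, a combination purely of $A_1\{\dt\}$'s, and then converts each such $A_1\{\dt\}$ into $\mpX^\la A_1\{\bfl\}$ via downward induction on $\dt$ using \eqref{0la A0}. I'd recommend adopting that more direct computation; it saves the extra layer of induction at the end and makes the role of $\co$ versus $\ro$ transparent.
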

\begin{proof}
We argue by induction on $m$. If $m=1$ then by \ref{0mu Ala}  we have
\begin{equation}\label{A0 0la}
\begin{split}
 A_1\{\bfl\}\bigg({H\atop\la}\bigg)&=A_1\{\la\}+\sum_{\dt\in\afmbnn\atop\dt<\la}
\bigg({\co(A_1)\atop\la-\dt}\bigg)A_1\{\dt\}.
\end{split}
\end{equation}
This together with \eqref{0la A0} implies that
\begin{equation}\label{commute formula}
\begin{split}
 A_1\{\bfl\}\bigg({H\atop\la}\bigg)-\bigg({H\atop\la}\bigg)A_1\{\bfl\}&=\sum_{\dt\in\afmbnn\atop\dt<\la}
\bigg(\bigg({\co(A_1)\atop\la-\dt}\bigg)-\bigg({\ro(A_1)\atop\la-\dt}
\bigg)\bigg)A_1\{\dt\}\in\mpX^{\la}A_1\{\bfl\}
\end{split}
\end{equation}
Assume now that $m>1$.
By the inductive hypothesis we have
\[
\begin{split}
A_1\{\bfl\}A_2\{\bfl\}\cdots A_m\{\bfl\}\bigg({H\atop\la}\bigg)
-A_1\{\bfl\}\bigg({H\atop\la}\bigg)A_2\{\bfl\}\cdots A_m\{\bfl\}&\in A_1\{\bfl\}\mpX^{\la}
A_2\{\bfl\}\cdots A_m\{\bfl\}\\
&\han \mpX^{\la}A_1\{\bfl\}
A_2\{\bfl\}\cdots A_m\{\bfl\}.
\end{split}
\] Thus by \eqref{commute formula} we conclude that
$A_1\{\bfl\}A_2\{\bfl\}\cdots A_m\{\bfl\}\big({H\atop\la}\big)
-\big({H\atop\la}\big)A_1\{\bfl\}A_2\{\bfl\}\cdots A_m\{\bfl\}\in\mpX^{\la}A_1\{\bfl\}
A_2\{\bfl\}\cdots A_m\{\bfl\}$. The lemma is proved.
\end{proof}

\begin{Prop}\label{tri for affine gln}
For $A\in\afThnpm$ and $\la\in\afmbnn$ we have the following triangular relation in $\afuglq:$
\[
\Eap \bigg({H\atop\la}\bigg)\Faf=
A\{\la\}+\sum_{\dt\in\afmbnn\atop\dt<\la}g_{A,\la}^\dt A\{\dt\}
+\sum_{B\in\afThnpm,\,\dt\in\afmbnn\atop\sg(B)<\sg(A)}
g_{A,\la}^{B,\dt}B\{\dt\},
\]
where $g_{A,\la}^\dt,g_{A,\la}^{B,\dt}\in\mbz$.
\end{Prop}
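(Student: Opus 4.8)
The plan is to combine the triangular relation for products of divided-power root vectors (Corollary \ref{tri1}) with the corresponding statements for the positive and negative parts (Corollary \ref{tri for the postive part of affine gln}), and then track how the toral part $\big({H\atop\la}\big)$ interacts with both sides. Recall that $\Eap\in\afuglqp$ and $\Faf\in\afuglqm$ are ordered products of the $(a_{i,j}\afE_{i,j})\{\bfl\}$ over $\sL^+$ and $\sL^-$ respectively. So the left-hand side is already a product of root-vector divided powers interleaved with one toral factor, and the goal is to rewrite it in the PBW-type normal form $\sum B\{\dt\}$.

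First I would compute $\Eap\big({H\atop\la}\big)\Faf$ by pushing $\big({H\atop\la}\big)$ through $\Faf$ using Lemma \ref{generalized commute formula} (applied to the list of matrices $a_{i,j}\afE_{i,j}$, $(i,j)\in\sL^-$). That lemma gives
\[
\Eap\bigg({H\atop\la}\bigg)\Faf\in \Eap\,\mpX^\la\,\Faf + \Eap\Faf\cdot(\text{lower toral terms}),
\]
more precisely it writes $\big({H\atop\la}\big)\Faf$ as $\Faf\big({H\atop\la}\big)$ plus a $\mbz$-combination of terms $\big({H\atop\dt}\big)\Faf$ with $\dt<\la$. Combining with \eqref{0la A0} (which says $\big({H\atop\la}\big)A\{\bfl\}=A\{\la\}+\sum_{\dt<\la}\binom{\ro(A)}{\la-\dt}A\{\dt\}$) applied with $A$ replaced by each intermediate matrix, one sees that $\big({H\atop\la}\big)\Faf$ can be expanded, and then $\Eap\cdot\big(\Faf\{\la\}\text{-type terms}\big)$ reduces the problem to understanding products $\Ea \cdot (\text{something})\{\dt\}\cdot \Fa$ for various $\dt\leq\la$.

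The heart of the argument is then to show $\Eap\cdot C\{\nu\}\cdot\Faf$, for $C\in\afThnpm$ with $C^+$ and $C^-$ controlled, normalizes to $\sum B\{\mu\}$ with the stated triangularity. For this I would iterate \ref{fundamental formula} and \ref{formula in Hall algebra}: multiplying on the left by the divided powers $(a_{i,j}\afE_{i,j})\{\bfl\}$ for $(i,j)\in\sL^+$ produces, by \ref{fundamental formula}, a sum over $\al\in\La(\infty,k)$ of terms $A^{(\al)}\{\al_i\afbse_i+\dt\}$; the dominant term ($\al=k\bse_j$) contributes $\binom{a_{i,j}+k}{k}(A+k\afE_{i,j})\{\dt\}$ plus terms of strictly smaller $\sg$ in the off-diagonal part, exactly as in the proof of \ref{tri1}. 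Symmetrically, multiplying on the right by the $\Faf$ factors introduces only strictly smaller $\sg(B^-)$ corrections. Inducting on $\sg(A^+)$ and $\sg(A^-)$ (equivalently on the off-diagonal size of $A$), the leading term assembles to $A\{\la\}$, the next-order toral corrections give $\sum_{\dt<\la}g_{A,\la}^\dt A\{\dt\}$ (these come both from \eqref{0la A0} and from the $b>0$ summands in $a(\al,\dt)$ of \ref{fundamental formula} with $\al=k\bse_j$), and all remaining terms have $\sg(B)<\sg(A)$, contributing $\sum_{\sg(B)<\sg(A)}g_{A,\la}^{B,\dt}B\{\dt\}$.

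The main obstacle I anticipate is bookkeeping: one must verify that no term of the form $A'\{\dt\}$ with $\sg(A')=\sg(A)$ but $A'\neq A$ in the off-diagonal part survives. This requires observing, as in the proof of \ref{formula in Hall algebra}, that for $(i,j)\in\sL^+$ the matrix $A$ has $a_{j,t}=0$ for $t\leq i$, so the $\al\in\La(\infty,k)$ with $\al\neq k\bse_j$ forcibly lower $\sg$; and the analogous statement for $\sL^-$. The second subtlety is the order of multiplication — since we fixed a total order on $\sL^+$ and $\sL^-$ but the formulas \ref{fundamental formula}/\ref{formula in Hall algebra} are stated for one divided power at a time, one has to check that reordering within $\afuglqp$ (or $\afuglqm$) only produces lower-$\sg$ terms, which again follows by induction from \ref{formula in Hall algebra} together with \ref{tri for the postive part of affine gln}. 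Once these are in hand, the triangularity with respect to the partial order "$\sg(B)<\sg(A)$, or $B=A$ and $\dt\leq\la$" is immediate, proving the proposition.
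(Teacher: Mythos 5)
Your high-level plan is close to the paper's, and you correctly identify the key ingredients (Lemma~\ref{generalized commute formula}, Corollary~\ref{tri1}, Lemma~\ref{0mu Ala}, and the two identities \eqref{0la A0} and \eqref{A0 0la}). But you then take a detour that re-derives work already done. Once you commute $\big({H\atop\la}\big)$ past one of the two extremal factors via \ref{generalized commute formula}, the product that remains is exactly $\Eap\Faf$, and Corollary~\ref{tri1} \emph{already} tells you $\Eap\Faf = A\{\bfl\} + g$ with $g$ a $\mbz$-combination of $B\{\dt\}$ with $\sg(B)<\sg(A)$. After that, every remaining multiplication in the expression is of the form $\big({H\atop\mu}\big)\cdot B\{\dt\}$ (or $B\{\dt\}\cdot\big({H\atop\mu}\big)$), and these are handled in closed form by \ref{0mu Ala} (resp.\ \eqref{A0 0la}), which manifestly preserves the matrix $B$ and the bound $\sg(B)<\sg(A)$. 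So there is no need at all to ``normalize'' sandwiched products $\Eap\cdot C\{\nu\}\cdot\Faf$ by re-iterating \ref{fundamental formula}. (The paper's version pushes $\big({H\atop\la}\big)$ leftward past $\Eap$, so the toral factor ends up on the left, matching the convention of \eqref{0la A0} and \ref{0mu Ala}; your rightward version would need \eqref{A0 0la} instead, a purely symmetric choice.)

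The point where your proposal has an actual gap is the sentence ``Symmetrically, multiplying on the right by the $\Faf$ factors introduces only strictly smaller $\sg(B^-)$ corrections.'' Proposition~\ref{fundamental formula} and its corollaries are \emph{left}-multiplication formulas: they compute $(k\afE_{i,j})\{\bfl\}\cdot A\{\la\}$, not $A\{\la\}\cdot(k\afE_{i,j})\{\bfl\}$. You have no right-multiplication analogue available, and $C\{\nu\}$ is not itself a single divided power $(k\afE_{i,j})\{\bfl\}$, so you cannot simply ``build up'' $C\{\nu\}\Faf$ by iterated applications of \ref{fundamental formula} either. One could patch this (e.g.\ via an anti-automorphism, or by first expanding $\Faf$ from the right and commuting toral factors through), but nothing in your sketch supplies that. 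The streamlined route — commute $\big({H\atop\la}\big)$ once, apply \ref{tri1} once, then finish with \ref{0mu Ala} and \eqref{0la A0} — avoids this issue entirely and makes the bookkeeping obstacle you worried about disappear.
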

\begin{proof}
Let $\mpX_A^{\la}=\spann_{\mbz}\{A\{\dt\}\mid \dt\in\afmbnn,\,\dt<\la\}$ and $\mpX^{A}=\spann_\mbz
\{B\{\dt\}\mid B\in\afThnpm,\,\dt\in\afmbnn,\,\sg(B)<\sg(A)\}$.
Combining \ref{tri1} and \ref{generalized commute formula}  implies that
\[
\begin{split}
\Eap \bigg({H\atop\la}\bigg)\Faf&=\bigg(\bigg({H\atop\la}\bigg)+f\bigg)
\Eap \Faf =\bigg(\bigg({H\atop\la}\bigg)+f\bigg)(A\{\bfl\}+g)
\end{split}
\]
where $f\in\mpX^\la$ and $g\in\mpX^A$. From \ref{0mu Ala} and \eqref{0la A0}, we see that $f(A\{\bfl\}+g)\in\mpX_A^{\la}+\mpX^{A}$, $\big({H\atop\la}\big)g\in\mpX^A$ and $\big({H\atop\la}\big)A\{\bfl\}-A\{\la\}\in\mpX_A^\la$. Consequently, we have
$\Eap \big({H\atop\la}\big)\Faf-A\{\la\}\in\mpX_A^{\la}+\mpX^{A}$. The proposition is proved.
\end{proof}

\begin{Thm}\label{basis2 for afuglz}
Each of the following sets forms a $\mbz$-basis for $\sU_\mbz(\afgl):$
\begin{itemize}
\item[(1)]
$\fM=\{\Eap \big({H\atop\la}\big)\Faf\mid A\in\afThnpm,\,\la\in\afmbnn\};$
\item[(2)]
$\fB=\{A\{\la\}\mid A\in\afThnpm,\,\la\in\afmbnn\};$
\item[(3)]
$\fB'=\{A\{\bfl\}\big({H\atop\la}\big)\mid A\in\afThnpm,\,\la\in\afmbnn\}.$
\end{itemize}
In particular, $\sU_\mbz(\afgl)$ is generated by the elements
$u^+_{k\afE_{i,j},1}$, $u^-_{k\afE_{i,j},1}$ and $\big({H_i\atop k}\big)$ for $k\in\mbn$, $1\leq i\leq n$, $j\in\mbz$ and $i<j$.
\end{Thm}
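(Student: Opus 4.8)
We sketch the argument. The plan is to obtain each of (1)--(3) from the $\mbz$-basis $\{u^+_{C,1}\big({H\atop\la}\big)u^-_{D,1}\mid C,D\in\afThnp,\,\la\in\afmbnn\}$ of \ref{basis for afuglz} by a short chain of unitriangular changes of basis, using the triangular relations of \ref{tri for the postive part of affine gln} and \ref{tri for affine gln}; the assertion about generators is then immediate from the explicit shape of $\fM$. The one point needing care is that the relevant ``triangular'' orders have infinite lower sets, so each inversion has to be organised as an induction controlled by the $\mbn$-valued function $\sg$ together with the order $\le$ on $\afmbnn$, whose principal lower sets $\{\dt\in\afmbnn\mid\dt\le\la\}$ are finite.

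\emph{Step 1 ($\fM$).} For $A\in\afThnpm$ write $A=A^++A^-$ with $A^+\in\afThnp$, $A^-\in\afThnm$, and set $\ti A=\tr(A^-)\in\afThnp$; then $A\mapsto(A^+,\ti A)$ is a bijection $\afThnpm\to\afThnp\times\afThnp$ with $\sg(A)=\sg(A^+)+\sg(\ti A)$. By \ref{tri for the postive part of affine gln} and its analogue for $\afuglzm$, one has $\Eap=u^+_{A^+,1}+(\text{$\mbz$-combination of }u^+_{C,1}\text{ with }\sg(C)<\sg(A^+))$ and $\Faf=u^-_{\ti A,1}+(\text{$\mbz$-combination of }u^-_{D,1}\text{ with }\sg(D)<\sg(\ti A))$. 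Multiplying these out on the two sides of the fixed factor $\big({H\atop\la}\big)$ writes $\Eap\big({H\atop\la}\big)\Faf$ as $u^+_{A^+,1}\big({H\atop\la}\big)u^-_{\ti A,1}$ plus a $\mbz$-combination of basis vectors $u^+_{C,1}\big({H\atop\la}\big)u^-_{D,1}$ with $\sg(C)+\sg(D)<\sg(A)$. Hence the transition matrix from $\fM$ to the basis of \ref{basis for afuglz} has the form $I+N$ with $N$ strictly lowering $\sg$ and having finitely supported rows, so it is invertible over $\mbz$ and $\fM$ is a $\mbz$-basis of $\sU_\mbz(\afgl)$.

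\emph{Step 2 ($\fB$ and $\fB'$).} First I would show $A\{\la\}\in\sU_\mbz(\afgl)$ for all $A\in\afThnpm$, $\la\in\afmbnn$, by induction on $\sg(A)$ and, for fixed $A$, on $\la$: solving the identity of \ref{tri for affine gln} for $A\{\la\}$ writes it as $\Eap\big({H\atop\la}\big)\Faf\in\afuglzp\afuglzz\afuglzm=\sU_\mbz(\afgl)$ minus a $\mbz$-combination of $A\{\dt\}$ with $\dt<\la$ and of $B\{\dt\}$ with $\sg(B)<\sg(A)$, all of which lie in $\sU_\mbz(\afgl)$ by induction. The same identity shows every element of $\fM$ is a $\mbz$-combination of elements of $\fB$, so $\fB$ spans $\sU_\mbz(\afgl)$ by Step 1. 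For linear independence one inverts \ref{tri for affine gln} recursively, expressing $A\{\la\}$ as $\Eap\big({H\atop\la}\big)\Faf$ plus a $\mbz$-combination of elements of $\fM$ whose indices $(B,\dt)$ satisfy $\sg(B)<\sg(A)$, or $B=A$ and $\dt<\la$; this recursion is legitimate because $\sg$ is $\mbn$-valued and nonincreasing along such chains while the $\la$-coordinate has finite lower sets, and the resulting transition to the $\mbz$-basis $\fM$ is unitriangular, so $\fB$ is a $\mbz$-basis. Finally \eqref{A0 0la} gives $A\{\bfl\}\big({H\atop\la}\big)=A\{\la\}+\sum_{\dt<\la}\big({\co(A)\atop\la-\dt}\big)A\{\dt\}$, which for each fixed $A$ is a transition from $\fB'$ to $\fB$ that is unitriangular in $\la$, with distinct $A$ not interacting; hence $\fB'$ is a $\mbz$-basis too.

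\emph{Step 3 (generators).} By the definitions in \eqref{MAla}, $\Eap=\prod_{(i,j)\in\sL^+}u^+_{a_{i,j}\afE_{i,j},1}$ is a product of elements $u^+_{k\afE_{i,j},1}$ ($1\le i\le n$, $j\in\mbz$, $i<j$), $\big({H\atop\la}\big)=\prod_{1\le i\le n}\big({H_i\atop\la_i}\big)$ is a product of elements $\big({H_i\atop k}\big)$, and $\Faf=\prod_{(i,j)\in\sL^-}u^-_{a_{i,j}\afE_{j,i},1}$ is a product of elements which, after shifting the row index by a suitable multiple of $n$, have the form $u^-_{k\afE_{i',j'},1}$ with $1\le i'\le n$, $j'\in\mbz$, $i'<j'$. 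Thus every element of the $\mbz$-basis $\fM$ is a product of the listed elements, so these elements generate $\sU_\mbz(\afgl)$ as a $\mbz$-algebra. The place requiring the most care is, as indicated, the recursive inversion in Step 2; everything else is routine bookkeeping with unitriangular matrices.
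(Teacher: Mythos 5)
Your proof is correct and follows essentially the same route as the paper, which dispatches (1) by combining \ref{basis for afuglz} with \ref{tri for the postive part of affine gln}, and (2)--(3) from \ref{tri for affine gln} and \eqref{A0 0la}; you have simply made explicit the bookkeeping for the unitriangular base changes (the bijection $A\mapsto(A^+,\tr(A^-))$, the termination of the inversion via the $\mbn$-valued function $\sg$ and the finiteness of lower sets for $\le$ on $\afmbnn$, and the index shift $E_{j,i}=E_{j+sn,i+sn}$ needed to recognize the factors of $\Faf$ among the listed generators) that the paper's two-line proof leaves to the reader.
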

\begin{proof}
Combining \ref{basis for afuglz} and
\ref{tri for the postive part of affine gln} implies that the set
$\fM$ forms a $\mbz$-basis for $\sU_\mbz(\afgl)$. The remaining assertion follows from
\eqref{A0 0la} and \ref{tri for affine gln}.
\end{proof}

By \eqref{th+uA} and \eqref{la=0} we have the following result.

\begin{Lem}\label{divided power}
Let $k\in\mbn$ and $i,j\in\mbz$ with $\bar i\not=\bar j$. Then
\begin{equation*}
(\afE_{i,j})^{(k)}
=(k\afE_{i,j})\{\bfl\}=
\begin{cases}
u^+_{k\afE_{i,j}}&\text{if $i<j$},\\
u^-_{k\afE_{j,i}}&\text{if $i>j$},
\end{cases}
\end{equation*}
where $(\afE_{i,j})^{(k)}=\frac{(\afE_{i,j})^k}{k!}$.
\end{Lem}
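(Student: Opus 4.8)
The plan is to read both assertions off from the normalisation \eqref{th+uA} together with a single application of the commutation formula \eqref{la=0}. First I would settle the case-by-case identity. If $i<j$, then $k\afE_{i,j}\in\afThnp$, so \eqref{th+uA} gives at once $u^+_{k\afE_{i,j},1}=(k\afE_{i,j})\{\bfl\}$. If $i>j$, then $k\afE_{j,i}\in\afThnp$, its transpose matrix is $k\afE_{i,j}$, and \eqref{th+uA} gives $u^-_{k\afE_{j,i},1}={}^t(k\afE_{j,i})\{\bfl\}=(k\afE_{i,j})\{\bfl\}$. Thus the lemma reduces to the first equality $(\afE_{i,j})^{(k)}=(k\afE_{i,j})\{\bfl\}$, i.e. to the statement that $(\afE_{i,j})^k=k!\,(k\afE_{i,j})\{\bfl\}$ in $\afuglq$, where by \ref{realization} we identify $\afE_{i,j}$ with $\afE_{i,j}[\bfl]=\afE_{i,j}\{\bfl\}$.

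I would prove $(\afE_{i,j})^k=k!\,(k\afE_{i,j})\{\bfl\}$ by induction on $k$, the case $k=1$ being the identification just recalled. For the inductive step I would multiply $\afE_{i,j}\{\bfl\}$ into $\bigl((k-1)\afE_{i,j}\bigr)\{\bfl\}$ using the $k=1$ instance of \eqref{la=0} with $A=(k-1)\afE_{i,j}$, and identify the surviving summands $A^{(\al)}\{\al_i\afbse_i\}$. Since $\bar i\neq\bar j$, one has $\dt_{\bar j,\bar i}=0$, so the ``interaction'' contribution drops out and $\al$ runs through the elements $\bse_p$ ($p\in\mbz$) of $\La(\infty,1)$; for every $p\neq j$ the matrix $A^{(\bse_p)}=A+\sum_{t\neq i}(\bse_p)_t\afE_{i,t}-\sum_{t\neq j}(\bse_p)_t\afE_{j,t}$ has the entry $-1$ in position $(j,p)$ (nothing can cancel the summand $-\afE_{j,p}$ because $\bar i\neq\bar j$), whence $A^{(\bse_p)}\{\al_i\afbse_i\}=0$ by the convention that $A\{\la,r\}=0$ whenever $A$ has a negative off-diagonal entry. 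Only $\al=\bse_j$ contributes, and there $A^{(\bse_j)}=(k-1)\afE_{i,j}+\afE_{i,j}=k\afE_{i,j}$, $\al_i=(\bse_j)_i=0$, while the relevant binomial coefficient in \eqref{la=0} equals $\binom{(k-1)+1}{1}=k$. Hence $\afE_{i,j}\{\bfl\}\cdot\bigl((k-1)\afE_{i,j}\bigr)\{\bfl\}=k\,(k\afE_{i,j})\{\bfl\}$; combined with the inductive hypothesis $(\afE_{i,j})^{k-1}=(k-1)!\,\bigl((k-1)\afE_{i,j}\bigr)\{\bfl\}$ this gives $(\afE_{i,j})^k=k!\,(k\afE_{i,j})\{\bfl\}$, as required.

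The only point that needs genuine care is the bookkeeping in that one application of \eqref{la=0}: one must check that $\bar i\neq\bar j$ forces the $\dt_{\bar j,\bar i}$-terms to vanish and that the vanishing convention for matrices with a negative off-diagonal entry kills every $\al$ other than $\bse_j$; everything else is a routine one-variable induction. (For $i<j$ one could instead run the same induction inside the Ringel--Hall algebra $\afuglzp$ using \ref{formula in Hall algebra} in place of \eqref{la=0}, and then pass to $\afuglzm$ by transposition when $i>j$; the argument via \eqref{la=0} has the advantage of handling both signs uniformly.)
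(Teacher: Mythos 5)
Your proof is correct and matches the paper's own (one-line) argument, which simply invokes the normalisation $u^+_{A,1}=A\{\bfl\}$, $u^-_{A,1}=({}^t\!A)\{\bfl\}$ from \eqref{th+uA} together with the $\la=\bfl$ multiplication formula \eqref{la=0}; you have merely spelled out the induction on $k$ that the paper leaves implicit, including the two observations that $\bar i\neq\bar j$ kills the $\dt_{\bar j,\bar i}$-terms and that the sign convention for matrices with a negative off-diagonal entry eliminates every $\al$ except $\bse_j$.
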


\begin{Rem}
$(1)$ From \ref{basis2 for afuglz} and \ref{divided power}, we see that $\afuglz$ is generated by the elements
$(\afE_{i,j})^{(k)}$, $u^+_{k\afE_{i,i+mn},1}$, $u^-_{k\afE_{i,i+mn},1}$ and $\big({H_i
\atop k}\big)$ for $k\in\mbn$, $1\leq i\leq n$, $j\in\mbz$ with $\bar i\not=\bar j$, $m>0$.
It should be noted that for $k\geq 2$, $\frac{(\afE_{i,i+mn})^k}{k!}\not\in\afuglz$ in general.

$(2)$ Let $\sU_\mbz(\frak{gl}_n)$ be the $\mbz$-subalgebra of $\sU_\mbz(\afgl)$ generated by the elements
$(\afE_{i,j})^{(k)}$ and $\big({H_i
\atop k}\big)$ for $k\in\mbn$, $1\leq i,j\leq n$ with $i\not=j$. Then the set
$\{\Eap \big({H\atop\la}\big)\Faf\mid A\in\Thnpm,\,\la\in\afmbnn\}$
is the PBW-basis of $\sU_\mbz(\frak{gl}_n)$ constructed by Kostant \cite{Ko},
where $\Thnpm$ is the subset of $\afThnpm$ consisting of all  $A\in\afThnpm$ such that $a_{i,j}=0$ for $1\leq i\leq n$ and  $j\not\in[1,n]$. So we may regard
the set $\fM$ as the PBW-basis of $\sU_\mbz(\afgl)$.

\end{Rem}

\section{The Garland integral form of $\afuglq$}

Let $\mathscr S=\mbq[X_i\mid i\geq 1]$ be the polynomial algebra over $\mbq$ in infinitely many indeterminates.
Following \cite{Ga}, we define a derivation
$D_+:\mathscr S\ra\mathscr S$ by setting
$$D_+(X_i)=iX_{i+1}$$
for $i\geq 1$.
Let $\La_0=1$. For $k\geq 1$ let $$\La_k=\frac{(D_++L_{X_1})^{k-1}}{k!}(X_1).$$
For $x\in\mathscr S$ define $L_x:\mathscr S\ra\mathscr S$ by $L_x(y)=xy$, $y\in\mathscr S$.
\begin{Lem}[{\cite[5.2]{Ga}}]\label{Galand lemma}
For $k\geq 1$ we have
$$\frac{(L_{X_1}+D_+)^{k}}{k!}=\sum_{0\leq s\leq k}L_{\La_{s}}
\frac{D_+^{k-s}}{(k-s)!}.$$
\end{Lem}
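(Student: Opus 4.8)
The plan is to prove the identity by induction on $k$, treating the operators $L_{X_1}$ and $D_+$ as noncommuting elements of $\End_\mbq(\mathscr S)$ and extracting the crucial commutation rule from the definition of the $\La_s$. The base case $k=1$ is immediate: $(L_{X_1}+D_+)^1/1!=L_{X_1}+D_+$, while the right-hand side is $L_{\La_0}D_+^1/1!+L_{\La_1}D_+^0/0!=D_++L_{X_1}$ since $\La_0=1$ and $\La_1=X_1$. So the content is entirely in the inductive step, and in identifying what goes wrong when one naively multiplies by $(L_{X_1}+D_+)$ on the left.

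First I would record the key commutation identity. Since $D_+$ is a derivation of $\mathscr S$, for any $x\in\mathscr S$ we have $D_+\circ L_x=L_x\circ D_++L_{D_+(x)}$ as operators on $\mathscr S$; equivalently $D_+L_x-L_xD_+=L_{D_+x}$. Iterating, $D_+^{j}L_x=\sum_{0\le a\le j}\binom{j}{a}L_{D_+^{a}(x)}D_+^{j-a}$. The other ingredient I would extract is a recursion for the $\La_s$ themselves: from $\La_{k}=\frac{(D_++L_{X_1})^{k-1}}{k!}(X_1)$ one reads off, upon multiplying the defining operator relation by $(D_++L_{X_1})$, that $(k+1)\La_{k+1}=(D_++L_{X_1})(k\La_k)$ evaluated at $X_1$, i.e. $(k+1)\La_{k+1}=D_+(\La_k)+X_1\La_k$, valid for $k\ge 1$, and also for $k=0$ in the form $\La_1=X_1\La_0=X_1$. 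This is the "arithmetic" relation among the $\La_s$ that the telescoping in the inductive step will rely on.

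For the inductive step, assume the formula for $k$ and compute $\frac{(L_{X_1}+D_+)^{k+1}}{(k+1)!}=\frac{1}{k+1}(L_{X_1}+D_+)\sum_{0\le s\le k}L_{\La_s}\frac{D_+^{k-s}}{(k-s)!}$. I would split this into the $L_{X_1}$-part and the $D_+$-part. The $L_{X_1}$-part contributes $\frac{1}{k+1}\sum_{s}L_{X_1\La_s}\frac{D_+^{k-s}}{(k-s)!}$. In the $D_+$-part I apply the commutation identity $D_+L_{\La_s}=L_{\La_s}D_++L_{D_+\La_s}$, which produces $\frac{1}{k+1}\sum_{s}\big(L_{\La_s}\frac{D_+^{k-s+1}}{(k-s)!}+L_{D_+\La_s}\frac{D_+^{k-s}}{(k-s)!}\big)$. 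Now I combine terms indexed by the new upper summation variable $s'$ running $0$ to $k+1$: the coefficient of $\frac{D_+^{k+1-s'}}{(k+1-s')!}$ should assemble, using $\frac{1}{(k-s)!}=\frac{k+1-s'}{(k+1-s')!}$-type bookkeeping, into $\frac{1}{k+1}\big(X_1\La_{s'}+D_+\La_{s'-1}+(\text{factor})\La_{s'-1}\big)$, and the recursion $(s'+?)\La_{s'}=D_+\La_{s'-1}+X_1\La_{s'-1}$ is exactly what is needed to collapse this to $L_{\La_{s'}}\frac{D_+^{k+1-s'}}{(k+1-s')!}$. The endpoint terms $s'=0$ (pure $D_+^{k+1}/(k+1)!$, coefficient $\La_0$) and $s'=k+1$ (pure $L_{\La_{k+1}}$) must be checked separately and match because $\La_0=1$ and because of the $k=0$ case of the recursion.

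The main obstacle I anticipate is purely combinatorial: keeping the factorial denominators and the two shifted sums aligned so that the coefficient of each $\frac{D_+^{k+1-s'}}{(k+1-s')!}L_{(-)}$ reduces cleanly via the $\La$-recursion $(s'+1)\La_{s'+1}=D_+\La_{s'}+X_1\La_{s'}$. There is no conceptual difficulty — the derivation property of $D_+$ and the defining recursion of $\La_k$ are the only facts used — but the index shift $s\mapsto s'=s$ versus $s'=s+1$ in the two pieces, together with getting the $\frac{1}{k+1}$ to distribute correctly over $\frac{1}{(k-s)!}$, is where a careful pass is required. I would organize the bookkeeping by first rewriting everything with common denominator $(k+1)!$ before collecting terms, which makes the telescoping transparent.
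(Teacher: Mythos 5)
The paper gives no proof of this lemma; it is cited verbatim from Garland's paper, so there is no ``paper's own proof'' to compare against. Your inductive argument is correct and self-contained, and it rests on exactly the two facts one would expect: the derivation identity $[D_+,L_x]=L_{D_+x}$ and the recursion $(s+1)\La_{s+1}=D_+\La_s+X_1\La_s$ that falls straight out of the definition $\La_k=\frac{(D_++L_{X_1})^{k-1}}{k!}(X_1)$ by applying $D_++L_{X_1}$ once more. Carrying out the bookkeeping you describe: after reindexing by $s'$ in both the $L_{X_1}$-piece (where $s=s'-1$) and the two halves of the $D_+$-piece (where $s=s'$ and $s=s'-1$ respectively), the coefficient of $\frac{D_+^{k+1-s'}}{(k+1-s')!}$ for $1\le s'\le k$ is $\frac{1}{k+1}\bigl(L_{X_1\La_{s'-1}}+L_{D_+\La_{s'-1}}\bigr)+\frac{k+1-s'}{k+1}L_{\La_{s'}}$, which the recursion collapses to $\frac{s'}{k+1}L_{\La_{s'}}+\frac{k+1-s'}{k+1}L_{\La_{s'}}=L_{\La_{s'}}$; the endpoints $s'=0$ and $s'=k+1$ give $L_{\La_0}\frac{D_+^{k+1}}{(k+1)!}$ and $L_{\La_{k+1}}$ as required. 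The one place where your write-up slips is the displayed coefficient $\frac{1}{k+1}\bigl(X_1\La_{s'}+D_+\La_{s'-1}+(\text{factor})\La_{s'-1}\bigr)$ --- it should read $\frac{1}{k+1}\bigl(X_1\La_{s'-1}+D_+\La_{s'-1}\bigr)+\frac{k+1-s'}{k+1}\La_{s'}$ --- but you flag exactly this index-alignment issue yourself, and the surrounding description makes clear you have the right mechanism; no real gap.
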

\begin{Coro}\label{Lak}
For $k\geq 1$ we have
$$\La_k=\frac{1}{k}\sum_{0\leq s\leq k-1}\La_sX_{k-s}.$$
\end{Coro}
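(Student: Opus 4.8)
The plan is to derive the recursion by applying Lemma \ref{Galand lemma} with $k$ replaced by $k-1$ and then evaluating the resulting operator identity at the element $X_1\in\mathscr S$. The one ingredient not yet recorded in the excerpt is the elementary formula
$$\frac{D_+^{\,j}}{j!}(X_1)=X_{j+1}\qquad(j\geq 0),$$
which I would prove by induction on $j$: the case $j=0$ is clear, and since $D_+(X_{j+1})=(j+1)X_{j+2}$ by the definition of $D_+$, one gets $D_+^{\,j+1}(X_1)=D_+\bigl(j!\,X_{j+1}\bigr)=(j+1)!\,X_{j+2}$, hence $\frac{D_+^{\,j+1}}{(j+1)!}(X_1)=X_{j+2}$.

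Next, for $k\geq 1$ I would read Lemma \ref{Galand lemma} with exponent $k-1$ in place of $k$ (for $k=1$ this is just the trivial identity $\mathrm{id}=L_{\La_0}\,\mathrm{id}$, so no hypothesis is lost), obtaining
$$\frac{(L_{X_1}+D_+)^{k-1}}{(k-1)!}=\sum_{0\leq s\leq k-1}L_{\La_s}\,\frac{D_+^{\,k-1-s}}{(k-1-s)!}.$$
Applying both sides to $X_1$ and using the auxiliary formula above with $j=k-1-s$, the right-hand side becomes $\sum_{0\leq s\leq k-1}\La_s X_{k-s}$, since each $L_{\La_s}$ acts by multiplication. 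For the left-hand side, the definition $\La_k=\frac{(D_++L_{X_1})^{k-1}}{k!}(X_1)$ shows that $\frac{(L_{X_1}+D_+)^{k-1}}{(k-1)!}(X_1)=k\,\La_k$. Equating the two expressions and dividing by $k$ gives $\La_k=\frac1k\sum_{0\leq s\leq k-1}\La_s X_{k-s}$, as claimed.

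I do not anticipate any real obstacle here: the statement is a direct consequence of Lemma \ref{Galand lemma}, and the only points demanding attention are the index shift $k\mapsto k-1$ and the computation $D_+^{\,j}(X_1)=j!\,X_{j+1}$. As a sanity check one may verify the smallest case $k=1$ by hand, where $\La_1=X_1$ and the right-hand side equals $\La_0X_1=X_1$.
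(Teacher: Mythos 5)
Your proof is correct and follows exactly the same route as the paper's: apply Lemma \ref{Galand lemma} with exponent $k-1$, evaluate at $X_1$ using $D_+^{j}(X_1)=j!\,X_{j+1}$, and use the definition $\La_k=\frac{(D_++L_{X_1})^{k-1}}{k!}(X_1)$ to identify the left-hand side with $k\La_k$. The only difference is that you spell out the inductive verification of $D_+^{j}(X_1)=j!\,X_{j+1}$, which the paper dismisses as clear.
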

\begin{proof}
Clearly we have $D_+^s(X_1)=(s!)X_{s+1}$ for $s\geq 0$.
Thus by \ref{Galand lemma} we have
$$\La_k=\frac{1}{k}\sum_{0\leq s\leq k-1}L_{\La_{s}}
\frac{D_+^{k-1-s}}{(k-1-s)!}(X_1)=\frac{1}{k}\sum_{0\leq s\leq k-1}
\La_{s}X_{k-s}$$
as required.
\end{proof}

For $i,l\in\mbz$ with $l\not=0$ we define an algebra homomorphism
\begin{equation}\label{Psiil}
\Psi_{i,l}:\mathscr S\ra\afuglq
\end{equation}
by $\Psi_{i,l}(X_m)=\afE_{i,i+mln}$.
Let $\Uz$ be the $\mbz$-subalgebra of $\afuglq$ generated by
the elements
$(\afE_{i,j})^{(k)}$, $\Psi_{i,l}(\La_k)$ and $\big({H_i
\atop k}\big)$ for $k\in\mbn$, $1\leq i\leq n$, $j,l\in\mbz$ with $\bar i\not=\bar j$, $l\not=0$. We will prove in \ref{identification} that
$\Uz$ coincides with $\afuglz$.

For $i\not=j\in\mbz$ and $k\in\mbn$ let
$$\msX_{i,j}^{(k)}=
\begin{cases}
\frac{(\afE_{i,j})^k}{k!}&\text{if $\bar i\not=\bar j$}\\
\Psi_{i,l}(\La_k)&\text{if $j=i+ln$ for some $l\not=0$}.
\end{cases}
$$
We fix a total order on $\sL^+,\sL^-$ and define for $A\in\afThnpm$
\begin{equation}\label{MAla}
\begin{split}
\msEap&=\prod_{(i,j)\in\sL^+}\msX_{i,j}^{(a_{i,j})}, \qquad
\msFaf=\prod_{(i,j)\in\sL^-}\msX_{i,j}^{(a_{i,j})}.
\end{split}
\end{equation}
\begin{Thm}[{\cite[Th. 5.8]{Ga}}]\label{Galand theorem}
The set $\{\msEap\big({H\atop\la}\big)\msFaf
\mid A\in\afThnpm,\,\la\in\afmbnn\}$ forms
a $\mbz$-basis of $\Uz$.
\end{Thm}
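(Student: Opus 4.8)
\emph{Proof proposal.} This is Garland's theorem \cite[Th.~5.8]{Ga}, which one may invoke directly; I sketch how a proof would run in the present matrix notation. The statement is of Poincar\'e--Birkhoff--Witt type, so it splits into a $\mbz$-linear independence claim and a $\mbz$-spanning claim. Independence is the soft half. The generators $(\afE_{i,j})^{(k)}$ ($\bar i\not=\bar j$), $\Psi_{i,l}(\La_k)$ ($l\not=0$) and $\big({H_i\atop k}\big)$ already generate $\afuglq$ over $\mbq$, since $\afE_{i,j}=(\afE_{i,j})^{(1)}$ for $\bar i\not=\bar j$, $\afE_{i,i+ln}=\Psi_{i,l}(X_1)=\Psi_{i,l}(\La_1)$, and $\afE_{i,i}=\big({H_i\atop 1}\big)$. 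Combining the triangular decomposition $\afuglq=\afuglqp\ot\afuglqz\ot\afuglqm$ with the PBW theorem for $\afuglq$, and using that, by \ref{Lak} (equivalently \ref{Galand lemma}), $\La_k\equiv\frac1k X_k$ modulo decomposables --- so that $\{\La_k\}_{k\ge1}$ arises from $\{X_k\}_{k\ge1}$ by a filtered change of variables on the polynomial subalgebra of $\afuglqp$ generated by the imaginary root vectors, and likewise $\big({H_i\atop k}\big)$ is a filtered rewriting of $H_i^k$ --- one checks that the image of $\{\msEap\big({H\atop\la}\big)\msFaf\mid A\in\afThnpm,\,\la\in\afmbnn\}$ in $\afuglq$ is an ordered PBW-type $\mbq$-basis; hence the integral set is $\mbz$-linearly independent. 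It remains to prove it $\mbz$-spans $\Uz$.

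For spanning, let $\mpM\subseteq\Uz$ be the $\mbz$-span of the set in question. Since each of its members is a product of the algebra generators of $\Uz$ (for the diagonal factor, $\big({H\atop\la}\big)=\prod_i\big({H_i\atop\la_i}\big)$), we have $1\in\mpM\subseteq\Uz$, so it suffices to show $\mpM$ is stable under left multiplication by each generator. Left multiplication by a diagonal generator $\big({H_i\atop k}\big)$ is controlled by the commutation formulas \ref{0mu Ala}, \eqref{0la A0}, \eqref{A0 0la} and \ref{generalized commute formula}: these move $\big({H_i\atop k}\big)$ through the $\msEap$- and $\msFaf$-parts producing a $\mbz$-combination of terms of the same shape, so the result lies in $\mpM$. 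Left multiplication by a real-root divided power $(\afE_{i,j})^{(k)}$ ($\bar i\not=\bar j$) is also integral: by \ref{divided power} it equals the Ringel--Hall generator $u^+_{k\afE_{i,j},1}$ (resp.\ $u^-_{k\afE_{j,i},1}$), and the reordering within the positive (resp.\ negative) part, the commutation of a positive factor past negative and diagonal factors, and the resulting diagonal corrections are governed by \ref{fundamental formula}, \ref{formula in Hall algebra} and the triangularity of \ref{tri1}, \ref{tri for the postive part of affine gln} and \ref{tri for affine gln} --- with integral structure constants throughout, all terms landing in $\mpM$.

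The substantive step, which I expect to be the main obstacle, is stability under left multiplication by an imaginary-root generator $\Psi_{i,l}(\La_k)$. A priori $\Psi_{i,l}(\La_k)$ carries denominators, as $\La_k=\frac1k\sum_{0\le s\le k-1}\La_sX_{k-s}$ by \ref{Lak}; one must show that after bringing $\Psi_{i,l}(\La_k)$ to normal form past the diagonal and negative-root parts of a monomial, no denominator survives. This is the core of Garland's argument: for $\bar j\not=\bar i$ one has $[\afE_{j,i},\afE_{i,i+ln}]=\afE_{j,i+ln}$, a real-root vector, so commuting the generators $\Psi_{i,l}(X_m)=\afE_{i,i+mln}$ of $\Psi_{i,l}(\mathscr S)$ through the negative-root factors yields further real-root vectors; feeding this back into the recursion of \ref{Lak} and using the integrality of the binomial and multinomial coefficients that appear forces the $1/k$'s to cancel. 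I would organize this as a double induction --- on $\sg(A)$, which measures how much of a given monomial $\Psi_{i,l}(\La_k)$ must be commuted past, and on $k$ via \ref{Lak} --- reducing each step to the real-root and diagonal relations already handled together with the defining recursion for $\La_k$. (Once the Ringel--Hall interpretation of $\Psi_{i,l}(\La_k)$ anticipated in \ref{key for integral form} is available there is a shorter route, but it is logically downstream of the present positive part and of the identification \ref{identification}.)
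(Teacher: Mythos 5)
The paper does not actually prove this theorem: it is stated and used as a black-box citation of Garland \cite[Th.~5.8]{Ga}, with no proof given in the text. Your opening move — ``invoke directly'' — therefore exactly matches what the paper does, and the sketch you append of how Garland's argument would run in the matrix notation (independence over $\mbq$ via PBW and a filtered change of variables, spanning by stability under left multiplication by the generators, with the imaginary-root generators $\Psi_{i,l}(\La_k)$ as the genuine obstacle) is a fair reconstruction of the content of \cite{Ga}.

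One small correction to your closing parenthetical: the Ringel--Hall interpretation \ref{key for integral form} is \emph{not} logically downstream of \ref{Galand theorem} or \ref{identification}. Its proof relies only on \ref{Lak}, \ref{lem-formulas for integral form} and induction, none of which touch Garland's basis theorem; it merely appears later in the exposition. Combined with \ref{A1A2 cdots At}, \ref{tri for the postive part of affine gln} and \ref{basis for afuglz} (the latter cited from \cite{Fu13}), one gets both $\Uz\subseteq\afuglz$ and $\afuglz\subseteq\Uz$, and the triangularity in \ref{relation between Galand integral form and Ringel Hall algebras} then gives the basis claim — a self-contained route to Garland's theorem. The paper simply chooses not to take it, preferring to cite Garland and then identify the two integral forms. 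So your sketch is sound, and the shorter route you flagged as unavailable is in fact available.
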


For $k\in\mbn$ let $\La^+(k)=\{\la=(\la_s)_{s\geq 1}\mid\la_s\in\mbn\   \forall s,\,\sum_{s\geq 1}\la_s=k\}$ and
$\ti\La^+(k)=\{\bfb=(b_s)_{s\geq 1}\mid b_s\in\mbn\   \forall s,\,\sum_{s\geq 1}sb_s=k\}$. Then
there is a bijective map
$$\vartheta_k:\La^+(k)\ra\ti\La^+(k)$$
sending $\la$ to $\bfb$, where
$b_s=|\{j\geq 1\mid \la_j=s\}|.$

For $i,l\in\mbz$ and $m\in\mbn$ with $l\not=0$ let
$$A_{m}^{(i,l)}=\begin{cases}
\afE_{i,i+mln}&\text{if $m\geq 1$}\\
0&\text{if $m=0$}.
\end{cases}$$
For $\la\in\La^+(k)$
let $$A_\la^{(i,l)}=\sum_{s\geq 1}A^{(i,l)}_{\la_s}.$$
Furthermore, for $\bfb\in\ti\La^+(k)$ let
$$\ti A_\bfb^{(i,l)}=\sum_{s\geq 1}b_sA_s^{(i,l)}.$$
Then we have
$\ti A_\bfb^{(i,l)}=A_{\vartheta_k^{-1}(\bfb)}^{(i,l)}$
for $\bfb\in\ti\La^+(k)$.

For $s\geq 1$ let $\bse_s=(\dt_{s,j})_{j\geq 1}$.
By \cite[Cor. 6.3.5]{DDF} we have the following result.
\begin{Lem}\label{lem-formulas for integral form}
Let $k\in\mbn$, $m\geq 1$ and $i,l\in\mbz$ with $l\not= 0$. Then
$$\afE_{i,i+mln}[\bfl]\ti A_\bfb^{(i,l)}[\bfl]=\sum_{s\geq 0}(b_{s+m}+1)
\ti A_{\bfb+\bse_{m+s}-\bse_s}^{(i,l)}[\bfl]$$
for $\bfb\in\ti\La^+(k)$, where $\bse_0=0$.
\end{Lem}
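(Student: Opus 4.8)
The plan is to read this off the rank-one multiplication formula \eqref{la=0}, i.e.\ \ref{fundamental formula} specialized to $k=1$ and to the extra-diagonal part $\la=0$, applied with $A:=\ti A_\bfb^{(i,l)}$, row index $i$, and column index $j:=i+mln$. Since $n\mid mln$ we have $\bar j=\bar i$, so the factor $\dt_{\bar j,\bar i}$ in \eqref{la=0} is $1$; and since $m\geq1$ both $\afE_{i,i+mln}$ and $\ti A_\bfb^{(i,l)}$ are genuinely off-diagonal matrices in $\afThnpm$ (the only nonzero entries of the latter are $(\ti A_\bfb^{(i,l)})_{i,\,i+sln}=b_s$ for $s\geq1$, in the rows $\equiv i\bmod n$; the diagonal is $0$ because $sln\neq0$). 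Hence $\afE_{i,i+mln}[\bfl]=\afE_{i,i+mln}\{\bfl\}=(1\cdot\afE_{i,i+mln})\{\bfl\}$ and \eqref{la=0} applies, rewriting $\afE_{i,i+mln}\{\bfl\}\,\ti A_\bfb^{(i,l)}\{\bfl\}$ as a sum over $\al\in\La(\infty,1)$, that is, over those $\al$ supported at a single $p\in\mbz$.

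The core step is to determine which $p$ survive and with what coefficient. Using the periodicity $\afE_{j,t}=\afE_{i+mln,t}=\afE_{i,\,t-mln}$, for $p\notin\{i,j\}$ the matrix $A^{(\al)}$ of \ref{fundamental formula} becomes $\ti A_\bfb^{(i,l)}+\afE_{i,p}-\afE_{i,\,p-mln}$, which has all off-diagonal entries $\geq0$ exactly when the column $p-mln$ already carries a positive entry of $\ti A_\bfb^{(i,l)}$; this forces $p=i+(m+s)ln$ with $s\geq1$ and $b_s\geq1$, in which case $A^{(\al)}=\ti A_{\bfb+\bse_{m+s}-\bse_s}^{(i,l)}$. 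The endpoint $p=j$ (i.e.\ $s=0$) is special: there the subtraction term is absent because of the restriction $t\neq j$ in the definition of $A^{(\al)}$, so $A^{(\al)}=\ti A_\bfb^{(i,l)}+\afE_{i,i+mln}=\ti A_{\bfb+\bse_m}^{(i,l)}$; and $p=i$ produces a negative off-diagonal entry, contributing $0$. For every surviving $p$ one has $\al_i=0$, so $A^{(\al)}\{\al_i\afbse_i\}=\ti A_{\bfb+\bse_{m+s}-\bse_s}^{(i,l)}[\bfl]$, and the product $\prod_{t\neq i}\binom{a_{i,t}+\al_t-\al_{t+mln}}{\al_t}$ in \eqref{la=0} collapses to its single factor at $t=p$, which equals $\binom{b_{m+s}+1}{1}=b_{m+s}+1$ (the correction $\al_{t+mln}$ never intervenes because $m\neq0$ gives $p+mln\neq p$).

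The remaining step is cosmetic: the indices $s\geq1$ with $b_s=0$ can be added back at no cost, since then $\bfb+\bse_{m+s}-\bse_s$ has a negative coordinate and $\ti A_{\bfb+\bse_{m+s}-\bse_s}^{(i,l)}[\bfl]=0$ by the convention $A\{\la,r\}=0$ for matrices with a negative off-diagonal entry; likewise $\{\bfl\}$ may be replaced by $[\bfl]$ throughout since $A\{\bfl\}=A[\bfl]$. Assembling the surviving terms yields $\afE_{i,i+mln}[\bfl]\,\ti A_\bfb^{(i,l)}[\bfl]=\sum_{s\geq0}(b_{s+m}+1)\,\ti A_{\bfb+\bse_{m+s}-\bse_s}^{(i,l)}[\bfl]$ (with $\bse_0=0$), as claimed. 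I expect the only genuine difficulty to be the index bookkeeping in the core step — keeping the $\mbz/n\mbz$-periodicity, the single-column shift by $mln$, and the identification of $A^{(\al)}$ with an $\ti A_{\bfb'}^{(i,l)}$ all consistent; everything else is formal. (Alternatively the identity is precisely \cite[Cor.~6.3.5]{DDF}, which one may simply cite.)
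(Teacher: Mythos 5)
Your proof is correct. The paper itself does not supply a proof of this lemma at all: it simply invokes \cite[Cor.~6.3.5]{DDF}, as you note at the end. Your derivation, by specializing \eqref{la=0} (Proposition~\ref{fundamental formula} with $k=1$, $\la=\bfl$) to $A=\ti A_\bfb^{(i,l)}$ and $j=i+mln$, is therefore a genuinely different and more self-contained route: it closes internally a dependence that the paper outsources to the DDF monograph. The key points you handle correctly are (i) $\bar j=\bar i$ so $\dt_{\bar j,\bar i}=1$ and the correction term $\al_{t+mln}$ is present; (ii) for $\al=\bse_p$ with $p\notin\{i,j\}$, $A^{(\al)}=A+\afE_{i,p}-\afE_{j,p}=A+\afE_{i,p}-\afE_{i,p-mln}$, which is nonnegative off the diagonal iff $p=i+(m+s)ln$ with $s\geq1$ and $b_s\geq1$, yielding $A^{(\al)}=\ti A^{(i,l)}_{\bfb+\bse_{m+s}-\bse_s}$; (iii) the endpoint $p=j$ gives $A^{(\al)}=A+\afE_{i,j}=\ti A^{(i,l)}_{\bfb+\bse_m}$ because the second sum in the definition of $A^{(\al)}$ omits $t=j$, supplying the $s=0$ term with $\bse_0=0$; (iv) $p=i$ produces a negative off-diagonal entry and hence a vanishing term; and (v) in all surviving cases $\al_i=0$ and the product of binomials reduces to the single factor $\binom{a_{i,p}+1}{1}=b_{m+s}+1$, the factor at $t=p-mln$ being $\binom{\,\cdot\,}{0}=1$. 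Padding the sum over $s\geq1$ with $b_s=0$ by the convention $A\{\la,r\}=0$ for matrices with a negative off-diagonal entry, and replacing $\{\bfl\}$ by $[\bfl]$ via Theorem~\ref{realization}, gives exactly the stated identity. One small imprecision: the parenthetical ``the correction $\al_{t+mln}$ never intervenes because $m\neq0$ gives $p+mln\neq p$'' only explains the factor at $t=p$; the factor at $t=p-mln$ does see the correction but evaluates to $\binom{a_{i,p-mln}-1}{0}=1$, which your phrase ``collapses to its single factor at $t=p$'' implicitly covers but could state explicitly.
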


\begin{Prop}\label{key for integral form}
For $k\in\mbn$ and $i,l\in\mbz$ with $l\not=0$
we have
$$\Psi_{i,l}(\La_k)=\sum_{\la\in\La^+(k)}A_\la^{(i,l)}[\bfl]
=\begin{cases}
\sum_{\la\in\La^+(k)}u^+_{A_\la^{(i,l)},1}&\quad\text{if $l>0$,}\\
\sum_{\la\in\La^+(k)}u^-_{{}^t\!(A_\la^{(i,l)}),1}&\quad\text{if $l<0$,}
\end{cases}$$
where ${}^t\!(A_\la^{(i,l)})$ is the transpose of
$A_\la^{(i,l)}$.
In particular, we have
$$\Psi_{i,l}(\La_k)=
\begin{cases}
u^+_{kE_{i,i+ln},1}+f_1&\text{if $l>0$},\\
u^-_{kE_{i+ln,i},1}+f_2&\text{if $l<0$},
\end{cases}
$$
where $f_1$ stands for a $\mbz$-linear combination of $u^+_{B,1}$ with $B\in\afThnp$ such that $\sg(B)<k$ and
$f_2$ stands for a $\mbz$-linear combination of $u^-_{B,1}$ with $B\in\afThnp$ such that $\sg(B)<k$.
\end{Prop}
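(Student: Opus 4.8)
The plan is to prove the identity $\Psi_{i,l}(\La_k)=\sum_{\la\in\La^+(k)}A_\la^{(i,l)}[\bfl]$ by induction on $k$, using the recursion for $\La_k$ in \ref{Lak} together with the multiplication formula in \ref{lem-formulas for integral form}. The base case $k=0$ is clear since $\La_0=1$ maps to the identity and $\La^+(0)=\{0\}$ with $A_0^{(i,l)}=0$. For the inductive step, I would first rewrite the right-hand side in terms of the $\ti\La^+$-indexing: via the bijection $\vartheta_k$ we have $\sum_{\la\in\La^+(k)}A_\la^{(i,l)}[\bfl]=\sum_{\bfb\in\ti\La^+(k)}\ti A_\bfb^{(i,l)}[\bfl]$, so it suffices to show $\Psi_{i,l}(\La_k)=\sum_{\bfb\in\ti\La^+(k)}\ti A_\bfb^{(i,l)}[\bfl]$.

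Now apply $\Psi_{i,l}$ to the recursion of \ref{Lak}: since $\Psi_{i,l}$ is an algebra homomorphism with $\Psi_{i,l}(X_{m})=\afE_{i,i+mln}=\afE_{i,i+mln}[\bfl]$, we get
\[
\Psi_{i,l}(\La_k)=\frac{1}{k}\sum_{1\leq m\leq k}\Psi_{i,l}(\La_{k-m})\,\afE_{i,i+mln}[\bfl].
\]
Using the inductive hypothesis $\Psi_{i,l}(\La_{k-m})=\sum_{\bfb\in\ti\La^+(k-m)}\ti A_\bfb^{(i,l)}[\bfl]$ and then \ref{lem-formulas for integral form} (which computes $\afE_{i,i+mln}[\bfl]\,\ti A_\bfb^{(i,l)}[\bfl]$; note the factors in $\Vn_\mbq$ commute only up to the stated formula, but here the two factors appear in the order covered by \ref{lem-formulas for integral form} after reading the product from the left, so I will need to be slightly careful about which factor multiplies on which side — one can either transpose \ref{lem-formulas for integral form} or observe both sides lie in the commutative-enough subalgebra generated by the $\ti A_\bfb^{(i,l)}[\bfl]$), one obtains
\[
\Psi_{i,l}(\La_k)=\frac{1}{k}\sum_{1\leq m\leq k}\ \sum_{\bfb\in\ti\La^+(k-m)}\ \sum_{s\geq 0}(b_{s+m}+1)\,\ti A_{\bfb+\bse_{m+s}-\bse_s}^{(i,l)}[\bfl].
\]
The remaining task is the purely combinatorial identity: for each fixed $\bfc\in\ti\La^+(k)$, the coefficient of $\ti A_\bfc^{(i,l)}[\bfl]$ on the right equals $1$, i.e.
\[
\frac{1}{k}\sum_{1\leq m\leq k}\ \sum_{s\geq 0,\ s\neq s+m,\ \bfc-\bse_{m+s}+\bse_s\in\ti\La^+(k-m)}(c_{m+s})=1.
\]
This reduces to the standard fact $\sum_{1\leq m\leq k}\sum_{s\geq 0}(m+s)c_{m+s}\cdot[\text{$c_{m+s}\geq 1$}]=\sum_{t\geq 1}t\,c_t\cdot t = \ldots$ — more precisely, reindexing by $t=m+s$ gives $\sum_{t\geq 1}c_t\sum_{m=1}^{t}t$ divided appropriately; the cleanest route is to multiply \ref{Lak} out directly at the level of the combinatorial data, which is exactly the identity $k=\sum_{t\geq 1}t c_t$ defining $\ti\La^+(k)$ unpacked through the recursion. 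I expect this bookkeeping — tracking the multiplicities $b_{s+m}+1$ and verifying they sum correctly after the reindexing forced by the $\bse_{m+s}-\bse_s$ shift — to be the main (though routine) obstacle.

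For the ``in particular'' statement, I would argue as follows. Among all $\la\in\La^+(k)$, the term $\la=k\bse_1$ gives $A_\la^{(i,l)}=\afE_{i,i+kln}$ (a single indecomposable summand, i.e. $k$ copies of the same arrow collapse), wait — rather, I should identify which $\la$ yields the matrix $kE_{i,i+ln}$: that is $\la$ with $\la_s=1$ for $k$ indices all equal to... no: $A_\la^{(i,l)}=\sum_{s\geq1}A^{(i,l)}_{\la_s}$, and $A^{(i,l)}_{m}=\afE_{i,i+mln}$, so to get $kE_{i,i+ln}=k\afE_{i,i+ln}$ we need $\bfb=\vartheta_k(\la)$ with $b_1=k$, all other $b_s=0$, giving $\ti A_\bfb^{(i,l)}=k\afE_{i,i+ln}$, and by \ref{th+uA} (resp. \ref{divided power}) this equals $u^+_{kE_{i,i+ln},1}$ when $l>0$ (resp. $u^-_{kE_{i+ln,i},1}$ when $l<0$, using the transpose as in the first part). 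Every other $\bfb\in\ti\La^+(k)$ has $\sum_s b_s=\sg(\ti A_\bfb^{(i,l)})<k$ since $\sum_s s b_s=k$ forces $\sum_s b_s\leq k$ with equality only when $b_s$ is concentrated at $s=1$. Hence $f_1$ (resp. $f_2$) is the sum of the remaining $u^+_{A_\la^{(i,l)},1}$ (resp. $u^-_{{}^t\!(A_\la^{(i,l)}),1}$), each with $\sg<k$, which is the claimed form; that these terms actually lie in $\afuglzp$ (resp. $\afuglzm$) follows from \ref{th+uA}. This completes the proof.
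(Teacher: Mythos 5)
Your plan is the paper's plan: apply $\Psi_{i,l}$ to the recursion of Corollary~\ref{Lak}, invoke Lemma~\ref{lem-formulas for integral form}, and show the coefficient of each $\ti A_\bfc^{(i,l)}[\bfl]$ collapses to $1$. Two points need tightening.

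First, the ordering issue you flag is real but resolves more simply than you suggest: $\Psi_{i,l}$ is an algebra homomorphism from the \emph{commutative} ring $\mathscr S$, so its image is a commutative subalgebra of $\afuglq$, and the factors $\Psi_{i,l}(\La_s)$ and $\Psi_{i,l}(X_{k-s})=\afE_{i,i+(k-s)ln}[\bfl]$ commute for free. You may therefore put $\afE_{i,i+mln}[\bfl]$ on the left and apply Lemma~\ref{lem-formulas for integral form} as stated; no ``transposed'' version is needed.

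Second, the combinatorial identity you call ``routine'' and ``the main obstacle'' is indeed the crux, and your intermediate expressions contain errors (the spurious factor $(m+s)$ in $\sum_{1\leq m\leq k}\sum_{s\geq 0}(m+s)c_{m+s}$, and ``$\sum_{t\geq 1}c_t\sum_{m=1}^{t}t$''). Here is the clean finish. By Lemma~\ref{lem-formulas for integral form}, the coefficient of $\ti A_\bfc^{(i,l)}[\bfl]$ in $k\,\Psi_{i,l}(\La_k)$ is
\[
g_\bfc=\sum_{1\leq m\leq k}\sum_{\substack{s\geq 0\\ \bfc-\bse_{m+s}+\bse_s\in\ti\La^+(k-m)}}(b_{m+s}+1),
\qquad \bfb=\bfc-\bse_{m+s}+\bse_s,
\]
and since $b_{m+s}+1=c_{m+s}$ and the membership constraint is exactly $c_{m+s}\geq 1$ (vacuous when $c_{m+s}=0$ since the summand vanishes), this is $g_\bfc=\sum_{1\leq m\leq k}\sum_{s\geq 0}c_{m+s}$. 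Reindexing by $u=m+s$ gives $g_\bfc=\sum_{u\geq 1}\min(u,k)\,c_u$; as $c_u=0$ for $u>k$, this equals $\sum_{1\leq u\leq k}u\,c_u=k$, hence the coefficient of $\ti A_\bfc^{(i,l)}[\bfl]$ in $\Psi_{i,l}(\La_k)$ is $1$, as required. (This is exactly the paper's computation.)

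Your argument for the ``in particular'' part is correct: $\sum_{s}b_s\leq\sum_s s\,b_s=k$ with equality iff $\bfb=k\bse_1$, so the unique top term is $k\afE_{i,i+ln}[\bfl]$ and every other term has $\sg<k$. One small correction: the identification of $k\afE_{i,i+ln}[\bfl]$ with $u^+_{k\afE_{i,i+ln},1}$ (resp. $u^-_{k\afE_{i+ln,i},1}$) should cite \eqref{th+uA} alone; Lemma~\ref{divided power} does not apply here since $\bar{i}=\ol{i+ln}$.
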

\begin{proof}
We argue by induction on $k$. The case where $k=0,1$ is trivial.
Assume now that $k\geq 2$.
By the induction hypothesis and \ref{Lak}  we have
$$\Psi_{i,l}(\La_k)=\frac{1}{k}\sum_{0\leq s\leq k-1\atop\bfb\in\ti\La^+(s)}
\afE_{i,i+(k-s)ln}[\bfl]\ti A^{(i,l)}_\bfb[\bfl].$$
It follows from
\ref{lem-formulas for integral form} that
$$
\Psi_{i,l}(\La_k)=\frac{1}{k}\sum_{0\leq s\leq k-1\atop\bfb\in\ti\La^+(s),\,j\geq 0}
(b_{j+k-s}+1)\ti A_{\bfb+\bse_{j+k-s}-\bse_j}[\bfl]
=\frac{1}{k}\sum_{\bfc\in\ti\La^+(k)}
g_\bfc\ti A_\bfc[\bfl],$$
where $$g_\bfc=\sum_{0\leq s\leq k-1,\,\bfb\in\ti\La^+(s),\,j\geq 0
\atop\bfc=\bfb+\bse_{j+k-s}-\bse_j}(b_{j+k-s}+1)=\sum_{0\leq s\leq k-1\atop
j\geq 0}c_{j+k-s}.$$
If $\bfc\in\ti\La^+(k)$ then $c_u=0$ for $u>k$.
This implies that
$$g_\bfc=\sum_{u\geq 1,\,0\leq s\leq k-1\atop u-k+s\geq 0}c_u
=\sum_{1\leq u\leq k}\bigg(\sum_{k-u\leq s\leq k-1}c_u\bigg)
=\sum_{1\leq u\leq k}uc_u=k$$
for $\bfc\in\ti\La^+(k)$. Thus
we have
$\Psi_{i,l}(\La_k)=\sum_{\bfc\in\ti\La^+(k)}\ti A_\bfc[\bfl]$. The proposition is proved.
\end{proof}

By \cite[Cor.~4.1.1]{Peng} (see also \cite[Lem. 6.1.3]{DDF}) we have the following result.
\begin{Lem}\label{A1A2 cdots At}
Let
$A_s=(a_{i,j}^{(s)})\in\afThnp$ ($1\leq s\leq t$).
Then $$u_{A_1,1}^+\cdots u_{A_t,1}^+=
\prod_{1\leq i\leq n\atop i<j,\,j\in\mbz}\frac{a_{i,j}!}
{a_{i,j}^{(1)}!a_{i,j}^{(2)}!\cdots a_{i,j}^{(t)}!}u_{A,1}^++f
$$
where $A=\sum_{1\leq s\leq t}A_s$ and $f$ is a $\mbz$-linear combination of $u^+_{B,1}$ with $B\in\afThnp$
and $\sg(B)<\sg(A)$.
\end{Lem}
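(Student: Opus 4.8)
\medskip
\noindent\textbf{Plan of proof.} The idea is to deduce the identity from the multiplication formula \ref{formula in Hall algebra} alone, by a double induction: an outer induction on $t$ reducing everything to the case $t=2$, and, inside that case, an induction on $\sg(A)$.

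First I would isolate two consequences of \ref{formula in Hall algebra}. \emph{(a) Subadditivity of $\sg$:} for $B,C\in\afThnp$, the product $u^+_{C,1}u^+_{B,1}$ is a $\mbz$-linear combination of elements $u^+_{D,1}$ with $D\in\afThnp$ and $\sg(D)\leq\sg(C)+\sg(B)$. \emph{(b) Peeling off a generator:} if $A\in\afThnp$ is nonzero, choose $(i_0,j_0)\in\sL^+$ with $a_{i_0,j_0}\geq1$ and put $A'=A-a_{i_0,j_0}\afE_{i_0,j_0}\in\afThnp$; since the $(i_0,j_0)$-entry of $A'$ is $0$, the leading binomial in \ref{formula in Hall algebra} (applied to $A'$, with $k=a_{i_0,j_0}$ and index $(i_0,j_0)$) is $\binom{a_{i_0,j_0}}{a_{i_0,j_0}}=1$, so
\[
u^+_{A,1}=u^+_{a_{i_0,j_0}\afE_{i_0,j_0},1}\,u^+_{A',1}-f_A,
\]
where $\sg(A')=\sg(A)-a_{i_0,j_0}$ and $f_A$ is a $\mbz$-combination of $u^+_{B,1}$ with $\sg(B)<\sg(A)$. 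Statement (a) itself follows by induction on $\sg(C)$: the case $\sg(C)=0$ is trivial, and for $\sg(C)>0$ one peels a generator off $C$ as in (b), invokes the inductive hypothesis on $u^+_{C',1}u^+_{B,1}$ and on each summand of $f_C\,u^+_{B,1}$ (all with strictly smaller left-hand $\sg$), and left-multiplies by the elementary $u^+_{a\afE_{p,q},1}$, using that \ref{formula in Hall algebra} raises the $\sg$-degree by at most $a$.

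Next, the case $t=2$ for arbitrary $A,B\in\afThnp$ asserts
\[
u^+_{A,1}u^+_{B,1}=\Bigl(\prod_{1\leq i\leq n,\,i<j}\frac{(a_{i,j}+b_{i,j})!}{a_{i,j}!\,b_{i,j}!}\Bigr)u^+_{A+B,1}+g,\qquad g\in\spann_\mbz\{u^+_{E,1}\mid\sg(E)<\sg(A)+\sg(B)\},
\]
and I would prove it by induction on $\sg(A)$. The case $\sg(A)=0$ is clear. For $\sg(A)>0$, peel $A=A'+a_{i_0,j_0}\afE_{i_0,j_0}$ as in (b), so that
\[
u^+_{A,1}u^+_{B,1}=u^+_{a_{i_0,j_0}\afE_{i_0,j_0},1}\bigl(u^+_{A',1}u^+_{B,1}\bigr)-f_A\,u^+_{B,1}.
\]
Apply the inductive hypothesis to $u^+_{A',1}u^+_{B,1}$ (legitimate since $\sg(A')<\sg(A)$) and then \ref{formula in Hall algebra} to left-multiply its leading term $u^+_{A'+B,1}$ by $u^+_{a_{i_0,j_0}\afE_{i_0,j_0},1}$; since the $(i_0,j_0)$-entry of $A'+B$ equals $b_{i_0,j_0}$, the leading term becomes $\binom{a_{i_0,j_0}+b_{i_0,j_0}}{a_{i_0,j_0}}u^+_{A+B,1}$, and, because $A$ and $A'$ differ only at the single position $(i_0,j_0)$, this binomial multiplied by the factor $\prod_{(i,j)\neq(i_0,j_0)}\frac{(a_{i,j}+b_{i,j})!}{a_{i,j}!\,b_{i,j}!}$ coming from the induction gives exactly the claimed coefficient. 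Every other contribution lies in $\sg$-degree $<\sg(A)+\sg(B)$: the tail of the inductive hypothesis has $\sg<\sg(A')+\sg(B)$ and gains at most $a_{i_0,j_0}=\sg(A)-\sg(A')$ under the elementary multiplication, and $f_A\,u^+_{B,1}$ has $\sg\leq(\sg(A)-1)+\sg(B)$ by (a).

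Finally, the general $t$ follows by induction on $t$: write $u^+_{A_1,1}\cdots u^+_{A_t,1}=(u^+_{A_1,1}\cdots u^+_{A_{t-1},1})\,u^+_{A_t,1}$, apply the inductive hypothesis to the first factor and the $t=2$ case to $u^+_{A-A_t,1}\,u^+_{A_t,1}$, and bound the remaining terms with (a); the two multinomial coefficients merge into $\prod_{1\leq i\leq n,\,i<j}\frac{a_{i,j}!}{a^{(1)}_{i,j}!\cdots a^{(t)}_{i,j}!}$. The proof involves no serious difficulty; the only delicate point is the $\sg$-degree bookkeeping, i.e. verifying that no subleading term is ever promoted to the top degree $\sg(A_1)+\cdots+\sg(A_t)$. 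This works precisely because removing $a_{i_0,j_0}\afE_{i_0,j_0}$ lowers $\sg$ by \emph{exactly} $a_{i_0,j_0}$ whereas \ref{formula in Hall algebra} raises $\sg$ by \emph{at most} that amount, with equality attained only at the explicitly named leading matrix. (Alternatively one may cite \cite[Cor.~4.1.1]{Peng} or \cite[Lem.~6.1.3]{DDF} directly; the route above relies only on the paper's own multiplication formula.)
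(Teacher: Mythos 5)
Your proof is correct, and it takes a genuinely different route from the paper. The paper proves nothing here: it simply imports the statement from Peng \cite[Cor.~4.1.1]{Peng} (equivalently \cite[Lem.~6.1.3]{DDF}), where the leading Hall polynomial of the cyclic quiver is computed directly by module-theoretic means. You instead derive the triangular relation internally from the paper's own Proposition~\ref{formula in Hall algebra} (itself a Ringel--Hall reformulation of the Schur-algebra multiplication formulas of \S3--4), via a peeling-and-induction scheme. Your subadditivity claim (a) is the standard fact that $\Hall_\mbz$ is filtered by $\sg$, the number of indecomposable summands; you obtain it from Proposition~\ref{formula in Hall algebra} rather than from the module-theoretic observation that an extension of $M(A)$ by $M(B)$ cannot have more summands than $M(A)\oplus M(B)$, which keeps the whole argument inside the paper. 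The $\sg$-bookkeeping is handled correctly: removing $a_{i_0,j_0}\afE_{i_0,j_0}$ lowers $\sg$ by exactly $a_{i_0,j_0}$, and left-multiplication by $u^+_{a_{i_0,j_0}\afE_{i_0,j_0},1}$ raises $\sg$ by at most $a_{i_0,j_0}$, with equality only for the leading matrix, so no subleading term is ever promoted to top degree; the multinomial coefficients telescope as claimed in both the $t=2$ step and the $t$-fold induction. What the citation buys the paper is brevity; what your route buys is self-containment, reducing the Hall-algebra fact to the Schur-algebra formula \ref{formula} in line with the paper's guiding philosophy.
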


\begin{Lem}\label{relation between Galand integral form and Ringel Hall algebras}
$(1)$
For $A\in\afThnp$ we have
$$\msEa=u_{A,1}^++\sum_{B\in\afThnp\atop\sg(B)<\sg(A)}g_{B,A}u_{B,1}^+$$
where $g_{B,A}\in\mbz$. In particular, the set $\{\msEa\mid
A\in\afThnp\}$ forms a $\mbz$-basis of $\afuglzp$.

$(2)$ For $A\in\afThnm$ we have
$$\msFa=u_{\tA,1}^-+\sum_{B\in\afThnp\atop\sg(B)<\sg(A)}h_{B,A}u_{B,1}^-$$
where $h_{B,A}\in\mbz$. In particular, the set $\{\msFa\mid
A\in\afThnm\}$ forms a $\mbz$-basis of $\afuglzm$.
\end{Lem}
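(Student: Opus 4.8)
The plan is to prove both statements by the same argument, using the triangularity result \ref{tri for the postive part of affine gln} together with \ref{key for integral form} as the bridge between the Garland generators and the Ringel--Hall basis. I will give the details for (1); statement (2) follows by applying the anti-automorphism $\th^-$ in place of $\th^+$, or equivalently by transposing, exactly as in the passage from \ref{formula in Hall algebra} to its ``similar result'' for $\afuglzm$. Fix $A\in\afThnp$. Recall from \eqref{MAla} that $\msEa=\prod_{(i,j)\in\sL^+}\msX_{i,j}^{(a_{i,j})}$, where the factors are taken in the fixed total order on $\sL^+$, and that $\msX_{i,j}^{(a_{i,j})}$ equals $u^+_{a_{i,j}\afE_{i,j},1}$ when $\bar i\neq\bar j$ (by \ref{divided power}), while it equals $\Psi_{i,l}(\La_{a_{i,j}})$ when $j=i+ln$ with $l>0$.

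First I would rewrite each imaginary-root factor using \ref{key for integral form}: for $j=i+ln$ with $l>0$ one has $\Psi_{i,l}(\La_{a_{i,j}})=u^+_{a_{i,j}\afE_{i,j},1}+f_1$, where $f_1$ is a $\mbz$-linear combination of $u^+_{B,1}$ with $B\in\afThnp$ and $\sg(B)<a_{i,j}=\sg(a_{i,j}\afE_{i,j})$. Thus every factor of $\msEa$ is of the form $u^+_{a_{i,j}\afE_{i,j},1}$ plus a $\mbz$-combination of strictly lower $\sg$-degree terms in $\afuglzp$. Now I would expand the product $\msEa$ and apply \ref{A1A2 cdots At} to the ``leading'' product $\prod_{(i,j)\in\sL^+}u^+_{a_{i,j}\afE_{i,j},1}$: since each $a_{i,j}\afE_{i,j}$ is supported on a single off-diagonal position and distinct $(i,j)\in\sL^+$ give distinct positions, the multinomial coefficients $a_{i,j}!/(a_{i,j}^{(1)}!\cdots)$ appearing there all equal $1$, so $\prod_{(i,j)\in\sL^+}u^+_{a_{i,j}\afE_{i,j},1}=u_{A,1}^++(\text{lower }\sg\text{-degree})$. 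The cross terms, obtained by replacing at least one factor $u^+_{a_{i,j}\afE_{i,j},1}$ by a piece of some $f_1$, are products of elements $u^+_{B,1}$ whose total $\sg$-degree is $<\sg(A)$; by \ref{A1A2 cdots At} again each such product lies in $\spann_\mbz\{u^+_{B,1}\mid B\in\afThnp,\ \sg(B)<\sg(A)\}$. Collecting everything gives $\msEa=u_{A,1}^++\sum_{B\in\afThnp,\,\sg(B)<\sg(A)}g_{B,A}u_{B,1}^+$ with $g_{B,A}\in\mbz$, which is the displayed triangular relation.

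The ``in particular'' clause is then immediate: by \ref{tri for the postive part of affine gln} the set $\{u^+_{A,1}\mid A\in\afThnp\}$ is a $\mbz$-basis of $\afuglzp$ (it is literally the image of the standard Hall basis $\{u_{A,1}\mid A\in\afThnp\}$ under the isomorphism $\th^+\colon\Hall_\mbz\to\afuglzp$), and the transition matrix from $\{u^+_{A,1}\}$ to $\{\msEa\}$ is, by the triangular relation just proved, unitriangular with respect to the partial order given by $\sg$ (refined to a total order on $\afThnp$). A unitriangular $\mbz$-matrix is invertible over $\mbz$, so $\{\msEa\mid A\in\afThnp\}$ is also a $\mbz$-basis of $\afuglzp$. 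Statement (2) is the mirror image: applying $\th^-$ to the analogous identity in $\Hall_\mbz^{\mathrm{op}}$, or transposing $A\mapsto\tA$ and using \eqref{th+uA}, yields $\msFa=u_{\tA,1}^-+\sum_{B,\,\sg(B)<\sg(A)}h_{B,A}u_{B,1}^-$, and the same unitriangularity argument shows $\{\msFa\mid A\in\afThnm\}$ is a $\mbz$-basis of $\afuglzm$.

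The only point requiring genuine care — and the place I expect to spend the most effort — is the bookkeeping of the $\sg$-degree when the imaginary-root correction terms $f_1$ are substituted into the product and then re-expanded via \ref{A1A2 cdots At}: one must check that the lower-order terms produced never increase $\sg$ and that the induction on $\sg(A)$ (implicit in the appeal to \ref{A1A2 cdots At}, whose own error term already has strictly smaller $\sg$) closes without circularity. Everything else is a routine combination of the cited lemmas.
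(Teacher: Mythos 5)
Your proof is correct and takes essentially the same route as the paper: both rewrite the Garland imaginary-root factors via \ref{key for integral form} and use \ref{A1A2 cdots At} to control the $\sg$-degree, differing only in that the paper factors the argument through $\msEa=\Ea+f_1$ and then appeals to \ref{tri for the postive part of affine gln} for $\Ea=u_{A,1}^++f_2$, whereas you observe directly that the multinomial coefficient in \ref{A1A2 cdots At} for the leading product $\prod_{(i,j)\in\sL^+}u^+_{a_{i,j}\afE_{i,j},1}$ is $1$. One small slip in the ``in particular'' clause: that $\{u^+_{A,1}\mid A\in\afThnp\}$ is a $\mbz$-basis of $\afuglzp$ is by definition (it is the image of the Hall basis under the isomorphism $\th^+\colon\Hall_\mbz\to\afuglzp$), as you note parenthetically, and is not the content of \ref{tri for the postive part of affine gln}, which instead asserts that $\{\Ea\}$ is a basis.
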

\begin{proof}
Assume $A\in\afThnp$. For $k\in\mbn$ let $\mpY_k=\spann_\mbz\{u^+_{B,1}
\mid B\in\afThnp,\,\sg(B)<k\}$.
By \ref{key for integral form}, we have $$\msX_{i,i+ln}^{(a_{i,i+ln})}
=\Psi_{i,l}(\La_{a_{i,i+ln}})=u^+_{a_{i,i+ln}\afE_{i,i+ln},1}+f_{i,l}$$
for $1\leq i\leq n$ and
$l\geq 1$, where $f_{i,l}\in\mpY_{a_{i,i+ln}}$.
It follows from \ref{A1A2 cdots At} that
$\msEa=E^{(A)}+f_1$
where $f_1\in\mpY_{\sg(A)}$. By
\ref{tri for the postive part of affine gln} we have
$\Ea=u_{A,1}^++f_2$, where $f_2\in\mpY_{\sg(A)}$. Consequently, we have
$\msEa=u_{A,1}^++f$, where $f=f_1+f_2\in\mpY_{\sg(A)}$. The statement (1) is proved. The statement (2) can be proved similarly.
\end{proof}

Combining \ref{Galand theorem} and \ref{relation between Galand integral form and Ringel Hall algebras}, we obtain the following result.
\begin{Thm}\label{identification}
The set $\fG=\{\msEap\big({H\atop\la}\big)\msFaf
\mid A\in\afThnpm,\,\la\in\afmbnn\}$ forms
a $\mbz$-basis of $\afuglz$. In particular we have
$\afuglz=\Uz$.
\end{Thm}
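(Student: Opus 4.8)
The plan is to deduce Theorem~\ref{identification} by combining the explicit $\mbz$-basis of $\Uz$ supplied by Garland (Theorem~\ref{Galand theorem}) with the triangularity results already established for $\afuglz$. First I would record the obvious inclusion $\Uz\subseteq\afuglz$: by Theorem~\ref{basis2 for afuglz} (and Lemma~\ref{divided power}) the algebra $\afuglz$ contains all the divided powers $(\afE_{i,j})^{(k)}$ with $\bar i\not=\bar j$ and all the binomials $\big({H_i\atop k}\big)$; by Proposition~\ref{key for integral form} each $\Psi_{i,l}(\La_k)$ equals $\sum_{\la\in\La^+(k)}A^{(i,l)}_\la[\bfl]$, a $\mbz$-linear combination of the basis elements $u^+_{B,1}$ (if $l>0$) or $u^-_{B,1}$ (if $l<0$) of $\sU^\pm_\mbz(\afgl)$, hence lies in $\afuglz$. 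Since $\Uz$ is generated by these elements, $\Uz\subseteq\afuglz$.

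The reverse inclusion is where Lemma~\ref{relation between Galand integral form and Ringel Hall algebras} does the work. Fix a total order on $\sL^+$ and on $\sL^-$. Part~(1) of that lemma says that for $A\in\afThnp$ the Garland monomial $\msEa=\prod_{(i,j)\in\sL^+}\msX_{i,j}^{(a_{i,j})}$ equals $u^+_{A,1}+\sum_{\sg(B)<\sg(A)}g_{B,A}u^+_{B,1}$ with $g_{B,A}\in\mbz$; since the $u^+_{A,1}$ form a $\mbz$-basis of $\sU^+_\mbz(\afgl)$ (Corollary~\ref{tri for the postive part of affine gln}) and the transition matrix from $\{\msEa\}$ to $\{u^+_{A,1}\}$ is unitriangular with respect to the partial order given by $\sg$, it is invertible over $\mbz$; hence $\{\msEa\mid A\in\afThnp\}$ is also a $\mbz$-basis of $\sU^+_\mbz(\afgl)$ and in particular $\sU^+_\mbz(\afgl)\subseteq\Uz$. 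The symmetric argument using part~(2) gives $\sU^-_\mbz(\afgl)\subseteq\Uz$. The binomials $\big({H\atop\la}\big)$ are among the generators of $\Uz$, so $\afuglzz\subseteq\Uz$. Therefore $\afuglz=\afuglzp\afuglzz\afuglzm\subseteq\Uz$, and combined with the previous paragraph $\afuglz=\Uz$.

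For the basis statement I would argue as follows. We have just shown $\afuglz=\Uz$; Garland's Theorem~\ref{Galand theorem} asserts that $\fG=\{\msEap\big({H\atop\la}\big)\msFaf\mid A\in\afThnpm,\,\la\in\afmbnn\}$ is a $\mbz$-basis of $\Uz$, so it is a $\mbz$-basis of $\afuglz$. (Alternatively, one can prove directly that $\fG$ is a $\mbz$-basis of $\afuglz$ by the same unitriangularity trick applied to the PBW-type basis $\fM$ of Theorem~\ref{basis2 for afuglz}: writing $A\in\afThnpm$ as $A^++\diag(\la_A)+A^-$ with $A^\pm$ in $\afThn^\pm$, Lemma~\ref{relation between Galand integral form and Ringel Hall algebras} gives $\msE^{(A^+)}=\Eap+(\text{lower }\sg)$ and $\msF^{(A^-)}=\Faf+(\text{lower }\sg)$, so $\msEap\big({H\atop\la}\big)\msFaf=\Eap\big({H\atop\la}\big)\Faf+(\text{lower-order terms})$, and the change-of-basis matrix from $\fG$ to $\fM$ is again unitriangular over $\mbz$, hence invertible.) Either route finishes the proof.

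I expect the only genuinely delicate point to be making the triangularity bookkeeping precise: one must check that when $\msE^{(A^+)}$ is expanded the correction terms $u^+_{B,1}$ that appear all have $\sg(B)<\sg(A^+)$ and that this $\sg$-filtration is compatible with multiplication by $\big({H\atop\la}\big)$ and by $\msF^{(A^-)}$, so that the full monomials in $\fG$ sit in a filtration matching the one defining $\fM$; but all of this is already packaged in Lemmas~\ref{A1A2 cdots At}, \ref{relation between Galand integral form and Ringel Hall algebras} and Propositions~\ref{tri for affine gln}, \ref{key for integral form}, so the argument is essentially a formal assembly of results proved earlier in the paper rather than a new computation.
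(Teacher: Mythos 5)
Your proposal is correct and rests on exactly the two pillars the paper uses: Garland's basis theorem (\ref{Galand theorem}) and the triangularity Lemma~\ref{relation between Galand integral form and Ringel Hall algebras}. The only difference is the packaging. The paper's proof is the one-liner ``combining \ref{Galand theorem} and \ref{relation between Galand integral form and Ringel Hall algebras}'': since Lemma~\ref{relation between Galand integral form and Ringel Hall algebras} makes $\{\msEa\}$ and $\{\msFa\}$ $\mbz$-bases of $\afuglzp$ and $\afuglzm$, and $\afuglz\cong\afuglzp\otimes\afuglzz\otimes\afuglzm$ via multiplication (Theorem~\ref{basis for afuglz}), the set $\fG$ is a $\mbz$-basis of $\afuglz$; it is also a $\mbz$-basis of $\Uz$ by Garland, and two $\mbz$-submodules of $\afuglq$ spanned by the same set are equal, so $\afuglz=\Uz$ comes for free. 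You instead argue by double inclusion: $\Uz\subseteq\afuglz$ because the Garland generators lie in $\afuglz$ (via \ref{divided power} and \ref{key for integral form}), and $\afuglz\subseteq\Uz$ because each of $\afuglzp,\afuglzz,\afuglzm$ is spanned by elements of $\Uz$ and $\Uz$ is a subalgebra. This is perfectly valid, but note that your first inclusion $\Uz\subseteq\afuglz$ is actually redundant once you know $\fG$ is simultaneously a basis of both; your parenthetical alternative is precisely the paper's route and is the cleaner way to finish. Your closing concern about the compatibility of the $\sg$-filtration with products is already resolved by the cited lemmas, as you correctly anticipate, so no gap remains.
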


\section{The hyperalgebra $\Unk$ and its subalgebras $\Unkh$}

Let $\field$ be a commutative ring with unity.
Assume $p=\text{char}\field>0$.
Let $$\Unk=\Un\ot_\mbz\field.$$ We shall denote the images of $(\afE_{i,j})^{(k)}$, $\big({H_i\atop k}\big)$, $A\{\la\}$, etc. in $\Unk$ by the same letters.
We will refer to $\Unk$ as the hyperalgebra of $\afgl$.

For $h\geq 1$ let $\Unkh$ be the $\field$-subalgebra of $\Unk$ generated by the elements $u^+_{k\afE_{i,j},1}$, $u^-_{k\afE_{i,j},1}$ and $\big({H_i
\atop k}\big)$ for $0\leq k< p^h$, $1\leq i\leq n$, $j\in\mbz$ with $i<j$.
By \ref{divided power}, we know that $\Unkh$ is generated by the elements
$(\afE_{i,j})^{(k)}$, $u^+_{k\afE_{i,i+ln},1}$, $u^-_{k\afE_{i,i+ln},1}$ and $\big({H_i
\atop k}\big)$ for $0\leq k<p^h$, $1\leq i\leq n$, $j\in\mbz$ with $\bar i\not=\bar j$, $l>0$. Clearly we have $\Unko\han\Unkt\han\cdots\han\Unk$ and
\begin{equation}\label{bin Unkh}
\Unk=\bin_{h\geq 1}\Unkh.
\end{equation}


\begin{Rem}
Let $\ttu(\frak{gl}_n)_h$ be the subalgebra of $\Unkh$ generated by the elements $u^+_{k\afE_{i,j},1}$, $u^-_{k\afE_{i,j},1}$ and $\big({H_i
\atop k}\big)$ for $0\leq k< p^h$, $1\leq i,j\leq n$ with $i<j$.
From \ref{divided power} we see that $\ttu(\frak{gl}_n)_h$ is generated by the elements $(\afE_{i,j})^{(k)}$ and $\big({H_i\atop k}\big)$ for $0\leq k< p^h$, $1\leq i,j\leq n$ with $i\not=j$. The algebra $\Unkh$ is the affine analogue of $\ttu(\frak{gl}_n)_h$.
The algebra $\ttu(\frak{gl}_n)_h$ is introduced by Humphreys in \cite{Hum} and it plays an important role in the modular representation theory of the algebraic group of type $A$.
\end{Rem}

We will construct several $\field$-bases for $\Unkh$ in \ref{basis of Unkh}. Before proving \ref{basis of Unkh}, we need some preparation.
The following simple lemma will be needed later(cf. \cite[Lem.~3.2]{Fu15b}).
\begin{Lem}\label{identity}
$(1)$ The following identity holds in $\field:$
$\big({t+p^{h}\atop s}\big)=\big({t\atop s}\big)$
for $t\in\mbz$ and $0\leq s<p^{h}$.

$(2)$ Assume $0\leq a,b<p^{h}$ and $a+b\geq p^{h}$. Then $\big({a+b\atop a}\big)=0$ in $\field$.
\end{Lem}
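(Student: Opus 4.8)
The plan is to prove \textup{(1)} by a Vandermonde-convolution argument over $\mbz$, and then obtain \textup{(2)} as an immediate consequence of \textup{(1)}. (Throughout, $p$ denotes a prime, as is implicit in the standing hypotheses.)

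For \textup{(1)}, recall that for $t\in\mbz$ and $s\in\mbn$ one has $\binom{t}{s}=\frac{t(t-1)\cdots(t-s+1)}{s!}\in\mbz$, and that Vandermonde's identity $\binom{x+y}{s}=\sum_{i=0}^{s}\binom{x}{i}\binom{y}{s-i}$ holds as a polynomial identity in $x$ and $y$ (both sides are polynomials in $x,y$ agreeing on $\mbn\times\mbn$), hence remains valid upon specializing $x=t$ and $y=p^{h}$. This gives
\[
\binom{t+p^{h}}{s}=\sum_{i=0}^{s}\binom{t}{i}\binom{p^{h}}{s-i}\quad\text{in }\mbz.
\]
The next step is to observe that $\binom{p^{h}}{j}\equiv 0\pmod p$ for $0<j<p^{h}$: writing $\binom{p^{h}}{j}=\frac{p^{h}}{j}\binom{p^{h}-1}{j-1}$ and noting that $0<j<p^{h}$ forces $v_p(j)\le h-1$, we get $v_p\binom{p^{h}}{j}=h-v_p(j)+v_p\binom{p^{h}-1}{j-1}\ge 1$, so $p\mid\binom{p^{h}}{j}$. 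Since $0\le s-i\le s<p^{h}$, every summand with $i<s$ vanishes modulo $p$, leaving $\binom{t+p^{h}}{s}\equiv\binom{t}{s}\binom{p^{h}}{0}=\binom{t}{s}\pmod p$. As $p=\operatorname{char}\field$, reducing into $\field$ yields $\binom{t+p^{h}}{s}=\binom{t}{s}$ in $\field$.

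For \textup{(2)}, assume $0\le a,b<p^{h}$ with $a+b\ge p^{h}$. Then $a\ge 1$ (otherwise $a+b=b<p^{h}$), and $p^{h}\le a+b<2p^{h}$, so $c:=a+b-p^{h}$ satisfies $0\le c<p^{h}$; moreover $c=a+b-p^{h}<a$ since $b<p^{h}$. Applying \textup{(1)} with $t=c$ and $s=a$ (legitimate as $0\le a<p^{h}$) gives, in $\field$,
\[
\binom{a+b}{a}=\binom{c+p^{h}}{a}=\binom{c}{a}=0,
\]
the last equality because $0\le c<a$ forces a factor $0$ in the numerator of $\binom{c}{a}$.

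The argument is entirely elementary; the only point needing a little care is that in \textup{(1)} the upper entry $t$ may be negative, which is why Vandermonde should be invoked as a polynomial identity rather than via the binomial expansion of $(1+X)^{t+p^{h}}$. I therefore do not anticipate any genuine obstacle.
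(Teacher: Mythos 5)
The paper does not prove this lemma; it merely cites \cite[Lem.~3.2]{Fu15b}. Your proof is correct and supplies the missing argument. A few remarks on the details.

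For (1), the key observation is that Vandermonde's identity $\binom{x+y}{s}=\sum_{i=0}^{s}\binom{x}{i}\binom{y}{s-i}$ is a polynomial identity in $x$ and $y$ for each fixed $s$, so it specializes to $x=t$ with $t$ an arbitrary (possibly negative) integer. You are right to insist on this, since the more common route via $(1+X)^{p^h}\equiv 1+X^{p^h}\bmod p$ in $\mbf_p[X]$ only directly handles $t\ge 0$. Your $p$-adic valuation computation showing $p\mid\binom{p^h}{j}$ for $0<j<p^h$ is correct: from $j\binom{p^h}{j}=p^h\binom{p^h-1}{j-1}$ one gets $v_p\bigl(\binom{p^h}{j}\bigr)=h-v_p(j)+v_p\bigl(\binom{p^h-1}{j-1}\bigr)\ge 1$. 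For (2), the reduction to (1) with $t=c=a+b-p^h$ and $s=a$ is exactly right, and the vanishing $\binom{c}{a}=0$ for $0\le c<a$ is immediate from the integer formula for the binomial coefficient.

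One small point worth making explicit: the lemma as stated is false if $p$ is not prime (e.g.\ $\binom{4}{2}=6\not\equiv 0\bmod 4$), so the standing hypothesis ``$p=\operatorname{char}\field>0$'' must indeed be read as ``$p$ is a prime,'' as you note. This is consistent with the paper's usage throughout (hyperalgebras, Frobenius kernels, etc.), but it is not literally forced by the sentence ``let $\field$ be a commutative ring with unity, assume $p=\operatorname{char}\field>0$.'' Your parenthetical flag is appropriate.
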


Let $\Unkhz$ be the $\field$-subalgebra of $\Unkh$ generated by $\big({H_i
\atop k}\big)$ for $1\leq i\leq n$ and $0\leq k<p^h$.
For $h\geq 1$ let
$$\afmbnnh=\{\la\in\afmbnn\mid 0\leq\la_i<p^{h},\,\forall i\}.$$

\begin{Lem}\label{basis B0}
The set $\fM_h^0=\{\big({H
\atop \la}\big)
\mid \la\in\afmbnnh\}$ forms
a $\field$-basis for $\Unkhz$.
\end{Lem}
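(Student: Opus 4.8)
The plan is to prove linear independence and spanning of $\fM_h^0$ separately, with essentially all of the work concentrated in the spanning half. For linear independence I would specialize $A=B=0$ in Theorem~\ref{basis for afuglz}, which exhibits $\{\big({H\atop\la}\big)\mid\la\in\afmbnn\}$ as part of a $\mbz$-basis of the free $\mbz$-module $\afuglz$; hence its image in $\Unk=\afuglz\ot_\mbz\field$ is $\field$-linearly independent, and so is the subset $\fM_h^0$, which is indexed by $\afmbnnh\subseteq\afmbnn$. It remains to show that the $\field$-span of $\fM_h^0$, call it $M$, equals $\Unkhz$.

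One inclusion is immediate: since $\big({H\atop\la}\big)=\prod_{1\le i\le n}\big({H_i\atop\la_i}\big)$ and, for $\la\in\afmbnnh$, every factor $\big({H_i\atop\la_i}\big)$ has $0\le\la_i<p^h$ and so is one of the algebra generators of $\Unkhz$, we have $M\subseteq\Unkhz$. For the reverse inclusion, observe that $\big({H_i\atop k}\big)=\big({H\atop k\afbse_i}\big)\in\fM_h^0$ whenever $0\le k<p^h$, so all generators of $\Unkhz$ lie in $M$; thus it is enough to prove that $M$ is closed under multiplication, for then $M$ is an $\field$-subalgebra of $\Unk$ that contains every generator of $\Unkhz$ and is contained in $\Unkhz$, forcing $M=\Unkhz$. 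So the whole content is the multiplicative closure of $M$.

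Because the $H_i$ pairwise commute, $\big({H\atop\la}\big)\big({H\atop\mu}\big)=\prod_{1\le i\le n}\big({H_i\atop\la_i}\big)\big({H_i\atop\mu_i}\big)$, and closure reduces to a one-variable statement: for each $i$ and all $0\le a,b<p^h$, the product $\big({H_i\atop a}\big)\big({H_i\atop b}\big)$ must lie in the $\field$-span of $\{\big({H_i\atop c}\big)\mid 0\le c<p^h\}$. Here I would invoke the Vandermonde-type identity of one-variable polynomials
\[
\big({x\atop a}\big)\big({x\atop b}\big)=\sum_{k=0}^{\min\{a,b\}}\big({a+b-k\atop a}\big)\big({a\atop k}\big)\big({x\atop a+b-k}\big),
\]
a standard binomial identity over $\mbz$ (it records, for integer $x=N$, the count of pairs of subsets of $[N]$ of sizes $a,b$ sorted by the size $k$ of their intersection). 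Specializing $x$ to $H_i$ yields the same identity in $\afuglz$, hence in $\Unk$ after reduction. Now split the sum according to whether $a+b-k<p^h$ or $a+b-k\ge p^h$: in the latter case write $a+b-k=a+(b-k)$ with $0\le a<p^h$ and $0\le b-k\le b<p^h$, so Lemma~\ref{identity}(2) gives $\big({a+b-k\atop a}\big)=0$ in $\field$ and that term drops out. Every surviving term has $0\le a+b-k<p^h$, so each $\big({H_i\atop a+b-k}\big)$ is one of the admissible elements. This gives the single-variable claim, hence the closure of $M$, and the lemma.

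I expect the only real difficulty to be bookkeeping: writing down the correct product-of-binomials identity and checking that its terms with $a+b-k\ge p^h$ are exactly those annihilated by Lemma~\ref{identity}(2). Once that identity is in hand the argument is formal, and in particular no basis of $\Unkh$ (which is only established later) is needed — this lemma is purely a computation inside the commutative "torus part" together with the mod-$p^h$ binomial vanishing already recorded in Lemma~\ref{identity}.
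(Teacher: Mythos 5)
Your proof is correct, but it takes a genuinely different route from the paper's on the spanning step. The paper reduces spanning to showing $\bigl({H_i\atop t'}\bigr)\bigl({H_i\atop t}\bigr)\in V$ for $0\le t,t'<p^h$ by invoking Lusztig's \emph{recursive} identity \cite[2.3(g8)]{Lu90},
\begin{equation*}
\lebr{H_i\atop t'}\ribr\lebr{H_i\atop t}\ribr=\lebr{t+t'\atop t}\ribr\lebr{H_i\atop t+t'}\ribr-\sum_{0<j\leq t'}(-1)^j\lebr{t+j-1\atop j}\ribr\lebr{H_i\atop t'-j}\ribr\lebr{H_i\atop t}\ribr,
\end{equation*}
then killing the leading term with Lemma \ref{identity}(2) when $t+t'\ge p^h$ and running an induction on $t'$ to handle the correction terms $\bigl({H_i\atop t'-j}\bigr)\bigl({H_i\atop t}\bigr)$. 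You instead use the closed-form Vandermonde-type identity $\bigl({x\atop a}\bigr)\bigl({x\atop b}\bigr)=\sum_{k=0}^{\min\{a,b\}}\bigl({a+b-k\atop a}\bigr)\bigl({a\atop k}\bigr)\bigl({x\atop a+b-k}\bigr)$, which expresses the product directly in the desired span and needs no induction; the bad terms with $a+b-k\ge p^h$ vanish by exactly the same appeal to Lemma \ref{identity}(2), applied with $(a,b-k)$ in place of $(a,b)$ (and $0\le b-k\le b<p^h$ holds since $k\ge 0$). Your use of Lemma \ref{identity}(2) there is correct. Both routes therefore rest on the same mod-$p^h$ vanishing; yours trades the reference to Lusztig's recursion and the induction for a single symmetric binomial identity, and you also spell out the linear-independence half (specializing $A=B=0$ in Theorem \ref{basis for afuglz} and tensoring), which the paper cites in one line. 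Either is acceptable; yours is arguably a bit more self-contained.
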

\begin{proof}
Let $V=\spann_\field\fM_h^0$.
From \cite[2.3(g8)]{Lu90},
we see that
$$\lebr{H_i\atop t'}\ribr
\lebr{H_i\atop t}\ribr=\lebr{t+t'\atop t}\ribr\lebr{H_i\atop t+t'}\ribr-\sum_{0<j\leq t'}(-1)^j\lebr{t+j-1\atop j}\ribr
\lebr{H_i\atop t'-j}\ribr
\lebr{H_i\atop t}\ribr
$$
for $0\leq t,t'<p^{h}$.
By \ref{identity} we have
$\big({t+t'\atop t}\big)\big({H_i\atop t+t'}\big)=0$
for $0\leq t,t'<p^{h}$ with $t+t'\geq p^{h}$. Consequently, by induction on $t'$ we see that
$\bblr{H_i\atop t'}\bbrr
\bblr{H_i\atop t}\bbrr\in V$ for $0\leq t,t'<p^{h}$. It follows that
$\Unkhz=V$.  Furthermore,  by \ref{basis for afuglz} the set $\fM_h^0$ is linearly independent. The lemma is proved.
\end{proof}

For $h\geq 1$ let $$\afThnpmh=\{A\in\afThnpm\mid 0\leq a_{s,t}<p^{h},\,\forall s\not=t\}.$$
Let $$V_h=\spann_\field\{A\{\la\}\mid A\in\afThnpmh,\,\la\in\afmbnnh\}\han\Unk.$$
\begin{Lem}\label{property of Vh}
Let $0\leq k<p^h$, $1\leq i\leq n$, $j\in\mbz$ and $i\not=j$.
Assume $A\in\afThnpmh$ and $\la,\mu\in\afmbnnh$. Then we have
$(k\afE_{i,j})\{\bfl\}A\{\la\}\in V_h$ and $\big({H\atop\mu}\big)A\{\la\}\in V_h$.
\end{Lem}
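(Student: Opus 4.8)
The plan is to reduce the statement to the multiplication formula in \ref{fundamental formula} and the commutation formula in \ref{0mu Ala}, together with the mod‑$p$ identities in \ref{identity}, and then verify that all the integers that appear as coefficients and all the matrix/weight entries that appear as indices stay within the ranges $[0,p^h)$, so that the outputs lie in $V_h$ by definition.

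\textbf{Step 1: the $\binom{H}{\mu}$‑factor.} For the second claim, apply \ref{0mu Ala} with the roles $A\in\afThnpmh$, $\la\in\afmbnnh$, $\mu\in\afmbnnh$. That gives $\binom{H}{\mu}A\{\la\}$ as a $\mbz$‑linear combination of terms $A\{\la+\dt\}$ with $\dt\leq\mu$. The matrix part is still $A\in\afThnpmh$, so the only issue is that $\la+\dt$ need not lie in $\afmbnnh$. Here I would invoke \eqref{0la A0} (or rather the fact that $A\{\nu\}$ for arbitrary $\nu\in\afmbnn$ can be rewritten, via the triangular relation in \eqref{0la A0}, as a $\mbz$‑linear combination of $\binom{H}{\bt}A\{\bfl\}$'s, hence via Lemma \ref{basis B0} reduced modulo $p^h$): more efficiently, I would argue that $A\{\nu\}$ for any $\nu$ can be written as $\bblr{H\atop\nu}\bbrr A\{\bfl\}$ plus lower terms $A\{\dt\}$, $\dt<\nu$, with coefficients $\binom{\ro(A)}{\nu-\dt}$; then I reduce $\bblr{H\atop\nu}\bbrr$ using Lemma \ref{basis B0} and the recursion from its proof to land in $\Unkhz\cdot A\{\bfl\}$, and reduce the scalars $\binom{\ro(A)}{\nu-\dt}$ modulo $p$. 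Iterating on $\sum_i\nu_i$ (which strictly decreases) terminates, and the resulting terms are exactly of the form $\binom{H}{\bt}A\{\bfl\}=A\{\bt\}$ plus lower, with $\bt\in\afmbnnh$. So $\binom{H}{\mu}A\{\la\}\in V_h$.

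\textbf{Step 2: the $(k\afE_{i,j})\{\bfl\}$‑factor.} For the first claim, apply \ref{fundamental formula} with $\la=\bfl$, giving
\[
(k\afE_{i,j})\{\bfl\}A\{\bfl\}=\sum_{\al\in\La(\infty,k)}\prod_{t\neq i,\,t\in\mbz}\binom{a_{i,t}+\al_t-\dt_{\bar j,\bar i}\al_{t+j-i}}{\al_t}A^{(\al)}\{\al_i\afbse_i\},
\]
as in \eqref{la=0}. Since $0\leq k<p^h$ and $\al\in\La(\infty,k)$, each $\al_t\leq k<p^h$, so the new off‑diagonal entries $a_{i,t}+\al_t-\dt_{\bar j,\bar i}\al_{t+j-i}$ of $A^{(\al)}$ could however exceed $p^h-1$. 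The key point is \ref{identity}(2): whenever an off‑diagonal entry $a^{(\al)}_{s,t}$ of $A^{(\al)}$ satisfies $a^{(\al)}_{s,t}\geq p^h$, I must show the corresponding coefficient vanishes in $\field$. For $s=i$: the factor $\binom{a_{i,t}+\al_t-\dt_{\bar j,\bar i}\al_{t+j-i}}{\al_t}$ — write $a^{(\al)}_{i,t}=a_{i,t}+\al_t-\dt_{\bar j,\bar i}\al_{t+j-i}$; if $a^{(\al)}_{i,t}\geq p^h$ while $a_{i,t}<p^h$ and $\al_t<p^h$, then since $a^{(\al)}_{i,t}-\al_t=a_{i,t}-\dt_{\bar j,\bar i}\al_{t+j-i}\leq a_{i,t}<p^h$ we get $\binom{a^{(\al)}_{i,t}}{\al_t}=\binom{(a^{(\al)}_{i,t}-\al_t)+\al_t}{\al_t}=0$ by \ref{identity}(2). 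The case $s=j$ is handled symmetrically: the entry $a^{(\al)}_{j,t}=a_{j,t}-\al_t$ only \emph{decreases}, so it never exceeds $p^h-1$ (it stays $\geq 0$ by the summation constraint, and $\leq a_{j,t}<p^h$). Rows $s\neq i,j$ are unchanged. Hence every surviving term has $A^{(\al)}\in\afThnpmh$. Finally, the weights appearing are $\al_i\afbse_i$ with $\al_i\leq k<p^h$, so $\al_i\afbse_i\in\afmbnnh$; thus $(k\afE_{i,j})\{\bfl\}A\{\bfl\}\in V_h$.

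\textbf{Step 3: combining.} For general $\la\in\afmbnnh$ I first use \eqref{0la A0} to write $A\{\la\}=\binom{H}{\la}A\{\bfl\}+\sum_{\dt<\la}\binom{\ro(A)}{\la-\dt}A\{\dt\}$, so it suffices to treat $(k\afE_{i,j})\{\bfl\}\binom{H}{\la}A\{\bfl\}$ and $(k\afE_{i,j})\{\bfl\}A\{\dt\}$ for $\dt<\la$; by induction on $\sum_i\la_i$ the latter is fine, and for the former I would commute $(k\afE_{i,j})\{\bfl\}$ past $\binom{H}{\la}$ using \ref{generalized commute formula} (with $m=1$), picking up only terms in $\mpX^\la (k\afE_{i,j})\{\bfl\}$, i.e. $\binom{H}{\bt}(k\afE_{i,j})\{\bfl\}$ with $\bt<\la$; reducing these $\binom{H}{\bt}$ via Lemma \ref{basis B0}, applying Step 2 to $(k\afE_{i,j})\{\bfl\}A\{\bfl\}$, and then Step 1 to multiply the resulting $V_h$‑elements on the left by $\binom{H}{\bt}$, closes the argument. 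The main obstacle is genuinely Step 2 — controlling which off‑diagonal entries of $A^{(\al)}$ can grow past $p^h$ and checking that the binomial coefficient in front of exactly those terms is killed by \ref{identity}(2); once that bookkeeping is pinned down, everything else is routine induction on $\sg(A)$ and on $\sum_i\la_i$ using the already‑established formulas.
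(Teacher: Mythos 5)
Your Step 2 (the case $\la=\bfl$ for the first claim) is sound and uses essentially the paper's mod-$p$ binomial argument, but Step 1 has a genuine gap that sinks the whole proposal. After applying \ref{0mu Ala} you propose to deal with the out-of-range terms $A\{\la+\dt\}$ (with $\la+\dt\notin\afmbnnh$) by rewriting $A\{\nu\}$ via \eqref{0la A0} and then ``reducing $\binom{H}{\nu}$ using Lemma \ref{basis B0} to land in $\Unkhz\cdot A\{\bfl\}$.'' This is not possible: for $\nu\notin\afmbnnh$ the element $\binom{H}{\nu}$ does \emph{not} lie in $\Unkhz$, and indeed $A\{\nu\}\notin V_h$ in general. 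A concrete counterexample: take $A=0$ and $\nu=p^h\afbse_1$, so $A\{\nu\}=\binom{H_1}{p^h}$ by \eqref{prod(Hi lai)}; this is linearly independent (by \ref{basis for afuglz}) of all $\binom{H}{\la}$ with $\la\in\afmbnnh$, hence $\binom{H_1}{p^h}\notin V_h$. So you cannot prove the out-of-range summands lie in $V_h$; the point is rather that they never occur.

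The observation you are missing, and which makes the paper's proof a short direct check, is that the \emph{coefficient} of each out-of-range term already vanishes in $\field$. For the second claim, if $\la+\dt\notin\afmbnnh$ then some $\la_t+\dt_t\geq p^h$ with $\la_t<p^h$ and $\dt_t\leq\mu_t<p^h$, so by \ref{identity}(2) the factor $\binom{\dt+\la}{\la}$ appearing in \ref{0mu Ala} is $0$. For the first claim, the paper applies \ref{fundamental formula} with the general $\la$ directly (not only $\la=\bfl$) and checks two vanishings: if $A^{(\al)}\notin\afThnpmh$ then the product factor $\binom{a_{i,t}+\al_t-\dt_{\bar j,\bar i}\al_{t+j-i}}{\al_t}$ dies by \ref{identity}(1) (since $0\leq a_{i,t}+\al_t-\dt_{\bar j,\bar i}\al_{t+j-i}-p^h<\al_t$), and if $\al_i\afbse_i+\dt\notin\afmbnnh$ then $\binom{\dt_i+\al_i}{\al_i}$ dies by \ref{identity}(2). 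This eliminates the need for your Step 3 commutation bootstrap entirely. Your Step 2 row-$i$ bookkeeping via \ref{identity}(2) is a correct alternative to the paper's use of \ref{identity}(1), so that piece can be retained, but Steps 1 and 3 should be replaced by the direct coefficient-vanishing argument above.
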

\begin{proof}
We use \ref{fundamental formula} to prove
$(k\afE_{i,j})\{\bfl\}A\{\la\}\in V_h$.
If  $A^{(\al)}\in\afThnpm\backslash
\afThnpmh$ for some $\al\in\La(\infty,k)$ then $a_{i,t}+\al_t-\dt_{\bar i,\bar j}\al_{t+j-i}\geq p^h$ for some $t\not=i$.
Since $A\in\afThnpmh$ we have $0\leq a_{i,t}+\al_t-\dt_{\bar i,\bar j}\al_{t+j-i}-p^h\leq \al_t+a_{i,t}-p^h<\al_t$. Furthermore we have
$0\leq\al_t\leq k<p^h$. It follows from
\ref{identity} that
$$\lebr{a_{i,t}+\al_t-\dt_{\bar i,\bar j}\al_{t+j-i}\atop\al_t}\ribr
=\lebr{a_{i,t}+\al_t-\dt_{\bar i,\bar j}\al_{t+j-i}-p^h\atop\al_t}\ribr=0$$
in $\field$.
In addition, if $\al_i\afbse_i+\dt\not\in\afmbnnh$ for some $\al\in\La(\infty,k)$ and $\dt\in\afmbnn$ with $\dt\leq\la$, then $\al_i+\dt_i\geq p^h$. Since $k<p^h$ and $\la\in\afmbnnh$, we have $\al_i\leq k<p^h$ and $\dt_i\leq\la_i<p^h$. This together with \ref{identity} implies that
$\big({\al_i+\dt_i\atop\al_i}\big)=0$ in $\field$. Consequently,
$(k\afE_{i,j})\{\bfl\}A\{\la\}\in V_h$.

If $\la+\dt\not\in\afmbnnh$ for some $\dt\in\afmbnn$ with $\dt\leq\mu$ then $\la_t+\dt_t\geq p^h$ for some $t$. Since $\la,\mu\in\afmbnnh$ we have $\la_t<p^h$ and $\dt_t\leq\mu_t<p^h$. It follows from \ref{identity} that
$\big({\la_t+\dt_t\atop\la_t}\big)=0$ in $\field$. Hence, by
\ref{0mu Ala} we conclude that $\big({H\atop\mu}\big)A\{\la\}\in V_h$.
\end{proof}

For $h\geq 1$ let $\Unkhp$ (resp. $\Unkhm$) be the $\field$-subalgebra of $\Unkh$ generated by the elements $u^+_{k\afE_{i,j},1}$ (resp. $u^-_{k\afE_{i,j},1}$) for $0\leq k< p^h$, $1\leq i\leq n$, $j\in\mbz$ with $i<j$. Let $\afThnph=\{A\in\afThnp\mid 0\leq a_{s,t}<p^{h},\,\forall s\not=t\}$ and
$\afThnmh=\{A\in\afThnm\mid 0\leq a_{s,t}<p^{h},\,\forall s\not=t\}$.

\begin{Lem}\label{basis of Unkhp}
Each of the following set forms a $\field$-basis for $\Unkhp:$
\begin{itemize}
\item[(1)]
$\fM_h^+:=\{\Ea\mid A\in\afThnph\};$
\item[(2)]
$\fC_h^+:=\{u^+_{A,1}\mid A\in\afThnph\};$
\item[(3)]
$\fG_h^+:=\{\msEa\mid A\in\afThnph\}$.
\end{itemize}
A similar result holds for $\Unkhm$.
\end{Lem}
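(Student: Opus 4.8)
The plan is to prove the three assertions simultaneously by the same strategy: first establish that the three candidate sets span $\Unkhp$, and then deduce linear independence from the corresponding statement for $\afuglzp$ (namely \ref{tri for the postive part of affine gln} and \ref{relation between Galand integral form and Ringel Hall algebras}(1)), using the triangularity already recorded there together with \ref{A1A2 cdots At}. Since $\Unkhp$ is generated as an $\field$-algebra by the elements $u^+_{k\afE_{i,j},1}$ for $0\le k<p^h$, $1\le i\le n$, $j\in\mbz$, $i<j$, the spanning statement for $\fC_h^+$ amounts to showing that the $\field$-submodule $W_h:=\spann_\field\fC_h^+$ is closed under left multiplication by each such generator. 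This is exactly the positive-part version of the multiplication formula in \ref{formula in Hall algebra}: we have
\[
u^+_{k\afE_{i,j},1}u^+_{A,1}=\sum_{\al\in\La(\infty,k),\,\al_t=0\ \forall t\le i}
\prod_{t\ne i,\,t\in\mbz}\binom{a_{i,t}+\al_t-\dt_{\bar j,\bar i}\al_{t+j-i}}{\al_t}
u^+_{A^{(\al)},1}.
\]
The point is the same argument as in \ref{property of Vh}: if $A\in\afThnph$ but $A^{(\al)}\notin\afThnph$, then $a_{i,t}+\al_t-\dt_{\bar j,\bar i}\al_{t+j-i}\ge p^h$ for some $t\ne i$, while $0\le a_{i,t}+\al_t-\dt_{\bar j,\bar i}\al_{t+j-i}-p^h<\al_t\le k<p^h$, so by \ref{identity}(1) the coefficient $\binom{a_{i,t}+\al_t-\dt_{\bar j,\bar i}\al_{t+j-i}}{\al_t}$ vanishes in $\field$. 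Hence $W_h$ is a subalgebra containing all generators, so $\Unkhp\subseteq W_h$, and conversely each $u^+_{A,1}$ with $A\in\afThnph$ lies in $\Unkhp$ (it is obtained, up to the triangular correction in \ref{tri for the postive part of affine gln}, as a product of the $u^+_{a_{i,j}\afE_{i,j},1}$, which we may resolve by induction on $\sg(A)$). This gives $\Unkhp=W_h=\spann_\field\fC_h^+$, i.e. (2).

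For (1) and (3) I would then transfer the triangularity of \ref{tri for the postive part of affine gln} and \ref{relation between Galand integral form and Ringel Hall algebras}(1) to $\Unkhp$. Both say that, over $\mbz$, $\Ea=u^+_{A,1}+\sum_{\sg(B)<\sg(A)}f_{B,A}u^+_{B,1}$ and $\msEa=u^+_{A,1}+\sum_{\sg(B)<\sg(A)}g_{B,A}u^+_{B,1}$ with $f_{B,A},g_{B,A}\in\mbz$; these identities hold in $\Unkhp$ after reduction mod $p$ provided the matrices $B$ that occur stay in $\afThnph$. But every $B$ appearing has $\sg(B)<\sg(A)$ and arises from multiplying together the blocks $k\afE_{i,j}$ with $k=a_{i,j}<p^h$ via \ref{formula in Hall algebra}; the same vanishing argument as above shows each entry of each such $B$ is $<p^h$, so indeed $B\in\afThnph$ and the expansions make sense and hold in $\Unkhp$. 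Thus $\fM_h^+$ and $\fG_h^+$ are obtained from the basis $\fC_h^+$ by a unitriangular (with respect to the partial order refining $\sg$) $\field$-linear change of coordinates, hence are themselves $\field$-bases of $\Unkhp$. Finally, linear independence of $\fC_h^+$ over $\field$ follows because $\{u^+_{A,1}\mid A\in\afThnp\}$ is a $\mbz$-basis of $\afuglzp$ (\ref{tri for the postive part of affine gln}), so $\{u^+_{A,1}\ot 1\mid A\in\afThnp\}$ is an $\field$-basis of $\afuglzp\ot_\mbz\field$, which surjects onto $\Unkhp$; one then checks the composite $\spann_\field\fC_h^+\hookrightarrow\afuglzp\ot\field$ is split, e.g. via the $\field$-linear projection killing all $u^+_{B,1}\ot 1$ with $B\notin\afThnph$.

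The main obstacle I anticipate is making the last linear-independence step fully rigorous: one must be sure that no $\field$-linear relation among the $u^+_{A,1}$ with $A\in\afThnph$ is created in the quotient $\Unkhp$ of $\afuglzp\ot\field$. The cleanest route is probably to exhibit $\Unkhp$ abstractly inside $\afuglk=\afuglz\ot\field$ and observe that the coordinates of an element of $\afuglk$ with respect to the $\field$-basis $\{u^+_{A,1}\ot 1\}$ are well-defined; since $\Unkhp\subseteq\afuglk$ and we have shown $\Unkhp=\spann_\field\{u^+_{A,1}\mid A\in\afThnph\}$ as a submodule of $\afuglk$, linear independence is inherited from that of the ambient $\field$-basis, and no splitting argument is even needed. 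The analogous statements for $\Unkhm$ follow verbatim by applying the anti-automorphism exchanging $u^+$ and $u^-$ (equivalently, by the "similar result" halves of \ref{formula in Hall algebra}, \ref{tri for the postive part of affine gln} and \ref{relation between Galand integral form and Ringel Hall algebras}), replacing $\afThnph$ by $\afThnmh$ and transposing.
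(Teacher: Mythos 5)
Your proof is correct and follows essentially the same route as the paper: show $\spann_\field\fC_h^+$ is closed under left multiplication by the generators using \ref{formula in Hall algebra} together with the binomial-vanishing argument from \ref{identity}, then invoke the triangular expansions of $\Ea$ and $\msEa$ coming from \ref{tri for the postive part of affine gln}, \ref{key for integral form} and \ref{relation between Galand integral form and Ringel Hall algebras} (noting that the lower-order terms stay in $\afThnph$) to transfer the basis property to $\fM_h^+$ and $\fG_h^+$, with linear independence inherited from the $\field$-basis $\{u^+_{A,1}\mid A\in\afThnp\}$ of $\afuglzp\ot\field\subseteq\Unk$. Your interim description of $\afuglzp\ot\field$ surjecting onto $\Unkhp$ is not right ($\Unkhp$ is a subalgebra, not a quotient), but you correctly retract this and give the proper inclusion argument in the final paragraph, so no gap remains.
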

\begin{proof}
Let $V_h^+=\spann_\field\fC_h^+$.
Combining \ref{formula in Hall algebra} and \ref{property of Vh}, we conclude that $\Unkhp\han\Unkhp V_h^+\han V_h^+.$
This together with \ref{tri for the postive part of affine gln} implies that for $A\in\afThnph$ we have
$$\Ea=u_{A,1}^++f$$
where $f$ is a $\field$-linear combination of $u_{B,1}^+$ with $B\in\afThnph$ such that $\sg(B)<\sg(A)$. It follows that $V_h^+=\spann_\field\fM_h^+=\Unkhp$.
This together with \ref{key for integral form} implies that $\msE^{(A)}\in V_h^+$ for $A\in\afThnph$. It follows from \ref{relation between Galand integral form and Ringel Hall algebras} that  for $A\in\afThnph$ we have
$$\msEa=u_{A,1}^++g,$$
where $g$ stands for a $\field$-linear combination of $u_{B,1}^+$ with $B\in\afThnph$ such that $\sg(B)<\sg(A)$. So $V_h^+=\spann_\field\fG_h^+$.
Now the assertion follows from \ref{basis2 for afuglz} and \ref{identification}.
\end{proof}

\begin{Prop}\label{basis of Unkh}
Each of the following set forms a $\field$-basis for $\Unkh:$
\begin{itemize}
\item[(1)]
$\fM_h:=\{\Eap \big({H\atop\la}\big)\Faf\mid A\in\afThnpmh,\,\la\in\afmbnnh\};$
\item[(2)]
$\fC_h:=\{u^+_{A^+,1}\big({H\atop\la}\big)
u^-_{{}^t\!(A^-),1}\mid A\in\afThnpmh,\,\la\in\afmbnnh\};$
\item[(3)]
$\fG_h:=\{\msEap\big({H\atop\la}\big)\msFaf\mid A\in\afThnpmh,\,\la\in\afmbnnh\};$
\item[(4)]
$\fB_h:=\{A\{\la\}\mid A\in\afThnpmh,\,\la\in\afmbnnh\}.$
\end{itemize}
\end{Prop}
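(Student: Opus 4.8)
The plan is to identify $\Unkh$ with the $\field$-span of $\fB_h$ and then deduce the other three statements by a triangular change of basis; note that all four sets are indexed by the same set $\afThnpmh\times\afmbnnh$.

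\emph{Step 1: $\Unkh=V_h$, where $V_h:=\spann_\field\{A\{\la\}\mid A\in\afThnpmh,\,\la\in\afmbnnh\}$.} For the inclusion $\Unkh\subseteq V_h$, recall that $\Unkh$ is generated by the elements $(k\afE_{i,j})\{\bfl\}$ ($i\neq j$, $0\le k<p^h$) — these are the $u^\pm_{k\afE_{i,j},1}$ — together with the $\binom{H_i}{k}=0\{k\afbse_i\}$, $0\le k<p^h$. Since $1=0\{\bfl\}\in V_h$ and, by \ref{property of Vh}, every one of these generators carries $V_h$ into $V_h$ under left multiplication (using $k\afbse_i\in\afmbnnh$ for $k<p^h$), an induction on the length of a word in the generators gives $\Unkh\subseteq V_h$. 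For the reverse inclusion, observe that for $A\in\afThnpmh$, $\la\in\afmbnnh$ each factor of $\Eap=\prod_{(i,j)\in\sL^+}u^+_{a_{i,j}\afE_{i,j},1}$ is a generator of $\Unkhp$ (as $0\le a_{i,j}<p^h$), and similarly $\Faf\in\Unkhm$ and $\binom{H}{\la}=\prod_i\binom{H_i}{\la_i}\in\Unkhz$; hence $\Eap\binom{H}{\la}\Faf\in\Unkh\subseteq V_h$. Feeding this into the triangular relation \ref{tri for affine gln} and using that $\fB=\{A\{\la\}\}$ is a $\field$-basis of $\Unk$ (it is a $\mbz$-basis of $\sU_\mbz(\afgl)$ by \ref{basis2 for afuglz}(2), so $\fB\otimes1$ is one of $\Unk=\sU_\mbz(\afgl)\otimes\field$), we see that every correction term $B\{\dt\}$ occurring there is indexed by $\afThnpmh\times\afmbnnh$ (the terms $A\{\dt\}$, $\dt<\la$, automatically are). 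An induction on $\sg(A)$, with an inner induction on $\sum_{1\le i\le n}\la_i$, via $A\{\la\}=\Eap\binom{H}{\la}\Faf-\sum_{\dt<\la}g_{A,\la}^\dt A\{\dt\}-\sum_{\sg(B)<\sg(A)}g_{A,\la}^{B,\dt}B\{\dt\}$ and the base case $0\{\la\}=\binom{H}{\la}\in\Unkhz$, then gives $A\{\la\}\in\Unkh$ for all $A\in\afThnpmh$, $\la\in\afmbnnh$, so $V_h\subseteq\Unkh$ and $\Unkh=V_h$.

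\emph{Step 2: linear independence, and part (4).} Part (4) is now immediate: $\fB_h$ spans $\Unkh=V_h$ by construction, and it is $\field$-linearly independent since it is a subset of the $\field$-basis $\fB\otimes1$ of $\Unk$. Likewise $\fM_h\subseteq\fM$, $\fC_h\subseteq\{u^+_{A,1}\binom{H}{\la}u^-_{B,1}\}$ and $\fG_h\subseteq\fG$, and these ambient sets are $\mbz$-bases of $\sU_\mbz(\afgl)=\afuglz$ by \ref{basis2 for afuglz}(1), \ref{basis for afuglz} and \ref{identification} (here one uses $\afThnph\subseteq\afThnp$ and $\afThnmh\subseteq\afThnm$), so $\fM_h$, $\fC_h$, $\fG_h$ are all $\field$-linearly independent. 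They also lie in $\Unkh$: $\fM_h\subseteq\Unkh$ was shown in Step 1, while $\msEap\in\Unkhp$ (by \ref{basis of Unkhp}(3)) and $u^+_{A^+,1}\in\Unkhp$ for $A^+\in\afThnph$ (by \ref{basis of Unkhp}(2)) give $\fG_h,\fC_h\subseteq\Unkh$.

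\emph{Step 3: parts (1)--(3), and the main difficulty.} It remains to show that $\fM_h$, $\fC_h$, $\fG_h$ each span $\Unkh$. Equip $\afThnpmh\times\afmbnnh$ with the well-founded partial order $(B,\dt)\prec(A,\la)$ iff $\sg(B)<\sg(A)$, or $B=A$ and $\dt<\la$. The relation \ref{tri for affine gln} reads $\Eap\binom{H}{\la}\Faf=A\{\la\}+\sum_{(B,\dt)\prec(A,\la)}(\ast)B\{\dt\}$ with all correction terms indexed by $\afThnpmh\times\afmbnnh$ (by Step 1), so a well-founded induction, exactly as in Step 1, writes every $A\{\la\}$ as a $\field$-combination of elements of $\fM_h$; hence $\spann_\field\fM_h\supseteq\spann_\field\fB_h=\Unkh$ and $\fM_h$ is a basis. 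For $\fC_h$ one combines the triangular relations $\Eap=u^+_{A^+,1}+(\sg\text{-lower})$ and $\Faf=u^-_{{}^t\!(A^-),1}+(\sg\text{-lower})$ of \ref{tri for the postive part of affine gln} (and its $\afuglzm$-analogue) with the commutation formula \ref{generalized commute formula} to obtain $u^+_{A^+,1}\binom{H}{\la}u^-_{{}^t\!(A^-),1}=A\{\la\}+\sum_{(B,\dt)\prec(A,\la)}(\ast)B\{\dt\}$, the correction terms again being confined to $\afThnpmh\times\afmbnnh$ because the left side lies in $\Unkh=V_h$; the same induction gives part (2). Part (3) is identical, with \ref{relation between Galand integral form and Ringel Hall algebras} ($\msEap=\Eap+(\sg\text{-lower})$, $\msFaf=\Faf+(\sg\text{-lower})$) replacing \ref{tri for the postive part of affine gln}. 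The crux is Step 1: the inclusion $\Unkh\subseteq V_h$ is where the characteristic $p$ enters, through \ref{property of Vh} (hence through the binomial identities \ref{identity}), and the reverse inclusion — a formal triangularity argument — goes through only because $\Unkh\subseteq V_h$ is already known, which is precisely what pins the correction terms in \ref{tri for affine gln} to the bounded ranges $\afThnpmh$ and $\afmbnnh$.
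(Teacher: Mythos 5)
Your proposal is correct and the core of it matches the paper's proof: both show $\Unkh=V_h$ by first getting $\Unkh\subseteq V_h$ from \ref{property of Vh} (together with \eqref{th+uA} to identify the generators with $(k\afE_{i,j})\{\bfl\}$), and then getting $V_h\subseteq\Unkh$ by feeding $\Eap\binom{H}{\la}\Faf\in\Unkh\subseteq V_h$ into the triangular relation \ref{tri for affine gln} and using that $\fB$ is a $\field$-basis of $\Unk$ to force the correction terms $B\{\dt\}$ into $\afThnpmh\times\afmbnnh$; this ``confinement'' observation is exactly the crux in both arguments. Where you diverge is in handling parts (1)--(3): the paper invokes the one-sided Lemma \ref{basis of Unkhp} once to conclude immediately that $\spann_\field\fM_h=\spann_\field\fC_h=\spann_\field\fG_h$ (because each of $\{\Eap\}$, $\{u^+_{A^+,1}\}$, $\{\msEap\}$ with $A^+\in\afThnph$ spans the same $\Unkhp$, and similarly on the negative side), so that a single spanning argument for $\fM_h$ suffices; you instead re-derive a separate unitriangular relation for each of $\fC_h$ and $\fG_h$, combining \ref{tri for the postive part of affine gln} and \ref{relation between Galand integral form and Ringel Hall algebras} with \ref{generalized commute formula} and re-running the induction. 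Both routes are valid, but the paper's shortcut via \ref{basis of Unkhp} is cleaner and avoids having to assemble the commutation steps again; on the other hand your version makes the unitriangularity of the transition matrices between all four sets fully explicit, which is perhaps clearer to a reader who does not want to unwind \ref{basis of Unkhp}.
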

\begin{proof}
By \ref{basis for afuglz}, \ref{basis2 for afuglz} and \ref{identification}, we know that
each of the sets (1)-(4) is linear independent. Furthermore,
by \ref{basis of Unkhp} we have
$\spann_\field\fM_h=\spann_\field\fC_h=\spann_\field\fG_h$. Thus
it is enough to prove that
$\Unkh=V_h=\spann_\field\fM_h$, where
$V_h=\spann_\field\fB_h$.
Combining \eqref{th+uA} with \ref{property of Vh}
implies that
\begin{equation}\label{Unkhp Vh}
\Unkh\han\Unkh V_h\han V_h.
\end{equation}
Furthermore, by \ref{tri for affine gln}, for $A\in\afThnpmh$ and $\la\in\afmbnnh$ we have
$$\Eap\bigg({H\atop\la}\bigg)\Faf=A\{\la\}+f+g,$$
where $f$ is a $\field$-linear combination of
$A\{\dt\}$ with $\dt\in\afmbnn$ such that $\dt<\la$, and $g$ is a $\field$-linear combination of
$B\{\dt\}$ with $B\in\afThnpm$ and $\dt\in\afmbnn$ such that $\sg(B)<\sg(A)$.
From \eqref{Unkhp Vh} we see that $f$ must be a linear combination of
$A\{\dt\}$ with $\dt\in\afmbnnh$ such that $\dt<\la$, and $g$ must be a $\field$-linear combination of
$B\{\dt\}$ with $B\in\afThnpmh$ and $\dt\in\afmbnnh$ such that $\sg(B)<\sg(A)$.
It follows that $V_h=\spann_\field\fM_h\han\Unkh$. The proposition is proved.
\end{proof}
\begin{Rem}
From \ref{basis of Unkh}(3) we see that $\Unkh$
is generated by the elements
$(\afE_{i,j})^{(k)}$, $\Psi_{i,l}(\La_k)$ and $\big({H_i
\atop k}\big)$ for $0\leq k<p^h$, $1\leq i\leq n$, $j\in\mbz$ with $\bar i\not=\bar j$, $l\not=0$, where $\Psi_{i,l}$ is given in \eqref{Psiil}.
\end{Rem}

Combining \ref{basis B0}, \ref{basis of Unkhp} and \ref{basis of Unkh} yields the following result.

\begin{Coro}
We have $\Unkh=\Unkhp\Unkhz\Unkhm\cong\Unkhp\ot\Unkhz\ot\Unkhm$.
\end{Coro}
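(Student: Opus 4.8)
The plan is to establish the decomposition $\Unkh=\Unkhp\Unkhz\Unkhm$ first, and then upgrade it to a tensor-product decomposition by a dimension/basis count over $\field$. For the first part, I would combine the three triangularity statements already in hand: \ref{basis of Unkhp} shows that $\Unkhp$ (resp. $\Unkhm$) has $\field$-basis $\fC_h^+=\{u^+_{A^+,1}\mid A^+\in\afThnph\}$ (resp. $\fC_h^-=\{u^-_{{}^t\!(A^-),1}\mid A^-\in\afThnmh\}$), and \ref{basis B0} shows that $\Unkhz$ has $\field$-basis $\fM_h^0=\{\big({H\atop\la}\big)\mid\la\in\afmbnnh\}$. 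So the product space $\Unkhp\Unkhz\Unkhm$ is spanned by the elements $u^+_{A^+,1}\big({H\atop\la}\big)u^-_{{}^t\!(A^-),1}$ with $A=A^++A^-\in\afThnpmh$ and $\la\in\afmbnnh$, i.e.\ by the set $\fC_h$ of \ref{basis of Unkh}(2). Since $\fC_h$ is a $\field$-basis of $\Unkh$ by \ref{basis of Unkh}(2), we get $\Unkh\subseteq\Unkhp\Unkhz\Unkhm$. The reverse inclusion is clear because $\Unkhp,\Unkhz,\Unkhm$ are by definition subalgebras of $\Unkh$. This gives $\Unkh=\Unkhp\Unkhz\Unkhm$.

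Next I would prove the isomorphism $\Unkh\cong\Unkhp\ot\Unkhz\ot\Unkhm$ (tensor over $\field$). The natural multiplication map $m:\Unkhp\ot_\field\Unkhz\ot_\field\Unkhm\ra\Unkh$ is $\field$-linear and surjective by the previous paragraph. To see it is injective, I would show it sends a $\field$-basis of the source to the $\field$-basis $\fC_h$ of the target: the source has $\field$-basis $\{u^+_{A^+,1}\ot\big({H\atop\la}\big)\ot u^-_{{}^t\!(A^-),1}\}$ indexed by $(A^+,\la,A^-)\in\afThnph\times\afmbnnh\times\afThnmh$ (using the bases from \ref{basis of Unkhp} and \ref{basis B0}), and $m$ carries this to $u^+_{A^+,1}\big({H\atop\la}\big)u^-_{{}^t\!(A^-),1}$, which as $A=A^++A^-$ ranges over $\afThnpmh$ and $\la$ over $\afmbnnh$ is exactly the basis $\fC_h$ of \ref{basis of Unkh}(2) (using that $\afThnpmh$ is in bijection with $\afThnph\times\afThnmh$ via $A\mapsto(A^+,A^-)$). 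Since $m$ takes a $\field$-basis bijectively to a $\field$-basis, it is an isomorphism of $\field$-modules.

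The one point requiring a little care — and the main (mild) obstacle — is making sure the product set $\fC_h$ really does coincide with the multiplication image of the chosen bases of the three factors, with no collapsing or overlap. This hinges on the bijection $A\leftrightarrow(A^+,A^-)$ between $\afThnpmh$ and $\afThnph\times\afThnmh$, which is immediate from the definitions since a matrix in $\afThnpm$ is uniquely the sum of its strictly-upper part and its strictly-lower part, and the entrywise bound $0\leq a_{s,t}<p^h$ for $s\neq t$ splits accordingly; and on \ref{basis of Unkh}(2) having already been proved with precisely the indexing by $\afThnpmh\times\afmbnnh$. Everything else is formal. One could alternatively phrase the injectivity of $m$ via a rank count when $\field$ is a field, but the basis-to-basis argument above works over an arbitrary commutative ring $\field$ with $p=\operatorname{char}\field>0$, so I would use that.
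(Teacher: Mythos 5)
Your proposal is correct and follows the same route as the paper: the paper proves the corollary simply by "combining" Lemma \ref{basis B0}, Lemma \ref{basis of Unkhp}, and Proposition \ref{basis of Unkh}, which is exactly the basis-to-basis multiplication-map argument you spell out (using the basis $\fC_h$ of $\Unkh$ from \ref{basis of Unkh}(2) and the factorwise bases from \ref{basis of Unkhp} and \ref{basis B0}, together with the obvious bijection $\afThnpmh\leftrightarrow\afThnph\times\afThnmh$).
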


Let
$\afSrk=\afSrmbz\ot \field.$ Again,
by abuse of notation, we shall denote the images of $[A]_1,A\{\la,r\}$, etc. in $\afSrk$ by the same letters.
In \ref{realization}, the algebra $\afuglq$ over $\mbq$ is embedded into $\prod_{r\geq 0}\afSrmbq$. We will prove in \ref{injective} that the hyperalgebra
$\afuglk$ and $\Unkh$ can be realized as subalgebras of $\prod_{r\geq 0}\afSrk$.

Recall the map $\xi_r:\afuglq\ra\afSrmbq$ defined in \eqref{xir}.
It is proved in \cite[6.7]{Fu13} that $\xi_r(\afuglz)=\afSrmbz$.
Thus, the restriction of $\xi_r$ to $\afuglz$ gives a surjective $\mbz$-algebra homomorphism $\xi_r:\afuglz\tra\afSrmbz.$
By tensoring  with $\field$,
we get a surjective algebra homomorphism
\begin{equation}\label{xrk}
\xrk:=\xi_r\ot id:\Unk\tra\afSrk.
\end{equation}
The maps $\xi_{r,\field}$ induce an algebra homomorphism
\begin{equation}\label{zeta}
\xi_\field=\prod_{r\geq 0}\xi_{r,\field}:\Unk\ra\prod_{r\geq 0}\afSrk.
\end{equation}

\begin{Thm}\label{injective}
The algebra homomorphism $\xi_\field:\Unk\ra\prod_{r\geq 0}\afSrk$ is injective. Furthermore we have $\xi_\field(\Unk)=\spann_\field\{(A\{\la,r\})_{r\geq 0}\mid A\in\afThnpm,\,\la\in\afmbnn\}$ and
$\xi_\field(\Unkh)=\spann_\field\{(A\{\la,r\})_{r\geq 0}\mid A\in\afThnpmh,\,\la\in\afmbnnh\}$.
\end{Thm}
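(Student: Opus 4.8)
The plan is to deduce the theorem from the $\field$-bases of $\Unk$ and $\Unkh$ obtained above, together with a single leading-term computation in the affine Schur algebras $\mathcal S_\vtg(n,r)_\field$. First I would settle the two image assertions, which are formal once the bases are in place. By \ref{basis2 for afuglz}(2) the set $\{A\{\la\}\mid A\in\afThnpm,\,\la\in\afmbnn\}$ is a $\mbz$-basis of $\afuglz$, hence a $\field$-basis of $\Unk=\afuglz\ot_\mbz\field$; likewise $\{A\{\la\}\mid A\in\afThnpmh,\,\la\in\afmbnnh\}$ is a $\field$-basis of $\Unkh$ by \ref{basis of Unkh}(4). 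Under the identification of \ref{realization} one has $\xi_r(A\{\la\})=A\{\la,r\}$, and since $\xi_r(\afuglz)=\afSrmbz$ by \cite[6.7]{Fu13} this remains valid in $\afSrk$ after applying $-\ot_\mbz\field$; hence $\xi_\field(A\{\la\})=(A\{\la,r\})_{r\ge0}$. Feeding the two bases into $\xi_\field$ and using $\field$-linearity then yields the stated descriptions of $\xi_\field(\Unk)$ and $\xi_\field(\Unkh)$, so it only remains to prove that $\xi_\field$ is injective.

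The key point is the following computation. For $A\in\afThnpm$ and $\la\in\afmbnn$ set $r_{A,\la}=\sg(A)+\sum_{1\le i\le n}\la_i$. I claim that $A\{\la,r\}=0$ in $\afSrk$ whenever $r<r_{A,\la}$, while $A\{\la,r_{A,\la}\}=[A+\diag(\la)]_1$. Both follow at once from the definition $A\{\la,r\}=\sum_{\mu}\binom{\mu}{\la}[A+\diag(\mu)]_1$, the sum running over $\mu\in\afmbnn$ with $\sum_{1\le i\le n}\mu_i=r-\sg(A)$: the coefficient $\binom{\mu}{\la}=\prod_{1\le i\le n}\binom{\mu_i}{\la_i}$ already vanishes in $\mbz$ unless $\mu_i\ge\la_i$ for every $i$, and under this constraint together with $r\le r_{A,\la}$ the only possibility is $\mu=\la$ (which forces $r=r_{A,\la}$), for which $\binom{\la}{\la}=1$. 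Moreover $\sg(A+\diag(\la))=r_{A,\la}$, so $[A+\diag(\la)]_1$ is one of the standard $\field$-basis vectors of $\mathcal S_\vtg(n,r_{A,\la})_\field$ (recall that $\{[B]_1\mid B\in\afThnr\}$ is a $\field$-basis of $\afSrk$ by \cite[Th.~2.2.4]{Gr99}).

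Granting this, injectivity follows by a minimal-degree argument. Let $z\in\Unk$ be nonzero and write $z=\sum_{(A,\la)}c_{A,\la}A\{\la\}$ as a finite $\field$-linear combination in the basis above, with nonempty support $\Omega=\{(A,\la)\mid c_{A,\la}\ne0\}$. Put $r_*=\min\{r_{A,\la}\mid(A,\la)\in\Omega\}$. By the claim every summand with $r_{A,\la}>r_*$ has vanishing $r_*$-th component, so the $r_*$-th component of $\xi_\field(z)$ equals $\sum_{(A,\la)\in\Omega,\ r_{A,\la}=r_*}c_{A,\la}[A+\diag(\la)]_1$ in $\mathcal S_\vtg(n,r_*)_\field$. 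Since $(A,\la)\mapsto A+\diag(\la)$ is injective (the diagonal recovers $\la$ and the off-diagonal part recovers $A$) and the indexing set is nonempty, this is a nontrivial $\field$-combination of distinct standard basis vectors, hence nonzero. Therefore $\xi_\field(z)\ne0$, so $\xi_\field$ is injective on $\Unk$, and a fortiori on $\Unkh\subseteq\Unk$.

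Almost all of the substance here — the PBW-type $\field$-bases of $\Unk$ and $\Unkh$ from \ref{basis2 for afuglz} and \ref{basis of Unkh}, and the integral surjectivity $\xi_r(\afuglz)=\afSrmbz$ of \cite{Fu13} — is already in hand, so I do not expect a serious obstacle. The only care required is in the leading-term claim: one must verify that at $r=r_{A,\la}$ precisely one term of $A\{\la,r\}$ survives, with coefficient $1$, and that the minimal-degree device correctly accommodates the fact that each $\xi_\field(A\{\la\})$ has infinitely many nonzero components spread over the factors $\mathcal S_\vtg(n,r)_\field$.
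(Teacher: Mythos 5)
Your proof is correct. You reduce injectivity, exactly as the paper does, to showing that the family $\{(A\{\la,r\})_{r\ge0}\mid A\in\afThnpm,\ \la\in\afmbnn\}$ is $\field$-linearly independent, and the image statements are the same formal consequence of the PBW-type bases of $\Unk$ and $\Unkh$ in both arguments. Where you diverge is in the linear-independence step, and the difference is genuine. The paper fixes $r$ and expands each $A\{\la,r\}$ in the standard basis $\{[A+\diag(\mu)]_1\}$ of $\afSrk$; separating by the index $A$, it obtains, for every $A$ and every $\mu\in\afmbnn$, the scalar relation $\sum_{\la}f_{A,\la}\binom{\mu}{\la}=0$, and then kills the coefficients $f_{A,\la}$ one by one by choosing $\nu$ minimal in the support of $f_{A,\,\cdot}$ for the componentwise order $\le$ of \eqref{order on afmbzn} and evaluating at $\mu=\nu$. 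You instead observe that each $A\{\la\}$ has a well-defined leading degree $r_{A,\la}=\sg(A)+\sum_i\la_i$ — that $A\{\la,r\}=0$ for $r<r_{A,\la}$ and $A\{\la,r_{A,\la}\}=[A+\diag(\la)]_1$ — and then read off the $r_*$-th component for $r_*$ minimal over the support, using that $(A,\la)\mapsto A+\diag(\la)$ is injective. Both are triangularity arguments; yours pins down a single leading basis vector $[A+\diag(\la)]_1$ in a single Schur algebra, which is slightly more direct and, as a bonus, exhibits the ``lowest'' component of $\xi_\field(A\{\la\})$ explicitly. The paper's per-$A$ order argument, in turn, establishes the stronger elementary fact that the functions $\mu\mapsto\binom{\mu}{\la}$ on $\afmbnn$ are $\field$-linearly independent, which is reusable. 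Both are complete; no gap in your version.
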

\begin{proof}
Clearly, we have
\begin{equation}\label{xik(Ala)}
\xi_\field(A\{\la\})=(A\{\la,r\})_{r\geq 0}
\end{equation}
for $A\in\afThnpm$ and $\la\in\afmbnn$.
From \ref{basis2 for afuglz} we see that the set
$\{A\{\la\}\mid A\in\afThnpm,\,\la\in\afmbnn\}$ forms a $\field$-basis for $\Unk$.
Thus it is enough to prove that the set
$\{(A\{\la,r\})_{r\geq 0}\mid A\in\afThnpm,\,\la\in\afmbnn\}$ is $\field$-linear independent.

Suppose $$\sum_{A\in\afThnpm\atop\la\in\afmbnn}f_{A,\la}  (A\{\la,r\})_{r\geq 0}=0,$$
where $f_{A,\la}\in\field$. Then for any $r\geq 0$ we have
$$0=
\sum_{A\in\afThnpm\atop\la\in\afmbnn}f_{A,\la} A\br{\la,r}
=\sum_{A\in\afThnpm\atop\mu\in\afLa(n,r-\sg(A))}
\left(
\sum_{\la\in\afmbnn}f_{A,\la}
\bigg({\mu\atop\la}\bigg)\right)[A+\diag(\mu)]_1$$
in $\afSrk$.
This implies that $\sum_{\la\in\afmbnn}f_{A,\la}
\left({\mu\atop\la}\right)=0,$
for $A\in\afThnpm$, $\mu\in\afLa(n,r-\sg(A))$ with $r\geq\sg(A)$.
For $A\in\afThnpm$, we have $\afmbnn=\bin_{r\geq\sg(A)}\afLa(n,r-\sg(A))$. It follows that
$$\sum_{\la\in\afmbnn}f_{A,\la}
\left({\mu\atop\la}\right)=0$$ for $A\in\afThnpm$ and $\mu\in\afmbnn$.
For $A\in\afThnpm$ let $X_A=\{\la\in\afmbnn\mid f_{A,\la}\not=0\}$. If $X_A\not=\varnothing$ for some $A\in\afThnpm$, let $\nu$ be the minimal element in $X_A$ with respect to the order relation $\leq$ on $\afmbzn$ defined in \eqref{order on afmbzn}. Since $\nu$ is minimal, we have
$f_{A,\la}=0$ for $\la\in\afmbnn$ with $\nu>\la$. This implies that
$$f_{A,\nu}=f_{A,\nu}+\sum_{\la\in\afmbnn\atop\nu>\la}f_{A,\la}
\left({\nu\atop\la}\right)=\sum_{\la\in\afmbnn}f_{A,\la}
\left({\nu\atop\la}\right)=0.$$
This is a contradiction. Consequently, $X_{A}=\varnothing$ for any $A\in\afThnpm$. The theorem is proved.
\end{proof}

\section{A realization of $\Unkh$}
In this section we will first construct the algebra $\msKnhq$. The algebra
$\msKnhq$ is the affine analogue of the algebra $\ms K'$ constructed by
Beilinson--Lusztig--MacPherson in \cite[\S6]{BLM}. We will prove in \ref{vi} that
$\msKnhq$ is a realization of $\Unkh$.

The stabilization property of the multiplication of affine quantum Schur algebras was established in \cite[Prop.~6.3]{DF14}. Using this property,  a certain algebra $\afKn$ over $\sZ$ (without
1), with basis $\{[A]\mid A\in\aftiThn\}$ was constructed in
\cite[\S6]{DF14}.

Let $\afKnmbz=\afKn\ot\mbz$, where $\mbz$ is regarded as a $\sZ$-module by specializing $v$ to $1$. By \cite[Prop.~6.2]{Fu12} and \cite[Lem.~6.1]{DF14}, we know that $\afKnmbz$ is isomorphic to the algebra $\sK_\mbz(n)$ defined in \cite[\S6]{Fu12}.
For $A\in\aftiThn$ let
$$[A]_1=[A]\ot 1\in\afKnmbz.$$
Then the set $\{[A]_1\mid A\in\aftiThn\}$ forms a $\mbz$-basis for $\afKnmbz$.
Note that if $A,B\in\tiThn$ is such that $\co(B)\not=\ro(A)$ then $[B]_1\cdot[A]_1=0$ in $\afKnmbz$.
By \ref{formula}, \ref{tri1} and the definition of $\afKnmbz$, we have the following result.

\begin{Lem}\label{formula in Knmbz}
$(1)$ Let $k\geq 1$ and $i,j\in\mbz$ with $i\not=j$. Assume $A\in\aftiThn$ and $\la=\ro(A)$. Then the following formulas hold in $\afKnmbz:$
\[
\begin{split}
&\qquad[k\afE_{i,j}+\diag(\la-k\afbse_j)]_1\cdot[A]_1\\
&=\sum_{\dt\in\La(\infty,k)\atop a_{j,t}-\dt_t+\dt_{\bar i,\bar j}\dt_{t+i-j}\geq 0,\,\forall t\not=j}\prod_{t\in\mbz}\left({a_{i,t}+\dt_t-\dt_{\bar i,\bar j}\dt_{t+j-i}\atop\dt_t}\right)
\bigg[A+\sum_{t\in\mbz}\dt_t(\afE_{i,t}-\afE_{j,t})\bigg]_1.
\end{split}
\]

$(2)$ For $A\in\aftiThn$, we have the following triangular relation in $\afKnmbz:$
$$\prod_{(i,j)\in\sL}
[a_{i,j}\afE_{i,j}+\diag(\la^{(i,j)})]_1=[A]_1+\sum_{B\in\aftiThn\atop\sg(B^++B^-)
<\sg(A^++A^-)}h_{B,A}[B]_1\quad(h_{B,A}\in\mbz),
$$
where the products are taken with respect to any fixed total order on $\sL$ and $\la^{(i,j)}\in\afmbzn$ are uniquely determined by $A$ and the fixed total order on $\sL$. In particular,
the algebra $\afKnmbz$ is generated by the elements $[k\afE_{i,j}+\diag(\la)]_1$ with $k\in\mbn$, $1\leq i\leq n$, $j\in\mbz$, $i\not=j$ and $\la\in\afmbzn$.
\end{Lem}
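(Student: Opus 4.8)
The plan is to transfer the affine‑Schur‑algebra identities of Proposition~\ref{formula} and Corollary~\ref{tri1} to $\afKnmbz$ through the stabilisation property of \cite[Prop.~6.3]{DF14} on which the construction of $\afKnmbz$ rests. Recall that, by that construction, the multiplication of $\afKnmbz$ in the basis $\{[A]_1\mid A\in\aftiThn\}$ is the $r$‑stable multiplication of the algebras $\afSrmbz$: for $B,A\in\aftiThn$ with $\co(B)=\ro(A)$ and for $\bfp\in\afmbnn$ with all coordinates sufficiently large, one has $B+\diag(\bfp),A+\diag(\bfp)\in\afThnr$ with $r=\sg(A)+\sum_{1\le t\le n}p_t$ and $\co(B+\diag(\bfp))=\ro(A+\diag(\bfp))$, the product $[B+\diag(\bfp)]_1\cdot[A+\diag(\bfp)]_1$ in $\afSrmbz$ is a $\mbz$‑combination of the elements $[C+\diag(\bfp)]_1$ with $C\in\aftiThn$, and the coefficients so obtained (for $A$ of sufficiently large diagonal, and $\bfp$ large) are the structure constants $g^C_{B,A}$ of $\afKnmbz$. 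So it is enough to evaluate the left‑hand sides of $(1)$ and $(2)$ in $\afSrmbz$ after adding a large dominant diagonal to every matrix involved, and then read off the stabilised coefficients.

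For $(1)$, put $B=k\afE_{i,j}+\diag(\la-k\afbse_j)$; since $k\ge 1$ and $i\ne j$ we have $B\in\aftiThn$, and $\co(B)=\la=\ro(A)$, so $[B]_1\cdot[A]_1$ is computed by stabilisation. For $\bfp$ sufficiently dominant, $B+\diag(\bfp)=k\afE_{i,j}+\diag\big(\ro(A+\diag(\bfp))-k\afbse_j\big)$, so Proposition~\ref{formula} applies to $[B+\diag(\bfp)]_1\cdot[A+\diag(\bfp)]_1$ in $\afSrmbz$ with its ``$A$'' taken to be $A+\diag(\bfp)$ and its ``$\la$'' to be $\ro(A)+\bfp\ge k\afbse_j$. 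Each output matrix there is $(A+\diag(\bfp))+\sum_t\dt_t(\afE_{i,t}-\afE_{j,t})=\big(A+\sum_t\dt_t(\afE_{i,t}-\afE_{j,t})\big)+\diag(\bfp)$, hence corresponds to $C=A+\sum_t\dt_t(\afE_{i,t}-\afE_{j,t})\in\aftiThn$, and its attached binomial factor, read off in the stable range, is $\prod_t\binom{a_{i,t}+\dt_t-\dt_{\bar i,\bar j}\dt_{t+j-i}}{\dt_t}$ as in the statement. The one change in the limit is the index condition of Proposition~\ref{formula}: at the index $t=j$ it reads $a_{j,j}+p_j-\dt_j+\dt_{\bar i,\bar j}\dt_i\ge 0$, which is automatic once $p_j$ is large because $0\le\dt_j,\dt_i\le k$; hence the summation range stabilises to $\{\dt\in\La(\infty,k)\mid a_{j,t}-\dt_t+\dt_{\bar i,\bar j}\dt_{t+i-j}\ge 0\ \forall\,t\ne j\}$, and $(1)$ follows.

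For $(2)$, I would iterate $(1)$: multiplying the factors $[a_{i,j}\afE_{i,j}+\diag(\la^{(i,j)})]_1$ in the chosen order on $\sL$ — where the $\la^{(i,j)}$ are precisely the diagonals making each partial product composable with the next factor in the form demanded by $(1)$ — and applying $(1)$ at each step expresses the product as $[A]_1$ plus a $\mbz$‑combination of $[B]_1$ with $\sg(B^++B^-)\le\sg(A^++A^-)$, the term $[A]_1$ arising only from the choice $\dt_j=a_{i,j}$, $\dt_t=0$ otherwise, at every step, hence with coefficient $\prod_{(i,j)}\binom{a_{i,j}}{a_{i,j}}=1$. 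An induction on $\sg(A^++A^-)$, modelled on the derivation of Corollary~\ref{tri1} from Proposition~\ref{fundamental formula}, then upgrades this to the triangular relation as stated; the generation statement follows by downward induction on $\sg(A^++A^-)$, so that every $[A]_1$ lies in the subalgebra generated by the $[k\afE_{i,j}+\diag(\la)]_1$.

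The step I expect to require the most care is the diagonal bookkeeping behind $(1)$: one must check that the right‑hand side of Proposition~\ref{formula} depends only \emph{polynomially} on the diagonal of its right‑hand factor, so that its validity in $\afSrmbz$ for all sufficiently dominant shifts $A+\diag(\bfp)$ forces the asserted identity in $\afKnmbz$ for the original $A\in\aftiThn$ (whose diagonal may have negative entries), and in particular that the binomial coefficients and the relaxed index condition recorded above are genuinely the $r$‑stable data rather than artefacts of a large shift. For $(2)$, the corresponding care is in pinning down the $\la^{(i,j)}$ and isolating the leading term; with these in hand, the inductions are exactly those already used for Proposition~\ref{fundamental formula} and Corollary~\ref{tri1}.
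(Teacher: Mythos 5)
There is a genuine gap in your proof of part (1), and it lies exactly in the step you yourself flag as delicate. Your argument claims that for $\bfp$ sufficiently dominant, the coefficients in the $\afSrmbz$-expansion of $[B+\diag(\bfp)]_1\cdot[A+\diag(\bfp)]_1$ given by Proposition~\ref{formula} are the structure constants of $\afKnmbz$ with respect to the basis $\{[C]_1\}$. But these coefficients are not independent of $\bfp$. Whenever the summation index $\dt\in\La(\infty,k)$ has $\dt_i>0$ (which is admissible as soon as $a_{j,i}>0$, resp.\ $a_{j,i}+\dt_{i-mn}>0$ in the $\bar i=\bar j$ case), the factor at $t=i$ in Proposition~\ref{formula} evaluated on $A+\diag(\bfp)$ reads
\[
\binom{a_{i,i}+p_i+\dt_i-\dt_{\bar i,\bar j}\dt_j}{\dt_i},
\]
a polynomial of degree $\dt_i$ in $p_i$. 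A concrete instance: for $n=2$, $A=\afE_{2,1}+\diag(\nu-\afbse_1)$ and $B=\afE_{1,2}+\diag(\ro(A)-\afbse_2)$, the coefficient of $[\diag(\nu)+\diag(\bfp)]_1$ in $[B+\diag(\bfp)]_1[A+\diag(\bfp)]_1$ computed from Proposition~\ref{formula} is $\nu_1+p_1$, whereas the asserted structure constant of $\afKnmbz$ in the Lemma is $\nu_1$. So there is no ``$r$-stable datum'' to read off at the level of $\afSrmbz$, and the identity you want ``for the original $A$'' cannot be recovered by specializing $\bfp=0$ (which is anyway not a legal value when $A$ has negative diagonal entries).

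The underlying point is that the stabilization of \cite[Prop.~6.3]{DF14} is an identity over $\sZ=\mbz[\up,\up^{-1}]$: the $\bfp$-dependence of the quantum coefficient can be packaged into a power $\up^{-2p}$, one sets $\up^{-2p}=1$ to obtain the structure constants of $\afKn$ over $\sZ$, and only afterwards specializes $\up\to 1$ to get $\afKnmbz$. These two operations do not commute, which is precisely why the binomial $\binom{a_{i,i}+p_i+\dt_i-\cdots}{\dt_i}$ refuses to stabilize once $\up$ has already been set to $1$. The paper sidesteps this by invoking the identification $\afKnmbz\cong\sK_\mbz(n)$ from \cite{Fu12,DF14}, where the multiplication of $\sK_\mbz(n)$ on the basis $\{[A]_1\mid A\in\aftiThn\}$ is set up so that the coefficients are precisely the polynomial expressions of Proposition~\ref{formula} read in terms of the entries of $C=A+\sum_t\dt_t(\afE_{i,t}-\afE_{j,t})$ itself (with no diagonal shift); the content of Lemma~\ref{formula in Knmbz}(1) is then essentially that this agrees with the $\up\to 1$ specialization of the $\sZ$-stable product. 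To repair your argument you would have to either run the stabilization over $\sZ$ and only then specialize, or else appeal to the explicit construction of $\sK_\mbz(n)$ in \cite{Fu12}; the purely classical stabilization you describe does not suffice. Your sketch of part (2), being an iteration of part (1) modelled on Corollary~\ref{tri1}, inherits the same gap, but is otherwise in line with the intended derivation.
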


Let $\afKnk=\afKnmbz\ot\field$.
We shall denote the images of $[A]_1$ in $\afKnk$ by the same letters.
Let $\msKnh$ be the $\field$-submodule of $\afKnk$ spanned by the elements $[A]_1$ with $A\in\aftiThnh$, where
$\aftiThnh=\{A=(a_{i,j})\in\aftiThn\mid a_{i,j}<p^{h},\,\forall i\not=j\}$.
The following result is the affine analogue of \cite[6.2]{BLM}.

\begin{Prop}\label{the property of K}
$(1)$ $\msKnh$ is a subalgebra of $\afKnk$. It is generated by $[kE_{i,j}+\diag(\la)]_1$ with $0\leq k<p^{h}$, $1\leq i\leq n$, $j\in\mbz$, $i\not=j$ and $\la\in\afmbzn$.

$(2)$ Let $\la\in\afmbzn$. The map
$\tau_{\la}:\msKnh\ra\msKnh$ given by $[A]_1\ra[A+p^{h}\diag(\la)]_1$ is an algebra homomorphism.
\end{Prop}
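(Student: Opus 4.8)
The plan is to prove part~(1) by identifying $\msKnh$ with the subalgebra $\sA$ of $\afKnk$ generated by the elements $g=[k\afE_{i,j}+\diag(\mu)]_1$ with $0\le k<p^{h}$, $1\le i\le n$, $j\in\mbz$, $i\ne j$ and $\mu\in\afmbzn$ (these lie in $\msKnh$ since their only off-diagonal entries equal $k<p^{h}$), and then to deduce part~(2) by checking that $\tau_\la$ commutes with left multiplication by each such $g$.

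For $\sA\subseteq\msKnh$: since $\sA$ is spanned by products of generators and each generator lies in $\msKnh$, it is enough to prove $g\cdot[A]_1\in\msKnh$ for a generator $g$ and $A\in\aftiThnh$. The case $k=0$ (where $g$ is a diagonal idempotent) is trivial; for $k\ge1$ one may assume $\mu+k\afbse_j=\ro(A)$ and apply \ref{formula in Knmbz}(1). Each matrix occurring there is $A'=A+\sum_{t}\dt_t(\afE_{i,t}-\afE_{j,t})$ for some $\dt\in\La(\infty,k)$, with off-diagonal entries $\ge0$ by the summation conditions; any off-diagonal entry of $A'$ that exceeds the corresponding entry of $A$ lies in row $i$ (which equals row $j$ modulo $n$ when $\bar i=\bar j$) and has the form $N=a_{i,t}+\dt_t-\dt_{\bar i,\bar j}\dt_{t+j-i}$, where $N-\dt_t\le a_{i,t}<p^{h}$ and $0\le\dt_t\le k<p^{h}$. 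Since $\binom{N}{\dt_t}$ is a factor of the coefficient of $[A']_1$, and $N\ge p^{h}$ forces $0\le N-\dt_t<p^{h}$, \ref{identity}(2) shows that coefficient vanishes in $\field$; hence only $A'\in\aftiThnh$ survive and $g\cdot[A]_1\in\msKnh$. For the reverse inclusion I would induct on $\sg(A^{+}+A^{-})$ for $A\in\aftiThnh$ via the triangular relation \ref{formula in Knmbz}(2): when $\sg(A^{+}+A^{-})=0$, $[A]_1$ is a diagonal generator; otherwise every factor on the left of \ref{formula in Knmbz}(2) is a generator (here $a_{i,j}<p^{h}$ for $i\ne j$), so that side lies in $\sA\subseteq\msKnh$, and expanding it in the basis $\{[C]_1\}$ only $C\in\aftiThnh$ can occur, so comparing with $[A]_1+\sum_B h_{B,A}[B]_1$ forces each $B$ with $h_{B,A}\ne0$ into $\aftiThnh$ with $\sg(B^{+}+B^{-})<\sg(A^{+}+A^{-})$; induction then gives $[A]_1\in\sA$. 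This yields $\msKnh=\sA$, which is part~(1).

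For part~(2): $\tau_\la$ is a well-defined $\field$-linear automorphism of $\msKnh$ because $A\mapsto A+p^{h}\diag(\la)$ permutes $\aftiThnh$ with inverse $\tau_{-\la}$. As $\msKnh=\sA$ is generated by the $g$'s, it suffices to prove $\tau_\la(g\cdot[B]_1)=\tau_\la(g)\,\tau_\la([B]_1)$ for every generator $g=[k\afE_{i,j}+\diag(\mu)]_1$ and $B\in\aftiThnh$; iterating this identity gives multiplicativity on products of generators, and $\field$-bilinearity then extends it to all of $\msKnh$. Both sides vanish unless $\ro(B)=\mu+k\afbse_j$, which I assume (the case $k=0$ again being immediate). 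Applying \ref{formula in Knmbz}(1) to each side, and $\tau_\la$ to the left one, both become sums over the same set of $\dt\in\La(\infty,k)$ (the summation conditions involve only off-diagonal entries of $B$, which are unchanged by adding $p^{h}\diag(\la)$), with $\dt$-th term a scalar times $[B+p^{h}\diag(\la)+\sum_{t}\dt_t(\afE_{i,t}-\afE_{j,t})]_1$. The two scalars are products of binomial coefficients $\binom{\ast}{\dt_t}$ whose entries agree except in the one factor where $\ast$ is a diagonal entry of $B$ (the term $t\equiv i$), where they differ by a multiple of $p^{h}$; since $\dt_t\le k<p^{h}$ there, \ref{identity}(1)---iterated, and used in the form $\binom{s}{\dt_t}=\binom{s-p^{h}}{\dt_t}$ to cover negative $\la_i$---makes the two scalars equal in $\field$. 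Hence the two sides agree, proving part~(2).

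I expect the main obstacle to be the bookkeeping behind the two coefficient-vanishing arguments: verifying that an off-diagonal entry reaching $p^{h}$ always annihilates the attached binomial coefficient in $\field$. The subtle case is the imaginary-root direction $\bar i=\bar j$, where rows $i$ and $j$ are identified modulo $n$ and the increment collapses to $\sum_{t}(\dt_t-\dt_{t+j-i})\afE_{i,t}$, so one must be sure the coefficient in \ref{formula in Knmbz}(1) accounts for every off-diagonal entry that can grow.
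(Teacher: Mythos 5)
Your proof is correct and follows essentially the same approach as the paper: you use Lemma~\ref{formula in Knmbz}(1) together with Lemma~\ref{identity} to kill the coefficients of matrices outside $\aftiThnh$, the triangular relation \ref{formula in Knmbz}(2) with induction on $\sg(A^{+}+A^{-})$ for the reverse inclusion, and then the observation that \ref{formula in Knmbz}(1) is stable (via Lemma~\ref{identity}(1)) under shifting the diagonal by $p^{h}\la$ to get multiplicativity of $\tau_\la$. The only difference is cosmetic: you explicitly separate the two inclusions $\sA\subseteq\msKnh$ and $\msKnh\subseteq\sA$, and you spell out the coefficient-matching for part~(2) in more detail, but the key lemmas and the structure of the argument coincide with the paper's proof.
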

\begin{proof}
Let $A\in\aftiThnh$, $0\leq k<p^{h}$, $i\not=j$ and $\la=\ro(A)$. Let
$B=k\afE_{i,j}+\diag(\la-k\afbse_j)$.
Assume that
$A+\sum_{t\in\mbz}\dt_t(E_{i,t}-E_{j,t})\in\aftiThn\backslash\aftiThnh$ for some $\dt\in\La(\infty,k)$. Then $a_{i,t}+\dt_t-\dt_{\bar i,\bar j}\dt_{t+j-i}\geq
p^{h}$ for some $t\neq i$. Since $A\in\aftiThnh$, $\dt\in\La(\infty,k)$ and $k<p^h$,
we have $a_{i,t}+\dt_t-\dt_{\bar i,\bar j}\dt_{t+j-i}-p^h\leq
a_{i,t}+\dt_t-p^h<\dt_t$ and $\dt_t\leq k<p^h$. It follows from \ref{identity}
that
$$\bigg({a_{i,t}+\dt_t-\dt_{\bar i,\bar j}\dt_{t+j-i}\atop\dt_t}\bigg)
=\bigg({a_{i,t}+\dt_t-\dt_{\bar i,\bar j}\dt_{t+j-i}-p^h\atop\dt_t}\bigg)
=0$$
in $\field$.
Thus by \ref{formula in Knmbz}(1) we have
\begin{equation}\label{eq in the property of K}
[B]_1\cdot [A]_1\in\msKnh.
\end{equation}

We will show by induction on $\sg(A^++A^-)$ that $[A]\cdot[A']\in\msKnh$ for any $A'\in\aftiThnh$. The case $\sg(A^++A^-)=0$ is obvious. Assume now that
$\sg(A^++A^-)>0$ and that our statement is already known for  $C\in\aftiThnh$ with $\sg(C^++C^-)<\sg(A^++A^-)$.
By \ref{formula in Knmbz}(2) and \eqref{eq in the property of K} we have
$$\prod_{(i,j)\in\sL}
[a_{i,j}\afE_{i,j}+\diag(\la^{(i,j)})]_1=[A]_1+f.
$$
where $f$ is a $\field$-linear combination of $[C]_1$ with $C\in\aftiThnh$ and $\sg(C^++C^-)<\sg(A^++A^-)$. By the induction hypothesis we have $f[A']_1\in\msKnh$. Furthermore, by \eqref{eq in the property of K} we have $\prod_{(i,j)\in\sL}
[a_{i,j}\afE_{i,j}+\diag(\la^{(i,j)})]_1\cdot[A']_1\in\msKnh$. It follows that $[A]_1\cdot [A']_1\in\msKnh$. Thus (1) is proved.

From \ref{formula in Knmbz}(1) we see that $\tau_{\la}([A'']_1\cdot [A]_1)=\tau_{\la}([A'']_1)
\tau_{\la}([A]_1)$ for any $A''$ of the form $B$ as above. Since
$\msKnh$ is generated by elements like $[B]_1$ above, we conclude that
$\tau_\la$ is an algebra homomorphism.
\end{proof}

Let $\aftiThnhq$ be the set of all matrices
$A=(a_{i,j})_{i,j\in\mbz}$ with $a_{i,j}\in\mathbb N,\ a_{i,j}<p^{h}$ ($i\not= j$) and $a_{i,i}\in\mbz/p^{h}\mbz$ such that
\begin{itemize}
\item[(a)]$a_{i,j}=a_{i+n,j+n}$ for $i,j\in\mbz$; \item[(b)] for
every $i\in\mbz$, both sets $\{j\in\mbz\mid a_{i,j}\not=0\}$ and
$\{j\in\mbz\mid a_{j,i}\not=0\}$ are finite.
\end{itemize}
Let $$\afmbznh=\{(\ol{\la_i})_{i\in\mbz}\mid\ol{\la_i}\in\mbz/p^h\mbz,\,
\ol{\la_i}=\ol{\la_{i+n}},\,\forall i\}.$$
Let $\bar\ :\afmbzn\ra\afmbznh$  be the map defined by
$\ol{(\la_i)_{i}}=(\ol{\la_i})_{i}.$
There is a natural map $$pr:\aftiThnh\ra\aftiThnhq$$ sending $A+\diag(\la)$ to $A+\diag(\bar\la)$ for $A\in\afThnpmh$ and $\la\in\afmbzn$.
For $A\in\afThnpm$ and $\bar\la\in\afmbznh$, let
$\ro(A+\diag(\bar\la))=\bigl(\sum_{j\not=i}\ol{a_{i,j}}+\ol{\la_i}
\bigr)_{i}\in\afmbznh$ and
$\co(A+\diag(\bar\la))=\bigl(\sum_{i\not=j}\ol{a_{i,j}}+\la_j\bigr)_{j}\in\afmbznh$.

Let $\msKnhq$ be the free $\field$-module with basis $\{[A]_1\mid A\in\aftiThnhq\}$.
We shall define an
algebra structure on $\msKnhq$ as follows. Let $A,A'\in\aftiThnhq$. If
$\co(A)\not=\ro(A')$ in $\afmbznh$, then we define $[A]_1\cdot[A']_1$ to be zero.
Assume now that $\co(A)=\ro(A')$ in $\afmbznh$. Then there exists
$\ti A,\ti A'\in\aftiThnh$ such that $pr(\ti A)=A$, $pr(\ti A')=A'$ and $\co(\ti A)=\ro(\ti A')$ in $\afmbzn$. By
\ref{the property of K}(1), we may write $$[\ti A]_1\cdot[\ti
A']_1=\sum_{\ti A''\in \aftiThnh}\rho_{{\ti A''}}[\ti A'']_1$$ (product in
$\msKnh$), where $\rho_{{\ti A''}}\in
\field$. We define $[A]_1\cdot[A']_1$  to be $\sum_{\ti
A''\in \aftiThnh}\rho_{{\ti A''}}[pr(\ti A'')]_1$. Using \ref{the property of K}(2), one can easily prove that the product is well defined. It is easy to check that the product is associative. Consequently, $\msKnhq$ becomes an associative algebra over $\field$.

We are now ready to prove the main result of this paper.

\begin{Thm}\label{vi}
There is a $\field$-algebra isomorphism $\vih:\Unkh\ra\msKnhq$
satisfying $$
A\{\la\}\mapsto\sum_{\mu\in\afmbnnh}\bigg({\mu\atop\la}\bigg)[A+\diag(\bar\mu)]_1$$
for $A\in\afThnpmh$ and $\la\in\afmbnnh$.
\end{Thm}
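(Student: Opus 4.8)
The plan is to define $\vih$ on the $\field$-basis $\fB_h=\{A\{\la\}\mid A\in\afThnpmh,\ \la\in\afmbnnh\}$ of $\Unkh$ (\ref{basis of Unkh}(4)) by the displayed formula, extend it $\field$-linearly, and then verify separately that $\vih$ is a $\field$-linear isomorphism and that it is multiplicative. For bijectivity I would use that $\msKnhq$ has $\field$-basis $\{[C]_1\mid C\in\aftiThnhq\}$, that every $C\in\aftiThnhq$ is uniquely of the form $B+\diag(\bar\mu)$ with $B\in\afThnpmh$ and $\bar\mu\in\afmbznh$, and that the reduction map $\afmbnnh\to\afmbznh$ is a bijection; hence this basis is indexed by exactly the pairs indexing $\fB_h$. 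In these bases $\vih$ preserves the off-diagonal part $A$ and acts on the remaining weight index by the matrix $\bigl(\big({\mu\atop\la}\big)\bigr)_{\mu,\la\in\afmbnnh}$, which is unitriangular with respect to any total order refining $\le$ on $\afmbnnh$ (one factor in the numerator of $\big({\mu_i\atop\la_i}\big)$ vanishes whenever $0\le\mu_i<\la_i$, while $\big({\la\atop\la}\big)=1$); such a matrix is invertible over any ring, so $\vih$ is a $\field$-linear isomorphism. Since $1=0\{\bfl\}$ in $\Unkh$, we also get $\vih(1)=\sum_{\bar\nu\in\afmbznh}[\diag(\bar\nu)]_1$, which is the identity of $\msKnhq$.

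For multiplicativity I would use the standard reduction: the set of $x\in\Unkh$ with $\vih(xz)=\vih(x)\vih(z)$ for all $z$ is a $\field$-subalgebra of $\Unkh$ containing $1$, so it suffices to treat $x$ ranging over the algebra generators of $\Unkh$, namely the elements $(k\afE_{i,j})\{\bfl\}$ with $i\ne j$, $0\le k<p^h$ (by the definition of $\Unkh$ and \ref{divided power}) and $\big({H_i\atop k}\big)$ with $0\le k<p^h$, and, for each fixed such $x$, $z=A\{\la\}$ ranging over $\fB_h$ (both sides being $\field$-linear in $z$). The diagonal generators $x=\big({H_i\atop k}\big)$, or more generally $\big({H\atop\mu}\big)=0\{\mu\}$, are the easy case: $\big({H\atop\mu}\big)A\{\la\}$ is an explicit $\mbz$-combination of the $A\{\la+\dt\}$ by \ref{0mu Ala}, whereas $\vih(\big({H\atop\mu}\big))=\sum_{\bar\nu\in\afmbznh}\big({\bar\nu\atop\mu}\big)[\diag(\bar\nu)]_1$ (well defined in $\field$ by \ref{identity}(1)), and $[\diag(\bar\nu)]_1\cdot[A+\diag(\bar\mu')]_1$ equals $[A+\diag(\bar\mu')]_1$ or $0$ according as $\bar\nu$ is or is not the class of $\ro(A)+\mu'$ in $\afmbznh$; comparing the coefficient of $[A+\diag(\bar\mu')]_1$ then reduces to the $\mbz$-identity underlying \ref{0mu Ala} together with \eqref{0la A0}, read modulo $p^h$ via \ref{identity}.

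The off-diagonal generators $x=(k\afE_{i,j})\{\bfl\}$ are the main obstacle. On the $\Unkh$-side, $(k\afE_{i,j})\{\bfl\}A\{\la\}$ lies in $V_h=\spann_\field\fB_h$ by \ref{property of Vh} and is given explicitly by \ref{fundamental formula}, so $\vih$ of it is computable term by term. On the $\msKnhq$-side, $\vih((k\afE_{i,j})\{\bfl\})=\sum_{\bar\nu\in\afmbznh}[k\afE_{i,j}+\diag(\bar\nu)]_1$, and for each fixed $\mu$ its product with $[A+\diag(\bar\mu)]_1$ keeps only the single $\bar\nu$ with $\co(k\afE_{i,j}+\diag(\bar\nu))=\ro(A+\diag(\bar\mu))$; lifting this pair of factors to $\msKnh$ and expanding by \ref{formula in Knmbz}(1), then discarding those terms whose matrix leaves $\aftiThnh$ (the relevant binomial coefficients vanish in $\field$ by \ref{identity}, which is precisely the mechanism of \ref{property of Vh}), should reproduce $\vih$ applied to the right-hand side of \ref{fundamental formula}. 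The only laborious step is the bookkeeping that turns the product in $\afKnk$ into a combination of $A\{\,\cdot\,\}$-symbols; it goes through the same combinatorial identity (the expansion of $\big({\nu-\al_i\afbse_i+\al_j\afbse_j\atop\la}\big)\big({\nu_i\atop\al_i}\big)$) used in the proof of \ref{fundamental formula} to lift \ref{formula}, now carried out in $\afKnk$ instead of $\afSrmbz$, weighted by the coefficients $\big({\mu\atop\la}\big)$ and truncated modulo $p^h$. Once these two generator cases are checked, $\vih$ is an algebra homomorphism, and being bijective it is the required isomorphism.
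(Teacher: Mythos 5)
Your proposal follows the same route as the paper's own proof: define $\vih$ on the basis $\fB_h$ from \ref{basis of Unkh}(4), deduce bijectivity from the unitriangular shape of $\vih(A\{\la\})=[A+\diag(\bar\la)]_1+\sum_{\mu>\la}\big({\mu\atop\la}\big)[A+\diag(\bar\mu)]_1$, and verify multiplicativity only on the generators, using \ref{0mu Ala} for the diagonal generators and the pair \ref{fundamental formula} / \ref{formula in Knmbz}(1) (together with the vanishing of binomial coefficients from \ref{identity}) for the off-diagonal ones. The argument is correct and, apart from your slightly more explicit remarks about $\vih(1)$ and the subalgebra-of-multiplicativity reduction, it coincides with the paper's proof.
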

\begin{proof}
Since, by \ref{basis of Unkh}, the set $\{A\{\la\}\mid A\in\afThnpmh,\,\la\in\afmbnnh\}$
forms a $\field$-basis for $\Unkh$, we see that there is a $\field$-linear map $\vih:\Unkh\ra\msKnhq$ such that
$\vih(A\{\la\})=\sum_{\mu\in\afmbnnh}\big({\mu\atop\la}\big)
[A+\diag(\bar\mu)]_1$ for $A\in\afThnpmh$ and $\la\in\afmbnnh$.
For $\la,\mu\in\afmbnnh$ we have $[\diag(\bar\la)]_1[\diag(\bar\mu)]_1
=\dt_{\la,\mu}[\diag(\bar\la)]_1$.
This together with \ref{0mu Ala} implies that
$$
\vih\bigg(\bigg({H_i\atop k}\bigg)A\{\la\}\bigg)
=\vih\bigg(\bigg({H_i\atop k}\bigg)\bigg)\vih(A\{\la\})
$$
for $1\leq i\leq n$, $0\leq k<p^h$, $A\in\afThnpmh$ and $\la\in\afmbnnh$.
Using \eqref{th+uA}, \ref{fundamental formula} and \ref{formula in Knmbz}(1) one can show that
\begin{equation*}\label{vi(kEij0Ala)}
\vih(u_{k\afE_{i,j},1}^+ A\{\la\})=\vih(u_{k\afE_{i,j},1}^+)\vih(A\{\la\})\text{ and }
\vih(u_{k\afE_{i,j},1}^- A\{\la\})=\vih(u_{k\afE_{i,j},1}^-)\vih(A\{\la\})
\end{equation*}
for $A\in\afThnpmh$, $\la\in\afmbnnh$, $i<j$ and $0\leq k<p^h$.
Consequently, $\vih$ is an algebra homomorphism,
since $\Unkh$ is generated by $\big({H_i\atop k}\big)$,
$u_{k\afE_{i,j},1}^+$ and $u_{k\afE_{i,j},1}^-$
for $0\leq k<p^h$, $1\leq i\leq n$ and $i<j$. In addition, for $A\in\afThnpm$ and $\la\in\afmbnnh$ we have
$$\vih(A\{\la\})=[A+\diag(\bar\la)]_1+\sum_{\mu\in\afmbnnh,\,\la<\mu}
\bigg({\mu\atop\la}\bigg)[A+\diag(\bar\mu)]_1.$$
It follows that the set $\{\vih(A\{\la\})\mid A\in\afThnpmh,\,\la\in\afmbnnh\}$
forms a $\field$-basis for $\msKnhq$. The theorem is proved.
\end{proof}

Let $\afhKnk$ be the $\field$-module of
all formal $\field$-linear combinations
$\sum_{A\in\aftiThn}\beta_A[A]_1$ satisfying the following property:
for any ${\bf x}\in\afmbzn$,
the sets
$\{A\in\aftiThn\ |\ \beta_A\neq0,\ \ro(A)={\bf
x}\}$ and $\{A\in\aftiThn\ |\ \beta_A\neq0,\ \co(A)={\bf
x}\}$ are finite.
We can define the product of two elements
$\sum_{A\in\aftiThn}\beta_A[A]_1$,
$\sum_{B\in\aftiThn}\gamma_B[B]_1$ in $\afhKnk$  to be
$\sum_{A,B}\beta_A\gamma_B[A]_1\cdot[B]_1$ where $[A]_1\cdot[B]_1$ is the
product in $\afKnk$. This defines an associative algebra structure on
$\afhKnk$. Note that $\sum_{\la\in\afmbzn}[\diag(\la)]_1$ is the identity element of $\afhKnk$. We are ready to show that $\msKnhq$ and the hyperalgebra $\Unk$ can all be realized as subalgebras of $\afhKnk$.

\begin{Thm}\label{psi}
There is a injective algebra homomorphism
$\psih:\msKnhq\ra\afhKnk$ such that
$\psih([A+\diag(\bar\la)]_1)=[\![A+\diag(\bar\la)]\!]_1$
for $A\in\afThnpmh$ and $\bar\la\in\afmbznh$,
where $[\![A+\diag(\bar\la)]\!]_1=\sum_{\nu\in\afmbzn,\,\bar\nu
=\bar\la}[A+\diag(\nu)]_1\in\afhKnk.$
\end{Thm}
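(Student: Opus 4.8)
The plan is to define $\psih$ on the $\field$-basis $\{[A+\diag(\bar\la)]_1\mid A\in\afThnpmh,\,\bar\la\in\afmbznh\}$ of $\msKnhq$ by the prescribed formula, and then to check, in order, that the prescribed image lies in $\afhKnk$, that $\psih$ is injective, and that $\psih$ is multiplicative. For the first point, fix $A\in\afThnpmh$ and ${\bf x}\in\afmbzn$; the equation $\ro(A+\diag(\nu))={\bf x}$ forces $\nu_i=x_i-\sum_{j\not=i}a_{i,j}$ for each $i$, so among the summands of $[\![A+\diag(\bar\la)]\!]_1=\sum_{\bar\nu=\bar\la}[A+\diag(\nu)]_1$ at most one has row sum vector ${\bf x}$, and likewise at most one has a prescribed column sum vector. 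Hence $[\![A+\diag(\bar\la)]\!]_1\in\afhKnk$ and $\psih$ is a well-defined $\field$-linear map. Injectivity is then immediate: writing a basis element $[C]_1$ of $\afKnk$ as $C=B+\diag(\nu)$ with $B\in\afThnpm$ and $\nu\in\afmbzn$, it occurs in $[\![A+\diag(\bar\la)]\!]_1$ (for $A\in\afThnpmh$) precisely when $A=B$ and $\bar\la=\bar\nu$, and then with coefficient $1$; so each $[C]_1$ occurs in at most one of the $[\![A+\diag(\bar\la)]\!]_1$, which makes these elements $\field$-linearly independent and $\psih$ injective.

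The substance of the argument is multiplicativity, and the plan is to exploit the translation homomorphisms $\tau_\la$ of \ref{the property of K}(2). Let $A=B+\diag(\bar\la)$ and $A'=B'+\diag(\bar\la')$ with $B,B'\in\afThnpmh$. If $\co(A)\not=\ro(A')$ in $\afmbznh$, both sides vanish: in $\msKnhq$ by definition, and in $\afhKnk$ because every product $[B+\diag(\rho)]_1[B'+\diag(\rho')]_1$ occurring in $[\![A]\!]_1[\![A']\!]_1$ has $\co(B+\diag(\rho))\equiv\co(A)\not\equiv\ro(A')\equiv\ro(B'+\diag(\rho'))\pmod{p^h}$, so these column/row vectors differ in $\afmbzn$ and the product is $0$ in $\afKnk$. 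Assume now $\co(A)=\ro(A')$ in $\afmbznh$, and choose lifts $\ti A=B+\diag(\mu),\ti A'=B'+\diag(\mu')\in\aftiThnh$ with $\bar\mu=\bar\la$, $\bar{\mu'}=\bar{\la'}$ and $\co(\ti A)=\ro(\ti A')$ in $\afmbzn$, exactly as in the construction of the product on $\msKnhq$; write $[\ti A]_1[\ti A']_1=\sum_{\ti A''\in\aftiThnh}\rho_{\ti A''}[\ti A'']_1$ in $\msKnh$, so that by definition $[A]_1[A']_1=\sum_{\ti A''}\rho_{\ti A''}[pr(\ti A'')]_1$. Applying $\tau_\eta$ for $\eta\in\afmbzn$ to this identity in $\msKnh$ gives, inside $\afKnk$,
\[
[B+\diag(\mu+p^h\eta)]_1\,[B'+\diag(\mu'+p^h\eta)]_1=\sum_{\ti A''\in\aftiThnh}\rho_{\ti A''}\,[\ti A''+p^h\diag(\eta)]_1.
\]

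Summing over all $\eta\in\afmbzn$ is the crux. On the left, $\{\rho\in\afmbzn\mid\bar\rho=\bar\la\}=\{\mu+p^h\eta\mid\eta\in\afmbzn\}$ and similarly for $\bar\la'$, so the left side sums to $[\![A]\!]_1[\![A']\!]_1$ once one observes that in the double sum $\sum_{\eta,\eta'}[B+\diag(\mu+p^h\eta)]_1[B'+\diag(\mu'+p^h\eta')]_1$ only the $\eta=\eta'$ terms survive — matching columns of the first factor to rows of the second forces $p^h\eta=p^h\eta'$. On the right, interchanging the finite sum over $\ti A''$ with the sum over $\eta$ and using $pr(\ti A''+p^h\diag(\eta))=pr(\ti A'')$ together with the fact that $\eta\mapsto\ti A''+p^h\diag(\eta)$ exhausts the lifts of $pr(\ti A'')$, the right side sums to $\sum_{\ti A''}\rho_{\ti A''}[\![pr(\ti A'')]\!]_1=\psih\bigl([A]_1[A']_1\bigr)$. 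Hence $\psih([A]_1)\psih([A']_1)=\psih([A]_1[A']_1)$. I expect the main obstacle to be precisely this bookkeeping: making sure that the diagonal translation invariance supplied by $\tau_\eta$ is exactly the $pr$-collapse built into the product of $\msKnhq$, and that all rearrangements of infinite sums are legitimate in $\afhKnk$ (they are, since each $[C]_1$ occurs at most once in each $[\![\,\cdot\,]\!]_1$). Independence from the chosen lifts $\ti A,\ti A'$ is not a separate issue, being part of the already-established well-definedness of the product on $\msKnhq$.
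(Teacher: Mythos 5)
Your proof is correct and follows essentially the same route as the paper's: linear independence of the elements $[\![A+\diag(\bar\la)]\!]_1$ gives injectivity, and multiplicativity is established by lifting to $\msKnh$, applying the translation homomorphisms $\tau_\eta$ of \ref{the property of K}(2), and summing over $\eta\in\afmbzn$. You also add two small but legitimate verifications (membership in $\afhKnk$, and the vanishing case $\co(A)\not\equiv\ro(A')$) that the paper leaves implicit.
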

\begin{proof}
Since the set $\{[\![A+\diag(\bar\la)]\!]_1\mid A\in\afThnpmh,\,\bar\la\in\afmbznh\}$ is linear independent, the linear map $\psih$ is injective.
It remains to show that $\psih$ is an algebra homomorphism. Let $\bar\la,\
\bar\mu\in\afmbznh,\ A, B\in\afThnpmh$. Assume that $\ol{\co(B)}+\bar\mu
=\ol{\ro(A)}+\bar\la$. Then there exist $\al,\bt\in\afmbzn$ such that $\bar\al=\bar\la$, $\bar\bt=\bar\mu$ and
$\co(B)+\bt=\ro(A)+\al$.
By definition we have in $\afhKnk$
\begin{equation*}
\begin{split}
\psih([B+\diag(\bar\mu)]_1))\cdot\psih([A+\diag(\bar\la)]_1)
&=\sum_{{\mu',\la'\in\afmbzn,\ol{\mu'}=\bar\mu,\,\ol{\la'}=\bar\la}\atop
{\co(B)+\mu'=\ro(A)+\la'}}[B+\diag(\mu')]_1
\cdot[A+\diag(\la')]_1\\
&=\sum_{{\mu',\la'\in\afmbzn,\ol{\mu'}=\bar\bt,\,\ol{\la'}=\bar\al}\atop
{\la'-\mu'=\al-\bt}}[B+\diag(\mu')]_1
\cdot[A+\diag(\la')]_1.
\end{split}
\end{equation*}
If $\la',\mu'\in\afmbzn$ is such that $\ol{\la'}=\bar\al$,
$\ol{\mu'}=\bar\bt$ and $\la'-\mu'=\al-\bt$, then
$\la'=\al+p^h\dt$ and $\mu'=\bt+p^h\dt$ for some $\dt\in\afmbzn$.
It follows that
$\psih([B+\diag(\bar\mu)]_1))\cdot\psih([A+\diag(\bar\la)]_1)
=\sum_{\dt\in\afmbzn}[B+\diag(\bt+p^h\dt)]_1\cdot[A+\diag(\al+p^h\dt)]_1.$
By \ref{the property of K}(1), we may write
$[B+\diag(\bt)]_1\cdot[A+\diag(\al)]_1=
\sum_{C\in\afThnpmh,\,\ga\in\afmbzn}\rho_{C,\ga}[C+\diag(\ga)]_1$
in $\msKnh$, where $\rho_{C,\ga}\in\field$.
Consequently, by \ref{the property of K}(2) we have
\[
\begin{split}
\psih([B+\diag(\bar\mu)]_1))\cdot\psih([A+\diag(\bar\la)]_1)
&=\sum_{\dt,\ga\in\afmbzn,\,C\in\afThnpmh}\rho_{C,\ga}[C+\diag(\ga+p^h\dt)]_1\\
&=\sum_{\ga\in\afmbzn,\,C\in\afThnpmh}\rho_{C,\ga}[\![C+\diag(\bar\ga)]\!]_1\\
&=\psih([B+\diag(\bar\mu)]_1\cdot[A+\diag(\bar\la)]_1).
\end{split}
\]
The theorem is proved.
\end{proof}

\begin{Thm}
There is an injective algebra homomorphism $\zeta_\field:\Unk\ra\afhKnk$
satisfying $$A\{\la\}\mapsto\sum_{\mu\in\afmbzn}
\bigg({\mu\atop\la}\bigg)[A+\diag(\mu)]_1$$ for $A\in\afThnpm$ and $\la\in\afmbnn$. Furthermore we have the following commutative diagram$:$
$$\xymatrix{
\Unkh \ar[rr]^{\vih}\ar[d]_{\zeta_\field|_{\Unkh}}  &  & \msKnhq \ar [dll]^{\psih} \\
\afhKnk & &
}$$
where $\vih$ is defined in \ref{vi}
and $\psih$ is defined in \ref{psi}.
\end{Thm}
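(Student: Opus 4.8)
The plan is to define $\zeta_\field$ on the $\field$-basis $\{A\{\la\}\mid A\in\afThnpm,\,\la\in\afmbnn\}$ of $\Unk$ provided by \ref{basis2 for afuglz} via the stated formula, and then to identify its restriction to each $\Unkh$ with the composite $\psih\circ\vih$ of the maps from \ref{vi} and \ref{psi}. First I would check that $\zeta_\field$ lands in $\afhKnk$: since $\ro(A+\diag(\mu))=\ro(A)+\mu$ and $\co(A+\diag(\mu))=\co(A)+\mu$, for any prescribed row-- (resp.\ column--) sum at most one value of $\mu$ can contribute, so $\sum_{\mu\in\afmbzn}\bigg({\mu\atop\la}\bigg)[A+\diag(\mu)]_1$ obeys the row-- and column--finiteness conditions defining $\afhKnk$; as every element of $\Unk$ is a finite $\field$-combination of basis vectors, its image is a finite sum of such elements and hence lies in $\afhKnk$. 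Thus $\zeta_\field$ is a well-defined $\field$-linear map.

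Next I would compare $\zeta_\field$ with $\psih\circ\vih$ on the set $\{A\{\la\}\mid A\in\afThnpmh,\,\la\in\afmbnnh\}$, which is a $\field$-basis of $\Unkh$ by \ref{basis of Unkh}(4) and which consists of basis vectors of $\Unk$. Combining the formula for $\vih$ in \ref{vi} with that for $\psih$ in \ref{psi} gives
\[
\begin{split}
\psih(\vih(A\{\la\}))&=\sum_{\mu\in\afmbnnh}\bigg({\mu\atop\la}\bigg)[\![A+\diag(\bar\mu)]\!]_1\\
&=\sum_{\mu\in\afmbnnh}\ \sum_{\nu\in\afmbzn,\,\bar\nu=\bar\mu}\bigg({\mu\atop\la}\bigg)[A+\diag(\nu)]_1,
\end{split}
\]
whereas $\zeta_\field(A\{\la\})=\sum_{\nu\in\afmbzn}\bigg({\nu\atop\la}\bigg)[A+\diag(\nu)]_1$. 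These agree because, as $0\leq\la_i<p^h$ for all $i$, one has $\bigg({\nu\atop\la}\bigg)=\bigg({\mu\atop\la}\bigg)$ in $\field$ whenever $\bar\nu=\bar\mu$, by iterating \ref{identity}(1). Hence $\zeta_\field|_{\Unkh}=\psih\circ\vih$; since $\vih$ is an algebra isomorphism (\ref{vi}) and $\psih$ an injective algebra homomorphism (\ref{psi}), $\zeta_\field|_{\Unkh}$ is an injective algebra homomorphism, and the asserted commutative diagram is then immediate.

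Finally I would assemble the pieces. As $\Unk=\bigcup_{h\geq1}\Unkh$ with the $\Unkh$ nested (see \eqref{bin Unkh}), any $x,y\in\Unk$ lie in a common $\Unkh$, so $xy\in\Unkh$ and $\zeta_\field(xy)=\psih(\vih(xy))=\psih(\vih(x))\,\psih(\vih(y))=\zeta_\field(x)\zeta_\field(y)$; thus $\zeta_\field$ is an algebra homomorphism on all of $\Unk$. Likewise, if $\zeta_\field(x)=0$, pick $h$ with $x\in\Unkh$: then $\psih(\vih(x))=0$, and injectivity of $\psih$ together with bijectivity of $\vih$ forces $x=0$, so $\zeta_\field$ is injective. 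The only delicate point is the modular bookkeeping in the middle step --- matching the reduced diagonals $\bar\mu$ with the correct binomial coefficients throughout the (infinite) sum --- but \ref{identity}(1) disposes of it, so no genuine obstacle arises: once the defining formula for $\zeta_\field$ is recognised as the common value of the composites $\psih\circ\vih$, the statement follows formally from \ref{vi}, \ref{psi} and \eqref{bin Unkh}.
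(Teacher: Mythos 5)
Your proof is correct and follows essentially the same route as the paper: define $\zeta_\field$ linearly on the basis from \ref{basis2 for afuglz}, verify via \ref{identity}(1) that $\zeta_\field|_{\Unkh}=\psih\circ\vih$, and then deduce the algebra-homomorphism and injectivity properties from \ref{vi}, \ref{psi}, and the filtration \eqref{bin Unkh}. You fill in two small details the paper leaves implicit — that the defining sum lands in $\afhKnk$, and the binomial bookkeeping behind $\zeta_\field|_{\Unkh}=\psih\circ\vih$ — but the overall structure is identical.
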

\begin{proof}
From \ref{basis2 for afuglz} we see that there is a linear map $\zeta_\field:\Unk\ra\afhKnk$
such that $\zeta_\field(A\{\la\})=\sum_{\mu\in\afmbzn}
\big({\mu\atop\la}\big)[A+\diag(\mu)]_1$ for $A\in\afThnpm$ and $\la\in\afmbnn$. Clearly we have $\zeta_\field|_{\Unkh}=\psih\circ\vih$ for $h\geq 1$. This together with \ref{vi} and \ref{psi} implies that $\zeta_\field|_{\Unkh}$  is an injective algebra homomorphism for $h\geq 1$. Consequently, by \eqref{bin Unkh}, we conclude that $\zeta_\field$ is an injective algebra homomorphism.
\end{proof}

\section{The affine analogue of little and  infinitesimal Schur algebras}

Recall the map $\xrk:\Unk\tra\afSrk$ defined in \eqref{xrk}.
Let $\Unkhr=\xrk(\Unkh)$. The algebra $\Unkhr$ is the affine analogue of little Schur algebras introduced in \cite{DFW05,Fu07}.

Recall that $\afLanr=\{\la\in\afmbnn\mid\sum_{1\leq i\leq n}\la_i=r\}$. Let $\ol{\afLa(n,r)}_{p^{h}}=\{\bar\la\in\afmbznh\mid\la\in
\afLanr\}$.
For $A\in\afThnpmh$ and $\bar\la\in\afmbznh$
we define the element $[\![A+\diag(\bar\la),r]\!]_1\in\afSrk$ as follows:
\begin{equation*}\label{[[A,r]]}
[\![A+\diag(\bar\la),r]\!]_1=
\begin{cases}
\sum\limits_{\mu\in\afLa(n,r-\sg(A)) \atop
\bar\mu=\bar\la}[A+\diag(\mu)]_1 &\text{if $\sg(A)\leq r$ and $\bar\la\in\ol{\afLa(n,r-\sg(A))}_{p^{h}}$,}\\
0&\text{otherwise}.
\end{cases}
\end{equation*}
For $A\in\afThnpm$ let $\eap=\xrk(\Eap)$, $\faf=\xrk(\Faf)$
and $\bssg(A)=(\sg_i(A))_{i\in\mbz}\in\afmbnn$
where
$\sg_{i}(A)=\sum_{j<i}(a_{i,j}+a_{j,i}).$

By \ref{tri1} we have the following result.
\begin{Lem}\label{tri Schur}
For $A\in\afThnpm$ and $\la\in\afLanr$ we have the following triangular relation in $\afSrk:$
$$\eap [\diag(\la)]_1\faf=[A+\diag(\la-\bssg(A))]_1+f$$
where $f$ is a $\field$-linear combination of $[B+\diag(\mu)]_1$ with
$B\in\afThnpm$, $\mu\in\afLa(n,r-\sg(B))$ and $\sg(B)<\sg(A)$. In particular, the set $\{\eap[\diag(\la)]_1\faf\mid
A\in\afThnpm,\,\la\in\afLanr,\,\la\geq\bssg(A)\}$ forms a $\field$-basis for $\afSrk$.
\end{Lem}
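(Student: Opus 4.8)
The plan is to deduce Lemma \ref{tri Schur} directly from Corollary \ref{tri1} by applying the algebra homomorphism $\xrk:\Unk\tra\afSrk$ to the triangular relation there and then tracking what happens to each term after multiplying by $[\diag(\la)]_1$ on the appropriate side. Recall from \ref{tri1} that $\prod_{(i,j)\in\sL}(a_{i,j}\afE_{i,j})\{\bfl\}=A\{\bfl\}+\sum_{B,\dt}f_{B,\dt,A}B\{\dt\}$ with the sum running over $B\in\afThnpm$, $\dt\in\afmbnn$ with $\sg(B)<\sg(A)$. Splitting the product over $\sL=\sL^+\cup\sL^-$ we recognize the left side as $\Eap\Faf$ (this is exactly the content of \eqref{MAla}), so $\Eap\Faf=A\{\bfl\}+g$ with $g$ a $\mbz$-linear combination of $B\{\dt\}$, $\sg(B)<\sg(A)$. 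Multiplying on the left by $\big({H\atop\la}\big)$, or rather—since we want $[\diag(\la)]_1$ in the middle—it is cleaner to write $\Eap[\diag(\la)]_1\Faf$ after applying $\xrk$, using that $\xrk(\big({H\atop\la-\bssg(A)\cdots}\big))$-type elements hit $[\diag(\mu)]_1$'s in the Schur algebra; I would instead route through the image of $\Eap\big({H\atop\la'}\big)\Faf$ from \ref{tri for affine gln} for a suitable $\la'$, or—most directly—combine \eqref{th+uA} and the definition $\eap=\xrk(\Eap)$, $\faf=\xrk(\Faf)$ with \ref{formula1}/\ref{formula2} exactly as \ref{formula in Knmbz} and \ref{the property of K}(1) were derived.

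Concretely, the first step is to compute $\xrk(A\{\bfl\})$: by \eqref{xik(Ala)}-type reasoning and the definition of $A\br{\bfl,r}$, we have $\xrk(A\{\bfl\})=\sum_{\mu\in\afLa(n,r-\sg(A))}[A+\diag(\mu)]_1$, and more generally $\xrk(A\{\nu\})=\sum_{\mu}\big({\mu\atop\nu}\big)[A+\diag(\mu)]_1$. Second, I would show $\eap[\diag(\la)]_1\faf$ equals $\xrk$ applied to $\Eap\cdot(\text{something})\cdot\Faf$ that isolates the weight-$\la$ idempotent; the point is that in $\afSrk$ the element $[\diag(\la)]_1$ is an idempotent picking out the $\fS_\la$-component, so $\eap[\diag(\la)]_1\faf = \eap\faf'$ where $\faf'$ is $\faf$ composed after projecting, and by the triangularity of \ref{formula1}/\ref{formula2} the leading term is $[A+\diag(\la-\bssg(A))]_1$ — here $\bssg(A)$ records the row-sum shift $\sg_i(A)=\sum_{j<i}(a_{i,j}+a_{j,i})$ caused by multiplying in the $E$-part then the $F$-part, which is precisely how the diagonal gets displaced. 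The remaining terms, coming from the lower-order $\dt$-contributions in \ref{formula1}/\ref{formula2} and from the $g$ in \ref{tri1}, are $\field$-linear combinations of $[B+\diag(\mu)]_1$ with $\sg(B)<\sg(A)$, $\mu\in\afLa(n,r-\sg(B))$.

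For the final sentence — that $\{\eap[\diag(\la)]_1\faf\mid A\in\afThnpm,\,\la\in\afLanr,\,\la\geq\bssg(A)\}$ is a $\field$-basis of $\afSrk$ — I would invoke the standard triangularity-implies-basis argument: the set $\{[C]_1\mid C\in\afThnr\}$ is already a $\field$-basis of $\afSrk$ (obtained by base change from the $\mbz$-basis in \cite[Th.~2.2.4]{Gr99}), and via the bijection $C\leftrightarrow(B,\mu)$ with $B=C-\diag(\co\text{-part})\in\afThnpm$ and $\mu\in\afLa(n,r-\sg(B))$, the condition $\la\geq\bssg(A)$ together with $\la\in\afLanr$ exactly parametrizes the same index set as $(A,\mu')$ with $\mu'=\la-\bssg(A)\in\afLa(n,r-\sg(A))$. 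Since $\eap[\diag(\la)]_1\faf=[A+\diag(\la-\bssg(A))]_1+(\text{strictly lower }\sg)$, the transition matrix between $\{\eap[\diag(\la)]_1\faf\}$ and $\{[C]_1\}$ is unitriangular with respect to the partial order given by $\sg$, hence invertible over $\field$, so the former set is also a basis.

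The main obstacle I anticipate is bookkeeping the diagonal shift correctly: verifying that multiplying $[\diag(\la)]_1$ by the product $\eap(\cdots)\faf$ lands the off-diagonal part at $A$ with diagonal exactly $\la-\bssg(A)$ (and that this $\bssg(A)$ matches $\sum_{j<i}(a_{i,j}+a_{j,i})$) requires carefully iterating the formulas \ref{formula1} and \ref{formula2} — first building up $\Eap$ column by column, tracking how each $[k\afE_{i,j}+\diag(\cdots)]_1$ decreases the $j$-th diagonal coordinate by $k$ and increases the $i$-th by $k$, then doing the same for $\Faf$ — and checking that the leading ("all $\dt$ concentrated") term survives with coefficient $1$ in $\field$ (which it does, as the binomial coefficients there are $\big({a_{i,t}\atop 0}\big)=1$ or $\big({0\atop 0}\big)=1$). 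Once the leading-term computation is pinned down, everything else is the routine unitriangularity argument; the rest of the terms being controlled follows formally from \ref{tri1} and the lower-order structure of \ref{formula1}, \ref{formula2}, exactly parallel to the proofs of \ref{formula in Knmbz}(2) and \ref{the property of K}(1).
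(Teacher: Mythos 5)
Your proposal is correct and follows the same route as the paper, which simply records this lemma as a consequence of Corollary \ref{tri1} (applied through $\xrk$, together with the observation that $\eap$ and $\faf$ shift weights by fixed amounts, so that inserting $[\diag(\la)]_1$ in the middle amounts to right-multiplying $\eap\faf=\xrk(\Eap\Faf)$ by $[\diag(\la-\bssg^-)]_1$ with $\bssg^-=\sum_{(i,j)\in\sL^-}a_{i,j}(\afbse_i-\afbse_j)$, and $\bssg^-+\co(A)=\bssg(A)$). One small slip: the surviving binomial coefficient in the leading term at $t=j$ is $\binom{k}{k}=1$ (because the $(i,j)$-entry of the running product is $0$ at that inductive step), not $\binom{0}{0}$; this does not affect the argument.
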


\begin{Prop}
Each of the following set forms a $\field$-basis for $\Unkhr:$
\begin{itemize}
\item[(1)]
$\fB_{h,r}=\{A\{\la,r\}\mid A\in\afThnpmh,\,\la\in\afmbnnh,\,\bar\la\in\ol{\afLa(n,r-\sg(A))}\};$
\item[(2)]
$\fP_{h,r}=\{[\![A+\diag(\bar\la),r]\!]_1\mid A\in\afThnpmh,\,\bar\la\in\ol{\afLa(n,r-\sg(A))}\};$
\item[(3)]
$\fM_{h,r}=\{\eap[\![\diag(\bar\la),r]\!]_1\faf\mid A\in\afThnpmh,\,\la\geq\bssg(A),\,\la\in\afLanr\}$.
\end{itemize}
\end{Prop}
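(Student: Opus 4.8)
The plan is to reduce everything to statements in $\afSrk$ via the surjection $\xrk\colon\Unk\tra\afSrk$ of \eqref{xrk}, using the $\field$-basis $\{A\{\la\}\mid A\in\afThnpmh,\,\la\in\afmbnnh\}$ of $\Unkh$ from \ref{basis of Unkh}, the ``binomial reduction'' \ref{identity}(1), and the triangular relation \ref{tri Schur}. Since $\xrk(A\{\la\})=A\{\la,r\}$ by \eqref{xik(Ala)}, the algebra $\Unkhr=\xrk(\Unkh)$ is spanned over $\field$ by $\{A\{\la,r\}\mid A\in\afThnpmh,\,\la\in\afmbnnh\}$. By definition $A\{\la,r\}=\sum_{\mu\in\afLa(n,r-\sg(A))}\big({\mu\atop\la}\big)[A+\diag(\mu)]_1$, which vanishes if $\sg(A)>r$; when $\sg(A)\le r$, \ref{identity}(1) shows that, because $0\le\la_i<p^h$, the scalar $\big({\mu\atop\la}\big)\in\field$ depends only on $\bar\mu\in\afmbznh$, whence
\[A\{\la,r\}=\sum_{\bar\mu\in\ol{\afLa(n,r-\sg(A))}_{p^h}}\bigg({\mu\atop\la}\bigg)[\![A+\diag(\bar\mu),r]\!]_1,\]
where inside the binomial $\mu$ denotes the representative of $\bar\mu$ in $\afmbnnh$. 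First I would record that $\fP_{h,r}$ is $\field$-linearly independent: in $[\![A+\diag(\bar\la),r]\!]_1=\sum_{\mu\in\afLa(n,r-\sg(A)),\,\bar\mu=\bar\la}[A+\diag(\mu)]_1$ the occurring standard basis vectors $[A+\diag(\mu)]_1$ of $\afSrk$ determine both $A$ (the off-diagonal part) and $\bar\la$ (the reduction of the diagonal part), so distinct members of $\fP_{h,r}$ have non-empty, pairwise disjoint supports in the basis $\{[C]_1\mid C\in\afThnr\}$, and a linear dependence is impossible over any $\field$.

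Next, fix $A\in\afThnpmh$ with $\sg(A)\le r$. The map $\la\mapsto\bar\la$ is a bijection from $\{\la\in\afmbnnh\mid\bar\la\in\ol{\afLa(n,r-\sg(A))}_{p^h}\}$ onto $\ol{\afLa(n,r-\sg(A))}_{p^h}$ (inverse: take the representative in $\afmbnnh$), so the index set of $\fB_{h,r}$ is in bijection, $A$ by $A$, with that of $\fP_{h,r}$. Moreover the square matrix $\big(\big({\mu\atop\la}\big)\big)$ appearing in the displayed formula is unitriangular for the partial order $\le$ on $\afmbnn$ restricted to these representatives: $\big({\mu\atop\mu}\big)=1$ and $\big({\mu\atop\la}\big)=0$ unless $\la\le\mu$ (exactly as in the proof of \ref{vi}), hence invertible over $\field$. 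Therefore, for each fixed $A$, the $\field$-spans of $\{A\{\la,r\}\mid\bar\la\in\ol{\afLa(n,r-\sg(A))}_{p^h}\}$ and of $\{[\![A+\diag(\bar\mu),r]\!]_1\mid\bar\mu\in\ol{\afLa(n,r-\sg(A))}_{p^h}\}$ coincide, and one family is an $\field$-basis of that common span precisely when the other is; the same invertibility shows $A\{\la,r\}$ lies in this span for \emph{every} $\la\in\afmbnnh$. Since standard basis vectors with different off-diagonal parts are distinct, the sum over $A\in\afThnpmh$ of these spans is direct, giving $\Unkhr=\spann_\field\fB_{h,r}=\spann_\field\fP_{h,r}$; combined with the linear independence of $\fP_{h,r}$ from the first paragraph and the $A$-by-$A$ cardinality comparison, both $\fP_{h,r}$ and $\fB_{h,r}$ are $\field$-bases of $\Unkhr$, proving (1) and (2).

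For (3), I would first check $\eap[\![\diag(\bar\la),r]\!]_1\faf\in\Unkhr$: one has $\eap=\xrk(\Eap)$, $\faf=\xrk(\Faf)$ with $\Eap,\Faf\in\Unkh$ (they are products of the generators $u^{\pm}_{a\afE_{i,j},1}$ with $a<p^h$), while $[\![\diag(\bar\la),r]\!]_1\in\Unkhr$ for $\la\in\afLanr$ by the case $A=0$ of the previous paragraph (it lies in the $\field$-span of the $0\{\mu,r\}=\xrk\big(\big({H\atop\mu}\big)\big)$), and $\Unkhr$ is a subalgebra. Next, using the periodicity identity $\sum_{1\le i\le n}\sg_i(A)=\sg(A)$, so that $\la-\bssg(A)\in\afLa(n,r-\sg(A))$ whenever $\la\in\afLanr$ and $\la\ge\bssg(A)$, I would apply \ref{tri Schur} termwise and group over residues to get
\[\eap[\![\diag(\bar\la),r]\!]_1\faf=[\![A+\diag(\ol{\la-\bssg(A)}),r]\!]_1+f,\]
where $f$ is an $\field$-combination of standard basis vectors $[B+\diag(\nu)]_1$ with $\sg(B)<\sg(A)$; since the whole left side lies in $\spann_\field\fP_{h,r}$ and members of $\fP_{h,r}$ with a fixed value of $\sg$ on the off-diagonal part are supported exactly on such vectors, $f$ is in fact an $\field$-combination of members $[\![B+\diag(\bar\nu),r]\!]_1$ of $\fP_{h,r}$ with $\sg(B)<\sg(A)$. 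Finally $(A,\la)\mapsto(A,\ol{\la-\bssg(A)})$ maps the index set of $\fM_{h,r}$ onto that of $\fP_{h,r}$ (given $\bar\nu\in\ol{\afLa(n,r-\sg(A))}_{p^h}$, pick $\nu_0\in\afLa(n,r-\sg(A))$ above it and take $\la=\nu_0+\bssg(A)\in\afLanr$, which satisfies $\la\ge\bssg(A)$), so the transition matrix between $\fP_{h,r}$ and $\fM_{h,r}$ is square and unitriangular with respect to $\sg(A)$; hence $\fM_{h,r}$ is an $\field$-basis of $\Unkhr$ as well.

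The substantive part of the argument is entirely combinatorial: matching up the index sets $\{\la\in\afmbnnh\mid\bar\la\in\ol{\afLa(n,r-\sg(A))}_{p^h}\}$, $\ol{\afLa(n,r-\sg(A))}_{p^h}$ and the shifted set $\{\ol{\la-\bssg(A)}\}$ of part (3), and verifying that the transition matrices occurring are square and unitriangular so that coincidences of spans upgrade to assertions about $\field$-bases. The only non-formal inputs beyond \ref{basis of Unkh}, \ref{identity}, \ref{vi}, \ref{tri Schur} and the surjectivity of $\xrk$ are the disjoint-support observation for $\fP_{h,r}$ and the periodicity identity $\sum_i\sg_i(A)=\sg(A)$, which ensures the shift in (3) stays inside $\afThnr$; I expect the bookkeeping around these, rather than any conceptual difficulty, to be where the proof needs care.
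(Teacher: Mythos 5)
Your proof is correct and follows essentially the same route as the paper: apply $\xrk$ to the basis $\fB_h$ of $\Unkh$, use \ref{identity}(1) to rewrite $A\{\la,r\}$ in terms of $[\![A+\diag(\bar\mu),r]\!]_1$, observe the unitriangular transition between $\fB_{h,r}$ and $\fP_{h,r}$, and then invoke \ref{tri Schur} for $\fM_{h,r}$. The paper's proof is more terse and implicitly relies on the linear independence of $\fP_{h,r}$ and the index-set bijections; you make these explicit with the disjoint-support observation and the verification that $\la\mapsto\ol{\la-\bssg(A)}$ is a bijection onto $\ol{\afLa(n,r-\sg(A))}_{p^h}$, which is exactly the bookkeeping the argument needs.
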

\begin{proof}
By \ref{identity}, we have
\[
\xrk(A\{\la\})=A\{\la,r\}=\sum_{\bar\mu\in\ol{\afLa(n,r-\sg(A))}}
\bigg({\mu\atop\la}\bigg)[\![A+\diag(\bar\mu),r]\!]_1.
\]
for $A\in\afThnpmh$ and $\la\in\afmbnnh$.
This together with  \ref{basis of Unkh} implies that $$\Unkhr=\spann_\field\{A\{\la,r\})\mid A\in\afThnpmh,\,\la\in\afmbnnh\}\han\spann_\field\fP_{h,r}.$$
For $A\in\afThnpmh$ let $\sX_A=\{\la\in\afmbnnh\mid\bar\la\in\ol{\afLa(n,r-\sg(A))}\}$.
Then for $A\in\afThnpmh$ and $\la\in\sX_A$, we have
\[
A\{\la,r\}=[\![A+\diag(\bar\la),r]\!]_1+
\sum_{\mu\in\sX_A,\,\mu>\la}\bigg({\mu\atop\la}\bigg)[\![A+\diag(\bar\mu),r]\!]_1.
\]
It follows that $\spann_\field\fP_{h,r}=\spann_\field\fB_{h,r}\han\Unkhr$. So each of  the sets $\fP_{h,r}$, $\fB_{h,r}$ is a $\field$-basis for $\Unkhr$.

In addition, by \ref{tri Schur} we have
$$\eap [\![\diag(\bar\la),r]\!]_1\faf=[\![A+\diag(\ol{\la-\bssg(A)}),r]\!]_1+f$$
for $A\in\afThnpmh$ and $\bar\la\in\ol{\afLanr}$,
where $f$ is a $\field$-linear combination of $[B+\diag(\mu)]_1$ with
$B\in\afThnpm$, $\mu\in\afLa(n,r-\sg(B))$ and $\sg(B)<\sg(A)$.
Since  $\Unkhr=\spann_\field\fP_{h,r}$ we conclude that $f$ must be a $\field$-linear combination of $[\![B+\diag(\bar\nu)]\!]_1$
$B\in\afThnpmh$, $\bar\nu\in\ol{\afLa(n,r-\sg(B))}$ and $\sg(B)<\sg(A)$.
Consequently, the set $\fM_{h,r}$ forms a $\field$-basis for $\Unkhr$.
\end{proof}

For $h\geq 1$ let $\snkh$ be the $\field$-subalgebra of $\Unk$ generated by $\Unkh$ and $\big({H_i\atop t}\big)$ for $1\leq i\leq n$ and $t\in\mbn$.

\begin{Lem}\label{snkh}
Each of the following set forms a $\field$-basis for $\snkh:$
\begin{itemize}
\item[(1)]
$\{\Eap\big({H\atop\la}\big)
\Faf\mid A\in\afThnpmh,\,\la\in\afmbnn\};$
\item[(2)]
$\{A\{\la\}\mid A\in\afThnpmh,\,\la\in\afmbnn\}.$
\end{itemize}
\end{Lem}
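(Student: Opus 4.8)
The plan is to follow the proof of \ref{basis of Unkh} almost word for word, the only change being that the Cartan part $\big({H\atop\la}\big)$ is now allowed to have $\la$ range over all of $\afmbnn$ instead of only over $\afmbnnh$. Write $W_h=\spann_\field\{A\{\la\}\mid A\in\afThnpmh,\ \la\in\afmbnn\}$, the $\field$-span of the set in part (2), and let $N$ denote the $\field$-span of the set in part (1). Since $\afThnpmh\sse\afThnpm$, the set in (2) is a subset of the basis $\fB$ of $\Unk$ from \ref{basis2 for afuglz}(2) and the set in (1) is a subset of the basis $\fM$ from \ref{basis2 for afuglz}(1); hence both sets are linearly independent over $\field$, and it remains only to prove $\snkh=W_h=N$.

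First I would show that $W_h$ is closed under left multiplication by a set of algebra generators of $\snkh$. Recall that $\Unkh$ is generated by $u^+_{k\afE_{i,j},1}$, $u^-_{k\afE_{i,j},1}$ and $\big({H_i\atop k}\big)$ for $0\le k<p^h$, $i<j$, and that by \eqref{th+uA} one has $u^+_{A,1}=A\{\bfl\}$ and $u^-_{A,1}=({}^t\!A)\{\bfl\}$; hence $\snkh$ is generated by the elements $(k\afE_{i,j})\{\bfl\}$ with $0\le k<p^h$ and $i\ne j$, together with $\big({H_i\atop t}\big)=\big({H\atop t\afbse_i}\big)$ for all $t\in\mbn$ and $1\le i\le n$. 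For a generator of the first kind, \ref{fundamental formula} expresses $(k\afE_{i,j})\{\bfl\}A\{\la\}$ (for $A\in\afThnpmh$, $\la\in\afmbnn$) as a $\field$-combination of terms $a(\al,\dt)A^{(\al)}\{\al_i\afbse_i+\dt\}$; exactly as in the proof of \ref{property of Vh}, if $A^{(\al)}\notin\afThnpmh$ then for some $t\ne i$ one has $a_{i,t}+\al_t-\dt_{\bar j,\bar i}\al_{t+j-i}\ge p^h$ with $0\le a_{i,t}+\al_t-\dt_{\bar j,\bar i}\al_{t+j-i}-p^h<\al_t\le k<p^h$, so by \ref{identity} the binomial factor $\big({a_{i,t}+\al_t-\dt_{\bar j,\bar i}\al_{t+j-i}\atop\al_t}\big)$, hence $a(\al,\dt)$, vanishes in $\field$; note that here, unlike in \ref{property of Vh}, no bound on $\al_i\afbse_i+\dt$ is needed, since $W_h$ imposes no constraint on the Cartan part. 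For a generator of the second kind, \ref{0mu Ala} writes $\big({H\atop t\afbse_i}\big)A\{\la\}$ as a $\field$-combination of elements $A\{\la+\dt\}$ with $\dt\in\afmbnn$, all of which lie in $W_h$. Since $1=0\{\bfl\}\in W_h$, it follows that $\snkh=\snkh\cdot1\sse\snkh\cdot W_h\sse W_h$.

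Next I would prove $W_h\sse N\sse\snkh$. For $A\in\afThnpmh$ each factor $(a_{i,j}\afE_{i,j})\{\bfl\}$ of $\Eap$ and of $\Faf$ lies in $\Unkh$ (as $a_{i,j}<p^h$) and each $\big({H_i\atop\la_i}\big)$ lies in $\snkh$, so $\Eap\big({H\atop\la}\big)\Faf\in\snkh\sse W_h$ for all $A\in\afThnpmh$, $\la\in\afmbnn$; in particular $N\sse\snkh$. Applying \ref{tri for affine gln} gives $\Eap\big({H\atop\la}\big)\Faf=A\{\la\}+\sum_{\dt<\la}g_{A,\la}^{\dt}A\{\dt\}+\sum_{\sg(B)<\sg(A)}g_{A,\la}^{B,\dt}B\{\dt\}$ with $B$ a priori only in $\afThnpm$; but the left-hand side lies in $W_h$ and $\{A\{\la\}\}$ is a basis of $\Unk$ by \ref{basis2 for afuglz}, so comparing coefficients forces $g_{A,\la}^{B,\dt}=0$ unless $B\in\afThnpmh$. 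An outer induction on $\sg(A)$ and, for fixed $A$, an inner (well-founded) induction on $\la$ with respect to $\le$ then yields $A\{\la\}\in N$ for all $A\in\afThnpmh$, $\la\in\afmbnn$; the base case $\sg(A)=0$ is immediate since then $A=0$ and $0\{\la\}=\big({H\atop\la}\big)$ is itself a member of the set in (1). Hence $W_h\sse N\sse\snkh\sse W_h$, so all three coincide; together with the linear independence noted above, this shows that both sets in (1) and (2) are $\field$-bases of $\snkh$.

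The only step requiring real care is the passage from ``$B\in\afThnpm$'' to ``$B\in\afThnpmh$'' in the triangular identity of \ref{tri for affine gln}: this is not supplied by that proposition on its own and depends on having first established $\snkh\sse W_h$, i.e.\ on the multiplication-stability computation via \ref{fundamental formula}, \ref{identity} and \ref{0mu Ala}. Everything else is a routine transcription of the argument for \ref{basis of Unkh}.
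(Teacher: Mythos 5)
Your proof is correct and takes the same route the paper intends: the paper's own proof of this lemma just says "similar to the proof of \ref{basis of Unkh}", and your argument is a faithful adaptation of that proof, replacing the $\afmbnnh$-constraint on the Cartan part by $\afmbnn$ throughout, using \ref{fundamental formula}, \ref{identity} and \ref{0mu Ala} to get left-stability of $W_h$ under the generators of $\snkh$, and then using the triangular relation \ref{tri for affine gln} together with linear independence of the $A\{\la\}$ to run the induction. The one small point worth keeping explicit, as you do, is that the refinement ``$B\in\afThnpmh$'' in the triangular identity is not automatic from \ref{tri for affine gln} but is forced by first knowing $\snkh\sse W_h$.
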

\begin{proof}
The result can be proved in a way similar to the proof of \ref{basis of Unkh}.
\end{proof}

For $h\geq 1$ and $r\in\mbn$, let $\snkhr=\xrk(\snkh)$. The algebra $\snkhr$ is the affine analogue of infinitesimal Schur algebras introduced in \cite{DNP96}.

\begin{Prop}
Each of the following set forms a $\field$-basis for $\snkhr:$
\begin{itemize}
\item[(1)]
$\fP_{h,r}'=\{[A+\diag(\la)]_1\mid A\in\afThnpmh,\,\la\in{\afLa(n,r-\sg(A))}\};$
\item[(2)]
$\fM_{h,r}'=\{\eap[\diag(\la)]_1\faf\mid A\in\afThnpmh,\,\la\geq\bssg(A),\,\la\in\afLanr\}$.
\end{itemize}
\end{Prop}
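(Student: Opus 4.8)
The plan is to mirror the proof of the preceding Proposition (the little Schur algebra case), with \ref{snkh} replacing \ref{basis of Unkh}. The key point is that $\snkh$ contains the full divided powers $\binom{H_i}{t}$ for all $t\in\mbn$ (not only $t<p^h$), so no diagonal entry gets truncated modulo $p^h$ and the standard basis elements $[A+\diag(\la)]_1$ of $\afSrk$ appear directly rather than their reductions. Throughout I will use that, by the defining formula, $A\{\la,r\}=\sum_{\mu\in\afLa(n,r-\sg(A))}\binom{\mu}{\la}[A+\diag(\mu)]_1$, together with the following collapse: if $\la\in\afLa(n,r-\sg(A))$, then $\binom{\mu}{\la}\neq0$ in $\mbz$ forces $\mu\geq\la$ componentwise, which combined with $\sum_{1\leq i\leq n}\mu_i=\sum_{1\leq i\leq n}\la_i$ forces $\mu=\la$, so $A\{\la,r\}=[A+\diag(\la)]_1$ in $\afSrk$.

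For (1), I would first apply $\xrk$ to the $\field$-basis $\{A\{\la\}\mid A\in\afThnpmh,\,\la\in\afmbnn\}$ of $\snkh$ given by \ref{snkh}(2), obtaining $\snkhr=\spann_\field\{A\{\la,r\}\mid A\in\afThnpmh,\,\la\in\afmbnn\}$; the defining formula then shows $\snkhr\subseteq\spann_\field\fP_{h,r}'$. For the reverse inclusion I would use the collapse above: for $\la\in\afLa(n,r-\sg(A))$ one has $A\{\la,r\}=[A+\diag(\la)]_1\in\snkhr$, so $\fP_{h,r}'\subseteq\snkhr$ and hence $\snkhr=\spann_\field\fP_{h,r}'$. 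Linear independence of $\fP_{h,r}'$ is immediate: writing each $C\in\afThnr$ uniquely as $C=A+\diag(\mu)$ with $A$ the off-diagonal part of $C$ exhibits $\fP_{h,r}'$ as a subset of the standard $\field$-basis $\{[C]_1\mid C\in\afThnr\}$ of $\afSrk$. This proves (1).

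For (2), I would first observe that $\Eap$ (resp.\ $\Faf$) is a product of generators $u^+_{k\afE_{i,j},1}$ (resp.\ $u^-_{k\afE_{i,j},1}$) with $0\le k<p^h$, so $\Eap\in\Unkhp\subseteq\snkh$ and $\Faf\in\Unkhm\subseteq\snkh$, while $\binom{H}{\la}=0\{\la\}\in\snkh$ for every $\la\in\afmbnn$; applying $\xrk$ and using the collapse from (1) gives $\eap,\faf\in\snkhr$ and $[\diag(\la)]_1=\xrk(\binom{H}{\la})\in\snkhr$ for $\la\in\afLanr$, whence $\spann_\field\fM_{h,r}'\subseteq\snkhr$. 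Linear independence of $\fM_{h,r}'$ follows because it is a subset of the $\field$-basis of $\afSrk$ exhibited in \ref{tri Schur}. To see that $\fM_{h,r}'$ spans $\snkhr=\spann_\field\fP_{h,r}'$, it suffices to put each $[A+\diag(\mu)]_1$ with $A\in\afThnpmh$ and $\mu\in\afLa(n,r-\sg(A))$ into $\spann_\field\fM_{h,r}'$: setting $\la=\mu+\bssg(A)$, one has $\la\in\afLanr$ and $\la\geq\bssg(A)$, so \ref{tri Schur} gives $\eap[\diag(\la)]_1\faf=[A+\diag(\mu)]_1+f$ with $f$ an $\field$-combination of standard basis elements $[B+\diag(\nu)]_1$ having $B\in\afThnpm$ and $\sg(B)<\sg(A)$. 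Since the left-hand side lies in $\spann_\field\fP_{h,r}'$ and $[A+\diag(\mu)]_1\in\fP_{h,r}'$, linear independence of the standard basis forces $f$ to be a combination of such $[B+\diag(\nu)]_1$ with $B\in\afThnpmh$; an induction on $\sg(A)\in\{0,1,\dots,r\}$ (base case $A=0$, where $[\diag(\mu)]_1\in\fM_{h,r}'$) then yields $[A+\diag(\mu)]_1=\eap[\diag(\la)]_1\faf-f\in\spann_\field\fM_{h,r}'$. Hence $\fP_{h,r}'\subseteq\spann_\field\fM_{h,r}'$, so $\spann_\field\fM_{h,r}'=\snkhr$ and $\fM_{h,r}'$ is an $\field$-basis of $\snkhr$.

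I expect no serious obstacle here, since the argument runs in parallel with the little Schur algebra case; the only points needing a little care are the collapse $A\{\la,r\}=[A+\diag(\la)]_1$ for $\la\in\afLa(n,r-\sg(A))$, the sharpening of the error term $f$ to matrices in $\afThnpmh$ via linear independence of the standard basis of $\afSrk$, and the observation that the induction on $\sg(A)$ is well founded because $\sg(A)\le r$ for any $[A+\diag(\mu)]_1\in\afSrk$.
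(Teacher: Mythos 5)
Your proposal is correct and follows essentially the same route as the paper: for (1) you use Lemma 7.4(2) to write $\snkhr$ as the span of the $A\{\la,r\}$ with $A\in\afThnpmh$, observe from the defining formula that each such element lies in $\spann_\field\fP_{h,r}'$, and get the reverse inclusion from the collapse $A\{\la,r\}=[A+\diag(\la)]_1$ when $\la\in\afLa(n,r-\sg(A))$; for (2) you invoke Lemma 7.2. The only difference is that the paper dispatches part (2) with a one-line appeal to Lemma 7.2, whereas you spell out the induction on $\sg(A)$ and the sharpening of the error term $f$ to matrices in $\afThnpmh$ via linear independence of the standard basis — both of which are exactly the details the paper's terse ``Now using \ref{tri Schur}, we conclude'' is implicitly relying on.
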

\begin{proof}
From \ref{snkh} we see that $\snkhr=\spann_\field\{A\{\la,r\}\mid A\in\afThnpmh,\,\la\in\afmbnn\}\han\spann_\field\fP_{h,r}'$.
Furthermore, for $A\in\afThnpmh$ and $\la\in\afLa(n,r-\sg(A))$ we have
$[A+\diag(\la)]_1=A\{\la,r\}\in\snkhr$. Consequently, $\fP_{h,r}'$ forms a $\field$-basis for $\snkhr$. Now using \ref{tri Schur}, we conclude that $\fM_{h,r}'$ is also a $\field$-basis for $\snkhr$.
\end{proof}

\begin{Rem}
Let $\tts(n,r)_h$ be the $\field$-submodule of $\snkhr$ spanned by the elements $[A+\diag(\la)]_1$ for $A\in\Th^\pm(n)_h$ and $\la\in\afLa(n,r-\sg(A))$, where
$\Th^\pm(n)_h$ is the subset of $\afThnpmh$ consisting of all  $A\in\afThnpmh$ such that $a_{i,j}=0$ for $1\leq i\leq n$ and  $j\not\in[1,n]$. Then $\tts(n,r)_h$ is the infinitesimal Schur algebras introduced in \cite{DNP96}.
\end{Rem}

\end{document}